   \renewcommand{\footnote}[1]{
\textsuperscript{%ecriture en exposant
\addtocounter{footnote}{1}%incrementation du compteur
(\thefootnote)% impression au format "(compteur)"
}
\footnotetext{#1}% la note de bas de page
}
\theoremstyle{remark}
\newtheorem{defi}{Definition}
\newtheorem{remark}[defi]{Remark}%[chapter]
\theoremstyle{plain}% default%%%%%%%%%%%%%%%%%%%%%%%%%%
\newtheorem{thm}{Theorem}%[chapter]
\newtheorem{lem}[defi]{Lemma}
\newtheorem{prop}[defi]{Proposition}%[chapter]
\theoremstyle{Definition}
\def\v{\varepsilon}
\def\e{\varepsilon}
\def\O{\Omega}
\def\p{\partial}
\def\r{R}
\def\o{\omega}
\def\R{\mathbb{R}}
\def\C{\mathbb{C}}
\def\S{\mathbb{S}}
\def\deg{ {\rm deg}}
\def\n{\nabla}
\def\dist{{\rm dist}}
\def\tr{{\rm tr}}
\def\f{\varphi}
\def\weak{\rightharpoonup}
\def\N{\mathbb{N}}
\def\Z{\mathbb{Z}}
\def\J{\mathcal{J}}
\def\I{\mathcal{J}}
\def\1{\textrm{1\kern-0.25emI}}
\def\0{\textrm{0\kern-0.25emI}}
\def\dom{{\mathcal{D}}}
\def\K{\mathcal{K}}
\def\A{\mathcal{A}}
\def\Ar{\mathbb{A}}
\def\rmin{\rho_{\rm min}}
\DeclareMathOperator{\dive}{div}
\DeclareMathOperator{\capa}{cap}
\DeclareMathOperator{\tra}{tr}
\title[Existence and non existence results for G-L energy] {Existence and non existence results for minimizers of the Ginzburg-Landau energy with prescribed degrees }
\author{Mickaël {\sc Dos Santos} \and R\'emy {\sc Rodiac}}
\address{D\'epartement de Math\'ematiques, Universit\'e Paris-Est-Cr\'eteil, 61 avenue du G\'en\'eral de Gaulle, 94010 Cr\'eteil Cedex, France}
\email{mickael.dos-santos@u-pec.fr, \ remy.rodiac@u-pec.fr}
\date{}
\begin{document}

\begin{abstract}
Let $\dom =\O\setminus\overline{\o} \subset \R^2$ be a smooth annular type domain. We consider the simplified Ginzburg-Landau energy $E_\e(u)=\frac{1}{2}\int_\dom |\nabla u|^2 +\frac{1}{4\e^2}\int_\dom (1-|u|^2)^2$, where $u: \dom \rightarrow \C$, and look for minimizers of $E_\e$  with prescribed degrees $\deg(u,\partial \O)=p$, $\deg(u,\partial \o)=q$ on the boundaries of the domain. For large $\e$  and for balanced degrees ({\it i.e.}, $p=q$), we obtain existence of minimizers for {\it thin} domain. We also prove non-existence of minimizers of $E_\e$, for  large $\e$, in the case $p\neq q$, $pq>0$ and $\dom$ is a circular annulus with large capacity (corresponding to "thin" annulus). Our approach relies on similar results obtained for the Dirichlet energy $E_\infty(u)=\frac{1}{2}\int_\dom|\nabla u|^2$, the existence result obtained by Berlyand and Golovaty  and on a technique developed by Misiats.
\end{abstract}
\keywords{Ginzburg-Landau energy, prescribed degrees, lack of compactness}
% \PACS{PACS code1 \and PACS code2 \and more}
\subjclass[2010]{Primary 35J50, Secondary 35J66}

\maketitle
%\tableofcontents
\section{Introduction and main results}

We fix $\dom=\O\setminus\overline{\o}\subset\R^2$ a smooth annular type domain: $\O$ and $\o$ are smooth and 
bounded simply connected open sets s.t.  $\overline{\o}\subset\O\subset\R^2$. In this article, some results 
are specific to the case where $\dom$ is a circular annulus. In order to underline this specificity, when needed, we use the notation $\Ar=B(0,1)\setminus\overline{B(0,\r)}$ (with $\r\in]0,1[$) instead of $\dom$. 

\noindent We are interested in the existence or the non-existence of { global} minimizers of the Ginzburg-Landau type energy
\[
E_\v(u)=\frac{1}{2}\int_\dom|\n u|^2+\dfrac{1}{2\v^2}(1-|u|^2)^2
\] 
in the topological sectors of $\J:=\{u\in H^1(\dom,\C)\,|\,\tr_{\p\dom}(u)\in H^{1/2}(\p\dom,\S^1)\}$ for large values of $\v>1$. Here, $\tr_{\p\dom}$ stands for the {\it trace operator} on $\p\dom$ and $\S^1=\{x\in\C\,|\,|x|=1\}$.
We consider also the Dirichlet energy
\[
E_\infty(u)=\frac{1}{2}\int_\dom|\n u|^2,\,u\in\J.
\]
%This energy may be interpreted as the limiting energy in term of Gamma-Convergence for $(E_\v)_{\v>1}$ when $\v\to\infty$. \\

\noindent For $\Gamma\in\{\p\O,\p\o\}$ and for $u\in  \J$ we let
\[
\deg_\Gamma (u)=\frac{1}{2\pi}\int_\Gamma{u\wedge\p_\tau u\,{\rm d}\tau}.%\in\Z.
\]
Here:
\begin{enumerate}[$\bullet$]
\item Each Jordan curve $\Gamma$ is directly (counterclockwise) oriented.
\item We let $\nu$ be the outward normal to $\O$ if $\Gamma=\p\O$ or $\o$ if $\Gamma=\p\o$, and $\tau=\nu^\bot$ is the tangential vector of $\Gamma$.
\item The differential operator $\p_\tau=\tau\cdot\n$ is the tangential derivative  and $"\cdot"$ stands for the usual scalar product in $\R^2$. 
We use also the standard notation $"\p_\nu"$ for the normal derivative $\p_\nu=\nu\cdot\n$.
\item The vectorial operator $"\wedge"$ stands for the vectorial product in $\C$, it is defined by $(z_1+\imath z_2)\wedge(w_1+\imath w_2):=z_1w_2-z_2w_1,\:z_1,z_2,w_1,w_2\in\R$.  
\item It is well known that $\deg_\Gamma (u)$ is an integer see \cite{BeMi1} (the introduction) or \cite{B1}.
\item The integral over $\Gamma$ should be understood using the duality between $H^{1/2}(\Gamma)$ and $H^{-1/2}(\Gamma)$ (see, \emph{e.g.}, \cite{BeMi1} Definition 1).
\item For $u\in\J$, we write $\deg(u)=(\deg_{\p\O} (u),\deg_{\p\o} (u))$.
\end{enumerate}

\noindent For $P=( p,q)\in\Z^2$, we are interested in the minimization of $E_\v$ for large $\v>1$ in 
\[
\J_{P}=\J_{p,q}:=\left\{u\in\J\,|\,\deg(u)=({p},q)\right\}.
\] 
For $\v\in]0,\infty]$ and $P=(p,q)\in\Z^2$, we denote
\[
m_\v(P)=m_\e(p,q)=\inf_{\J_P}E_\v.
\]
It is well known that the $\J_P$'s are the connected component of $\J$. They are open and closed for the strong topology induced by the $H^1$-norm. Hence if a minimizer of $E_\e$ in $\I_{p,q}$ exists for some $(p,q)\in \mathbb{Z}^2$ it satisfies the following Euler-Lagrange equations:

\begin{equation}\label{semi-stiff 1}
\left\{
\begin{array}{rclll}
-\Delta u & =&\dfrac{1}{\e^2}u(1-|u|^2) & \text{in} \ A \\
|u|&=&1 & \text{on }\partial  A \\
u\wedge \partial_\nu u & =& 0 &  \text{on} \ \partial A
\end{array}.
\right.
\end{equation}

These equations are obtained by making variations of the form $u_t=u+t\varphi$ for $t\in\R,\varphi \in C^\infty_0(\dom,\R^2)$ and $u_t=ue^{it\psi}$ for $t\in\R,\psi \in C^\infty(\overline{\dom},\R)$ (see Appendix C in \cite{BeMi0}). 

However the sets $\I_P$ are not closed with respect to the weak convergence in $H^1$ (see Introduction in \cite{BeMi0}). 
This fact implies that, in general, the minimization problem $m_\v(P)$ is not easy to handle since the direct minimization method fails. Namely in some cases $m_\v(P)$ is not attained. In contrast, for some other configurations, all minimizing sequence converges in $H^1$-norm. Such questions are central in this article.
\begin{remark}
It is obvious that for $p=q=0$ and $\v\in]0,\infty]$, $m_\v(0,0)$ is attained and the minimizers are the constants of modulus $1$. Thus we can focus on the case $(p,q)\neq(0,0)$.
\end{remark}

In this article we obtained existence and non existence results for {\it thin} domains.

\begin{defi}\label{DefConfRatio}We fix a conformal mapping 
\[
\Phi:\Ar=\{x\in\R^2\,|\,\r_\dom<|x|<1\}\to\dom.
\]
\begin{itemize}\item The number $\r_\dom\in]0,1[$ corresponds to the {\it conformal ratio} of $\dom$. 
\item When $\r_\dom$ is "close to" $1$, the domain $\dom$ is {\it thin}. When $\r_\dom$ is "close to" $0$, the domain $\dom$ is {\it thick}.
\item In this context the well known {\it $H^1$-capacity of $\dom$} is  $\capa(\dom)=-\dfrac{2\pi}{\ln\r_\dom}$.
\end{itemize}
\end{defi}

This article essentially contains two theorems. The first one is an existence result and, roughly speaking, states that for all $p \in \mathbb{N}^*$, under an hypothesis \eqref{ExistKeyHyp} (which expresses  that the annulus is  thin) and if $\e$ is sufficiently large then $m_\e(p,p)$ is attained.

\begin{thm}\label{PropEXist1}
Let $\dom\subset\R^2$ be an annular type domain and let $p\in \mathbb{N}^*$. If
\begin{equation}\tag{H}\label{ExistKeyHyp}
m_\infty(p,p)< m_\infty(p-1,p-1)+2\pi
\end{equation} 
then there exists $\e_p>0$ s.t. if $\e_p< \e \leq +\infty$ then minimizing sequences for $m_\e(p,p)$ are compact (for the $H^1$-norm). In particular $m_\e(p,p)$  is attained. 

%Furthermore there exists an other critical value $C_p'\geq C_p$ s.t. if $\capa (\dom) > C_p'$ the minimizer of $E_\infty$ is unique (up to a phase) and vortexless. \\
For $(u_\v)_{\v>\v_p}\subset\J_{p,p}$ a sequence of minimizer there is $u_\infty\in\J_{p,p}$ a minimizer for $m_\infty(p,p)$ s.t., up to a subsequence, we have: %If we denote this minimizer by $u_\infty^p$, there is a family $(\alpha_\e)\subset \mathbb{S}^1$ s.t. for all $l\in \mathbb{N}$ and $\beta \in ]0,1[$
$$u_\e \underset{\v\to\infty}{\rightarrow} u_\infty \ \text{in} \ C^{l}(\overline{\dom})\:\forall l\in\N. $$
%for every $u_\e^p$ minimizer of $E_\e$ in $\J_{p,p}$. In particular for large $\e$,  minimizers of $E_\e$ in $\J_{p,p}$ are vortexless.
\end{thm}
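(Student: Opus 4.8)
The plan is to run a concentration--compactness argument for the constrained minimization and to reduce the loss of compactness at finite $\e$ to the bubbling analysis of the Dirichlet energy $E_\infty$, for which the threshold $2\pi$ and the comparison with the class $\J_{p-1,p-1}$ are available (this is where I would invoke the Berlyand--Golovaty existence theory and the technique of Misiats). First I would record the elementary energy comparison. Since $E_\e$ is nonincreasing in $\e$ and $E_\infty\le E_\e$ pointwise, one has $m_\infty(p,p)\le m_\e(p,p)$ for every $\e$. For the reverse bound, take a minimizer $u_\infty$ of $E_\infty$ in $\J_{p,p}$, which exists under \eqref{ExistKeyHyp} by the Dirichlet theory; being componentwise harmonic with $|u_\infty|=1$ on $\p\dom$, the maximum principle gives $|u_\infty|\le 1$, whence $m_\e(p,p)\le E_\e(u_\infty)=m_\infty(p,p)+\frac{1}{2\e^2}\int_\dom(1-|u_\infty|^2)^2\le m_\infty(p,p)+C_p\e^{-2}$. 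Thus $m_\e(p,p)\to m_\infty(p,p)$ at rate $O(\e^{-2})$.

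Next, fix $\e$ large and let $(u_n)\subset\J_{p,p}$ be minimizing for $m_\e(p,p)$. The energy bound yields a uniform $H^1$ bound, so up to a subsequence $u_n\weak u_\e$ in $H^1$, $u_n\to u_\e$ in every $L^s(\dom)$ with $s<\infty$ by Rellich, and $\tr u_n\to\tr u_\e$ in $L^2(\p\dom)$; in particular $|u_\e|=1$ on $\p\dom$, so $u_\e\in\J$. Two facts are crucial here. The penalization term is \emph{continuous} along the sequence thanks to the compact embedding $H^1(\dom)\hookrightarrow L^4(\dom)$, so $\int_\dom(1-|u_n|^2)^2\to\int_\dom(1-|u_\e|^2)^2$; and expanding $|\n u_n|^2=|\n u_\e|^2+|\n(u_n-u_\e)|^2+2\,\n u_\e\cdot\n(u_n-u_\e)$ and using weak convergence gives the splitting $m_\e(p,p)=E_\e(u_\e)+D$, where the \emph{defect} $D:=\tfrac12\lim_n\int_\dom|\n(u_n-u_\e)|^2\ge0$ is a purely Dirichlet quantity. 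If $\deg(u_\e)=(p,p)$, then $E_\e(u_\e)\ge m_\e(p,p)$ forces $D=0$, i.e.\ $u_n\to u_\e$ strongly in $H^1$ and $u_\e$ is the desired minimizer; so everything reduces to excluding a degree drop.

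The heart of the matter, and the step I expect to be the main obstacle, is the \emph{quantization of the defect}. The only mechanism to lose degree in the weak limit is concentration of Dirichlet energy, and the claim is that each elementary bubble carries at least $2\pi$ while, for balanced degrees, it lowers the accessible class from $\J_{p,p}$ to $\J_{p-1,p-1}$. Making this precise---exactly as in the Dirichlet setting, which is legitimate here because $D$ is pure Dirichlet energy and the residual satisfies $E_\e(u_\e)\ge E_\infty(u_\e)\ge m_\infty(\deg(u_\e))$---should yield, whenever $\deg(u_\e)\neq(p,p)$, the lower bound $m_\e(p,p)=E_\e(u_\e)+D\ge m_\infty(p-1,p-1)+2\pi$. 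Comparing this with the upper bound $m_\e(p,p)\le m_\infty(p,p)+C_p\e^{-2}$ and invoking \eqref{ExistKeyHyp}, set $\delta:=m_\infty(p-1,p-1)+2\pi-m_\infty(p,p)>0$; then $\delta\le C_p\e^{-2}$, which fails as soon as $\e>\e_p:=\sqrt{C_p/\delta}$. Hence for $\e>\e_p$ no degree is lost, $u_\e\in\J_{p,p}$, and compactness follows as above. The delicate point is the verification of the $2\pi$-quantization together with the identification of $(p-1,p-1)$ as the cheapest accessible class, which requires the precise bubbling description of minimizing sequences for $E_\infty$.

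Finally, for the convergence $u_\e\to u_\infty$ as $\e\to\infty$: each minimizer $u_\e$ solves \eqref{semi-stiff 1} and satisfies $|u_\e|\le1$ by the maximum principle applied to $|u_\e|^2$, so the right-hand side $\e^{-2}u_\e(1-|u_\e|^2)$ is bounded in $L^\infty$ uniformly in $\e>\e_p$. Elliptic regularity for the system with the semi-stiff boundary conditions then provides uniform $C^l(\overline{\dom})$ bounds for every $l$; by Arzel\`a--Ascoli one extracts a subsequence converging in every $C^l$ to some $u_\infty$, which is harmonic (the right-hand side tends to $0$), satisfies the limiting boundary conditions, and has $\deg(u_\infty)=(p,p)$ by $C^1$-continuity of the degree. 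Since $E_\infty(u_\infty)=\lim_\e E_\e(u_\e)=\lim_\e m_\e(p,p)=m_\infty(p,p)$, the limit $u_\infty$ minimizes $E_\infty$ in $\J_{p,p}$, which completes the proof.
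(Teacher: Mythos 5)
Your overall architecture (weak limit of a minimizing sequence, defect splitting, exclusion of degree loss via \eqref{ExistKeyHyp}, then an upper bound $m_\e(p,p)\le m_\infty(p,p)+C_p\e^{-2}$) is reasonable, but the step you yourself flag as ``the heart of the matter'' is where the proof genuinely breaks, and it cannot be repaired by the tools you invoke. The available quantization (the Price lemma, Lemma \ref{PriceLemma}) costs $\pi$ per unit of degree lost \emph{per boundary component}, not $2\pi$ per bubble, and at \emph{fixed finite} $\e$ nothing forces the degree to drop simultaneously on both components of $\p\dom$. Concretely, the weak limit $u_\e$ may a priori lie in $\J_{p-1,p}$ (or $\J_{p,p-1}$) with defect only $D\ge\pi$; then your claimed bound $m_\e(p,p)=E_\e(u_\e)+D\ge m_\infty(p-1,p-1)+2\pi$ does not follow from \eqref{ExistKeyHyp} and the elementary comparison $m_\e(P)\le m_\e(P')+\pi|P-P'|$ (Proposition \ref{Property1Meps}.2), which only yield $m_\infty(p,p-1)+\pi\ge m_\infty(p-1,p-1)$. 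To close the single-boundary-loss case you need a rigidity input for unbalanced classes: either the Hauswirth--Rodiac identity $m_\infty(p,q)=m_\infty(q,q)+2\pi(p-q)$ for $p>q>0$ (Proposition \ref{PropEpsInftHR}), which you never invoke, or the paper's route, which is structurally different: it first shows (Lemma \ref{LFond1}, combining the Price lemma with the \emph{sharp} bubbling of Proposition \ref{ShaprBBL} to prevent degree increase) that the weak limit $u_\e$ of a non-compact minimizing sequence minimizes $m_\e(P_1)$ for some $P_1\in\A_{(p,p)}\setminus\{(p,p)\}$, possibly unbalanced; it then performs a \emph{second} limit $\e=\e_k\to\infty$, extracts $u_{\e}\weak u_\infty$ minimizing $m_\infty(P_2)$, and only at $\e=\infty$ uses Proposition \ref{PropEpsInftHR} to force $P_2=(p_2,p_2)$ balanced, after which exact bookkeeping of the $\pi$-costs through both weak limits plus \eqref{ExistKeyHyp} gives the contradiction $p-p_2>p-p_2$. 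Without one of these two ingredients your key inequality is an assertion, not a proof. (A related smaller point: your upper bound $m_\e(p,p)\le m_\infty(p,p)+C_p\e^{-2}$ uses the existence of a Dirichlet minimizer under \eqref{ExistKeyHyp} alone, which is not off-the-shelf ``Dirichlet theory'' but is itself Proposition \ref{compactinfty} of the paper, proved by the same Lemma \ref{LFond1}.)

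The final part also has a real gap: from $|u_\e|\le1$ you correctly get that $\e^{-2}u_\e(1-|u_\e|^2)$ is uniformly bounded, hence uniform \emph{interior} $C^l_{\rm loc}$ estimates, but ``elliptic regularity for the system with the semi-stiff boundary conditions'' providing uniform $C^l(\overline\dom)$ bounds is not a standard package. The boundary conditions $|u_\e|=1$, $u_\e\wedge\p_\nu u_\e=0$ are nonlinear, and the uniform up-to-the-boundary control is exactly what the paper must work for: it first proves $|u_\e|$ stays uniformly close to $1$ near $\p\dom$ (via the Brezis--Nirenberg VMO estimate, Lemma \ref{VMO}, and the elliptic estimate of Lemma \ref{ellipticestimate}), then writes a local lifting $u_\e=\rho_\e{\rm e}^{\imath(\f+\psi_\e)}$ near $\p\dom$, gets an $L^4$ bound on $\n\psi_\e$ by Schwarz reflection, and bootstraps. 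Your Arzel\`a--Ascoli and degree-continuity conclusions are fine \emph{given} those uniform boundary estimates, but asserting them is precisely skipping the hard half of that argument.
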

\begin{remark}
\begin{enumerate}
\item Since $\J_{-p,-p}=\{\overline{u}\,|\,u\in\J_{p,p}\}$ where $\overline{u}$ is the conjugate of $u$ and since $E_\v(\overline{u})=E_\v(u)$, it is easy to reformulate Theorem \ref{PropEXist1} for $p<0$.
\item The condition \eqref{ExistKeyHyp} is theoretical. We are able to prove that this condition holds true under the following condition of capacity of the domain. There exists $0<R_p<1$ s.t. if the conformal \textit{ratio}  $R_\dom$ satisfies $R_p<R_\dom<1$ then $\eqref{ExistKeyHyp}$ holds. Note that $\r_p$ is the same than in Theorem \ref{P-GolBer} below.

% We may replace hypothesis \eqref{ExistKeyHyp} by a less general condition (but more explicit) : there is $\r_p>0$ s.t. if the conformal {\it ratio} $\r_\dom$ satisfy $\r_p<\r_\dom<1$ then \eqref{ExistKeyHyp} holds. 
\item Note that for $1>\r_\dom>\r_p$ we have that the minimizers of $m_\infty(p,p)$ are vortexless. Consequently, for sufficiently large $\v$, the minimizers of $m_\v(p,p)$ are also vortexless. 
\end{enumerate}
\end{remark}

The previous theorem is an "extension" to general annular type domains of a previous result of Berlyand and Golovaty:
\begin{thm}[\cite{GB1}]\label{P-GolBer}
Let $p \in \mathbb{N}^*$ there exists a critical outer radius $0< \r_p<1$ s.t. for $\r_p<\r<1$, $m_\e(p,p)$ is attained by a unique (up to a phase) radially symmetric minimizer for all $0<\e < +\infty$.
\end{thm}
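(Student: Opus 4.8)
The plan is to exploit the rotational invariance of $\Ar$: the natural candidate in $\J_{p,p}$ is the radially symmetric map $u=f(r)\mathrm{e}^{\imath p\theta}$, where $f(\r)=f(1)=1$ is forced by $|u|=1$ on $\p\Ar$ and then $\deg_{\p\O}(u)=\deg_{\p\o}(u)=p$. First I would analyze the reduced one-dimensional functional obtained by inserting this ansatz into $E_\e$,
\[
F_\e(f)=\pi\int_\r^1\Bigl(f'^2+\frac{p^2 f^2}{r^2}\Bigr)r\,{\rm d}r+\frac{\pi}{2\e^2}\int_\r^1(1-f^2)^2\,r\,{\rm d}r ,
\]
over $\{f\in H^1((\r,1))\mid f(\r)=f(1)=1\}$. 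Existence of a minimizer $f_\e$ is immediate by the direct method (the functional is coercive and weakly lower semicontinuous, and the problem is one-dimensional). Replacing $f$ by $\min(|f|,1)$ lowers each of the three terms, so $0\le f_\e\le1$, and the strong maximum principle applied to the Euler--Lagrange equation $-\tfrac1r(rf')'+\tfrac{p^2}{r^2}f=\tfrac1{\e^2}f(1-f^2)$, the radial form of \eqref{semi-stiff 1}, yields $0<f_\e<1$ in $(\r,1)$; for $\e=\infty$ one obtains the explicit harmonic profile $f_\infty(r)=(r^p+\r^p r^{-p})/(1+\r^p)$. Uniqueness of $f_\e$ would follow from a comparison/convexity argument for this ODE. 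This gives the radial critical point $u_\e^{\rm rad}=f_\e(r)\mathrm{e}^{\imath p\theta}$ together with its phase family $\mathrm{e}^{\imath\alpha}u_\e^{\rm rad}$, and in particular $m_\e(p,p)\le F_\e(f_\e)$.

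The heart of the matter is to show that for $\r$ close to $1$ this radial competitor is the global minimizer and that minimizing sequences are compact. Thinness enters through the energy budget: the radial energy $F_\e(f_\e)$ is comparable to $\capa(\dom)^{-1}=-\ln\r/(2\pi)$, hence bounded by $\pi p^2\ln(1/\r)\to0$ as $\r\to1$ uniformly in $\e$ (take the competitor $f\equiv1$), whereas the only way a minimizing sequence for the degree problem can lose compactness is by letting a vortex of degree $\pm1$ escape to $\p\Ar$, at the fixed energy cost $2\pi$ that already appears in the hypothesis \eqref{ExistKeyHyp}. So I would first prove an a priori lower bound showing that any configuration carrying an interior vortex, or shedding a unit of degree to the boundary, costs at least $2\pi(1+o(1))$; comparing with $F_\e(f_\e)\to0$, this forces minimizing sequences to remain vortexless ($|u|\ge c>0$) once $\r$ is close enough to $1$, uniformly in $\e\in(0,\infty]$. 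The infimum is then attained and the minimizer solves \eqref{semi-stiff 1}.

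It remains to identify the minimizer. Being vortexless on the non--simply-connected $\Ar$, any minimizer lifts as $u=\rho_u\mathrm{e}^{\imath\phi}$ with $\phi$ multivalued of winding $p$, so that $|\n u|^2=|\n\rho_u|^2+\rho_u^2|\n\phi|^2$. Since $\Ar$ and $E_\e$ are rotation invariant, the rotates $u(r,\theta+s)$ are again minimizers; once uniqueness up to phase is known, this forces $u(r,\theta+s)=\mathrm{e}^{\imath\gamma(s)}u(r,\theta)$ and hence $u=f(r)\mathrm{e}^{\imath p\theta+\imath\alpha}$, which by the first step must coincide with $\mathrm{e}^{\imath\alpha}u_\e^{\rm rad}$. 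The main obstacle is exactly this uniqueness (equivalently, the radial symmetry of every minimizer) and the determination of the sharp threshold $\r_p$, both required uniformly in $\e\in(0,\infty]$: the coupling between the modulus $\rho_u$ and the phase $\phi$ through the boundary constraint $|u|=1$ obstructs a direct convexity argument, and the vortex-cost bound of the previous step must be made quantitative enough to pin down $\r_p$ independently of $\e$. A useful guide here is the explicit Dirichlet value $m_\infty(p,p)=2\pi p\tanh\!\bigl(\tfrac{p}{2}\ln(1/\r)\bigr)$ carried by the radial harmonic competitor $f_\infty$, which governs the borderline case $\e=\infty$ in the comparison \eqref{ExistKeyHyp}.
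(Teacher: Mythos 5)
This statement is one the paper cites from \cite{GB1} without proof, so your proposal can only be judged on its own merits. The existence half of your argument is sound in outline: the competitor $f\equiv 1$ gives $m_\e(p,p)\le \pi p^2\ln(1/\r)$ uniformly in $\e$ (the potential term vanishes for $f\equiv1$), and once this bound drops below the minimal cost of losing a degree, the Price Lemma (Lemma \ref{PriceLemma}) forces compactness of minimizing sequences. Two caveats, though. First, the relevant cost is $\pi$ per unit of degree lost on a boundary component, not the $2\pi$ you quote; the $2\pi$ in \eqref{ExistKeyHyp} arises only because, by Proposition \ref{PropEpsInftHR}, a weak limit that minimizes must have balanced degrees, so both boundary degrees drop together. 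Second, this small-energy route produces \emph{some} threshold close to $1$, which is far more restrictive than the Berlyand--Golovaty radius $\r_p$: in \cite{GB1} that radius is defined through a quantitative lower bound, uniform in $\v$, on $\Bigl[\bigl(\tfrac1\r-1\bigr)\int_\r^1 t\,\rho_{\v,p}(t)^{-2}\,{\rm d}t\Bigr]^{-1}$ (see Step 1 of the proof of Proposition \ref{CompaPropBG}), which your argument never recovers.

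The genuine gap is uniqueness and radial symmetry, which is the actual content of the theorem. Your argument for it is circular: you write that ``once uniqueness up to phase is known'', rotation invariance forces $u(r,\theta+s)=\mathrm{e}^{\imath\gamma(s)}u(r,\theta)$ and hence radiality --- but uniqueness up to phase is precisely what must be proven; rotation invariance of $E_\e$ and $\Ar$ by itself is perfectly compatible with a continuum of non-radial minimizers forming a rotation orbit. Even uniqueness of the one-dimensional profile $f_\e$ is only asserted (``would follow from a comparison/convexity argument''), and note the reduced functional is not convex in $f$ because of the $(1-f^2)^2$ term. You honestly flag this as ``the main obstacle'', but flagging it does not close it: what you have actually proven is that a minimizer exists and that a radial critical point exists, not that every minimizer is radial and unique up to phase. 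This is exactly the hard part; as the paper remarks in connection with \cite{FarMir}, ``the proof of uniqueness is a real challenge (existence is direct for $\delta<\pi$)''. Closing it requires the machinery of \cite{GB1} (or of \cite{FarMir}), not a symmetry shortcut.
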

\begin{defi}
In the previous theorem, the expression "{\it up to a phase}" means that if $u$ is a minimizer, then $\tilde u$ is a minimizer if and only if there exists $\alpha\in\S^1$ s.t. $\tilde u=\alpha u$. 
Another way to explain this expression is to say that two minimizers have pointwise same moduli and the difference of their phases is a constant.
\end{defi}

\begin{remark}
Theorem \ref{P-GolBer} may be easily extended to the case $\v=\infty$. {[see Step 2 in the proof of Proposition \ref{CompaPropBG}]}
\end{remark}
Although  Theorem \ref{PropEXist1} may be seen as an extension of Theorem \ref{P-GolBer}, the methods used in their proofs  are different. Condition  \eqref{ExistKeyHyp} allows to make arguments in the spirit of concentration-compactness phenomenon and bubbling analysis (see \textit{e.g.} \cite{Brezislackofcompactness}). See Section \ref{SectAnalCriti} for  a detailed comparison between both theorems. \\
Note that in \cite{FarMir} (Theorem 1.5), Farina and Mironescu have also extended Theorem \ref{P-GolBer}, to general annular type domains. They proved that there is some explicit universal constant $\delta\simeq 0.045$ such that if $m_\e(p,p)<\delta$ then the infimum is attained and the minimizer is unique (up to a phase). Then using $\mathbb{S}^1$-valued test functions, and the conformal invariance of the Dirichlet energy, they obtained that if the annular domain is very thin then the condition $m_\e(p,p)<\delta$ holds. Their condition on the thinness of the annular domain is more restrictive than ours, however they obtained a more precise result: uniqueness of minimizer (up to a phase). We want to emphasize that the proof of uniqueness  is a real challenge (existence is direct for $\delta<\pi$).\\

Our second theorem is a non-existence result specific to the symmetric case $\dom= \Ar =B(0,1)\setminus\overline{B(0,\r)}$ with $\r$ close to $1$.

\begin{thm}\label{ThmNonExist}
Let $p,q\in\N^*$ s.t.  $p\neq q$. Then there are $0<\r_{\min(p,q)}<1$ and $\v_{\min(p,q)}>1$ s.t. for $ \r_{\min(p,q)}<\r<1$, $\Ar =B(0,1)\setminus\overline{B(0,\r)}$ and $\v>\v_{\min(p,q)}$ we have $m_\v(p,q)$ is not attained.
\end{thm}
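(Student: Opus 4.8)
The plan is to combine the non-attainment of the \emph{Dirichlet} infimum on a thin annulus with a sharp energy splitting for $E_\v$, exploiting the topological fact that any map with unequal degrees on the two boundary circles must vanish somewhere inside $\Ar$. Assume without loss of generality $p>q\ge 1$; the defect $p-q$ has a well-defined sign precisely because $pq>0$. The first step is to identify the Dirichlet infimum. Using the rotational and conformal invariance of $E_\infty$ on $\Ar$ together with the Berlyand--Golovaty radial minimizer for the balanced problem, one shows that for $\r$ close to $1$
\[ m_\infty(p,q)=m_\infty(q,q)+\pi\,(p-q), \]
where $\pi\,(p-q)$ is the cost of expelling $p-q$ degree-one bubbles onto $\p B(0,1)$ (each half-plane bubble having Dirichlet energy $\pi$), and that this infimum is \emph{not} attained: every minimizing sequence concentrates the defect into bubbles on the outer circle, so its weak $H^1$-limit carries the balanced degrees $(q,q)$ and leaves $\J_{p,q}$.

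The second step transfers the upper bound to $E_\v$ for large $\v$. Since $E_\v\ge E_\infty$, one has $m_\v(p,q)\ge m_\infty(p,q)$. For a matching upper bound I would attach $p-q$ Blaschke-type bubbles, concentrating on $\p B(0,1)$, to the radial Berlyand--Golovaty minimizer of $m_\v(q,q)$, which is attained on a thin annulus by Theorem \ref{P-GolBer}. Each such bubble has Dirichlet energy $\pi$, and as it concentrates its potential $\tfrac{1}{2\v^2}\int(1-|\cdot|^2)^2$ tends to $0$, whence
\[ m_\v(p,q)\le m_\v(q,q)+\pi\,(p-q). \]
Largeness of $\v$ is what keeps these test maps essentially $\S^1$-valued and renders the Ginzburg--Landau problem a controlled perturbation of the Dirichlet one.

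The third step is the non-existence itself, argued by contradiction. Suppose $u_\v$ attains $m_\v(p,q)$; then $u_\v$ solves \eqref{semi-stiff 1}, and since $\deg_{\p\O}(u_\v)\ne\deg_{\p\o}(u_\v)$ the map $u_\v$ must vanish at some interior point of $\Ar$, for otherwise $u_\v/|u_\v|$ would be a genuine $\S^1$-valued map on $\overline{\Ar}$ and would have equal degrees on the two boundaries. Following Misiats' technique, I would establish the \emph{sharp} lower bound
\[ E_\v(u)\ge m_\v(q,q)+\pi\,(p-q)\qquad\text{for every }u\in\J_{p,q}, \]
obtained by localizing the energy around the interior zeros (each degree-one zero contributing at least $\pi$, the optimal energy of a degree-one bubble) and controlling the remaining energy by the balanced value $m_\v(q,q)$. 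The rigidity in the equality case forces the zeros onto $\p B(0,1)$, which is incompatible with $\deg_{\p\O}(u_\v)=p\ne q$; hence the inequality is strict for every admissible $u_\v$, so $m_\v(p,q)=m_\v(q,q)+\pi\,(p-q)$ is not attained.

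The main obstacle is this sharp lower bound and the analysis of its equality case: one must show that carrying the defect through a zero at positive distance from $\p B(0,1)$ costs \emph{strictly} more than the escaped-bubble value $\pi\,(p-q)$, i.e.\ that the reduced energy, viewed as a function of the zero's location, admits no interior minimizer and decreases monotonically toward the outer circle. This requires precise control of the interaction between the defect, the balanced background and the two boundary components; the thinness of $\Ar$ (that is, $\r$ close to $1$) and the largeness of $\v$ are exactly the hypotheses that make this monotonicity hold and force the defect to be expelled through $\p B(0,1)$, thereby preventing attainment.
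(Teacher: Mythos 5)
Your high-level skeleton (argue by contradiction; combine the bubbling upper bound $m_\v(p,q)\le m_\v(q,q)+\pi(p-q)$ with a strict lower bound for any candidate minimizer) is the same as the paper's, and your Steps 1--2 are essentially imports: the upper bound is Proposition \ref{Property1Meps}.2 (via Proposition \ref{PropStandBub}), and the Dirichlet-level facts are what the paper cites from \cite{HR} (Proposition \ref{PropEpsInftHR}). But the entire analytic content of the theorem is the strict lower bound in your Step 3, and there you have a genuine gap: you yourself label it ``the main obstacle,'' describe what would have to be true (each interior zero contributes at least $\pi$, the remaining energy is at least $m_\v(q,q)$, and the reduced energy of a zero decreases monotonically toward the outer circle), and prove none of it. Worse, the route you sketch is ill-suited to this regime. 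Here $\v$ is \emph{large}, so the potential term $\frac{1}{4\v^2}\int(1-|u|^2)^2$ is nearly free: zeros carry no core energy, $|u|$ may be far from $1$ on large sets, and the vortex-ball/renormalized-energy localization you invoke has no small parameter to run on. The additive splitting also double-counts: the bound $\frac12\int|\nabla u|^2\ge\pi(p-q)$ comes from the \emph{global} Jacobian identity, and the restriction of $u$ to the complement of its zero set is not an element of $\J_{q,q}$, so neither the ``defect'' term nor the ``background'' term can be produced by localizing the energy. Finally, you claim the bound for \emph{every} $u\in\J_{p,q}$, which is more than is needed and more than the paper establishes.

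The paper's mechanism, absent from your proposal, avoids all of this. Since $m_\v(q,q)$ is attained (for $\r>\r_q$, Theorem \ref{P-GolBer}) by the radial solution $u_\v=\rho_\v e^{\imath q\theta}$ with $\rho_\v>0$, one substitutes $u=\rho_\v w$ and uses the exact identity $E_\v(\rho_\v w)=E_\v(u_\v)+L_\v(w)$ (Lemma 21 in \cite{BeRy}), which replaces your additive splitting by a decomposition with no interaction term to control. Crucially, the strict bound $L_\v(w_\v)>\pi(p-q)$ is then proved \emph{only for putative minimizers} along a sequence $\v_n\uparrow\infty$: Lemma \ref{LFond1} and Proposition \ref{PropEpsInftHR} identify their weak $H^1$-limit as the radial harmonic map, so $\tr_{\S^1}(w_\v e^{-\imath q\theta})\to1$ in $L^2(\S^1)$; expanding this trace in Fourier series, the degree identity $\sum_k k|a_k|^2=p-q$, Misiats' coefficient inequality $|a_k|\le c_\v|a_{-k}|+C_1\sum_l|a_l|^2$ (Proposition \ref{PropCoefV}), the comparison $\rho_\v\ge\rho_q$ (Lemma \ref{COmpRhov}), and an explicit mode-by-mode 1D ODE minimization with expansions in powers of $1-\r$ (this is exactly where thinness enters) yield the strict inequality and the contradiction with Proposition \ref{Property1Meps}.2. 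None of these ingredients --- the multiplicative decomposition, the asymptotic identification of minimizers as $\v\to\infty$, or the Fourier/ODE analysis --- appears in your proposal, so as written it does not constitute a proof.
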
 
A technique to prove non existence of minimizers [or local minimizers] with prescribed degrees for the Ginzburg-Landau energy was devised by Berlyand, Golovaty and Rybalko in \cite{BGR1}. They proved the non existence of minimizers of $E_\e$ in $\J_{1,1}$ for thick annular domain. Then, perfecting this technique, Misiats proved the non existence of  minimizers in some subset of $\I_{p,q}$ in \cite{Misiats1}. The first non existence result for global minimizers of the  Ginzburg-Landau energy with prescribed degrees $p\neq q$ and $pq>0$ was obtained by Mironescu in \cite{Size} following the ideas of Berlyand, Golovaty, Rybalko and Misiats. It can be rephrased as follows:

\begin{thm}(Thm 4.16-\cite{Size})\label{nonexistMir}
Let $p,q\in \mathbb{N}^*$, $pq>0$ then there exists a critical value of the capacity $C_{\min(p,q)}>0$ s.t. if $\capa(\dom) < C_{\min(p,q)}$ then $m_\e(p,q)$ is not attained for $\e$ small.
\end{thm}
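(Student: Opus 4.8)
The plan is to follow the scheme of Berlyand--Golovaty--Rybalko and Misiats, adapted to the unbalanced case $p\neq q$, $pq>0$, on a thin symmetric annulus $\Ar=B(0,1)\setminus\overline{B(0,\r)}$ with $\r$ close to $1$. The strategy is to argue by contradiction: suppose $u_\v$ is a minimizer of $E_\v$ in $\J_{p,q}$ for $\v>\v_{\min(p,q)}$ and $\r>\r_{\min(p,q)}$. The idea is to construct from $u_\v$ a competitor $\tilde u_\v\in\J_{p,q}$ with strictly smaller energy, thereby contradicting minimality. The construction is the standard one: one takes a small disc $B(x_0,\rho)\subset\dom$ touching the inner boundary $\p\o=\p B(0,\r)$, and inside this disc one ``moves'' a unit of degree from $\p\o$ to $\p\O$ (since $p\neq q$, the two boundaries carry different degrees, and a configuration that shifts degree between them can be glued in). The detachment of a degree-$1$ vortex-like profile near the boundary costs an energy that, for small $\v$, can be made strictly less than the energy one saves by relaxing the degree constraint.

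First I would record the key energetic lower bound for the unbalanced problem. Because $p\neq q$, any $u\in\J_{p,q}$ must have $|u|$ vanish somewhere in $\overline\dom$ or develop large gradients, and a topological/degree argument (cf. the lower bounds in \cite{BeMi1,Size}) gives a uniform lower bound $m_\v(p,q)\geq c>0$ that does not degenerate as $\v\to 0$; in fact on a thin annulus one expects $m_\infty(p,q)$ to be controlled in terms of the capacity. Concretely, I would use the conformal equivalence with a true circular annulus and the explicit radial computation to obtain, for the Dirichlet energy, a lower bound of the form $m_\infty(p,q)\geq \pi\,(p^2+q^2)/|\ln\r|\cdot(1+o(1))$ type expression as $\r\to 1$, together with the crucial strict inequality that any genuinely minimizing configuration at balanced degrees is cheaper. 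The point is to isolate the ``extra cost'' forced by $p\neq q$.

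Next comes the competitor construction, which is the heart of the argument. Following Misiats, I would take a minimizer $u_\v$ (assumed to exist), and near a boundary point of $\p\o$ insert a localized map on a half-disc of radius $\rho=\rho(\v)$ that changes $\deg_{\p\o}$ by $\pm 1$ while correspondingly changing $\deg_{\p\O}$, keeping the total degree pair equal to $(p,q)$ but in a way that \emph{lowers} the energy. The energy cost of such a boundary vortex insertion is $O(\rho^2/\v^2)$ from the potential term plus an $O(1)$ logarithmic-type elastic cost; the gain comes from the fact that the unbalanced minimizer is forced to carry energy at least the balanced value plus a gap, and by sending $\v\to 0$ the penalty term $\frac{1}{2\v^2}\int(1-|u_\v|^2)^2$ makes it energetically favorable to let $|u_\v|$ drop to zero on a small set, effectively ``splitting off'' a degree into the bulk. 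I would quantify that for $\v$ small enough and $\r$ close enough to $1$, this modification strictly decreases $E_\v$.

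\textbf{Main obstacle.}
The delicate point will be making the two asymptotic regimes cooperate: the competitor's energy involves a trade-off between the penalty term (which wants $\v$ small so that $|u|\neq 1$ is cheap) and the Dirichlet term (whose vortex-detachment cost must be beaten). I expect the hard part to be proving that the order of quantifiers in the statement can be achieved, namely that one can first fix $\r_{\min(p,q)}$ (so that the capacity is large enough, i.e. the annulus thin enough, to force the relevant strict energy gap between the $(p,q)$ and the ``split'' configuration) and \emph{then} choose $\v_{\min(p,q)}$ uniformly, so that the construction works for \emph{all} $\v$ below the threshold. Closing this will require uniform-in-$\v$ control of the minimizer $u_\v$ near $\p\o$, presumably via the clearing-out / $\eta$-ellipticity estimates of Brezis--Merle--Rivière type together with the boundary regularity from the natural boundary condition $u\wedge\p_\nu u=0$, to guarantee that the insertion region is genuinely free and that the energy accounting is rigorous rather than merely formal.
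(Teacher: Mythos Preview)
This theorem is not proved in the paper; it is quoted from Mironescu \cite{Size}, so there is no ``paper's own proof'' to compare against. What the paper does prove is the companion result, Theorem~\ref{ThmNonExist}, which sits in the \emph{opposite} regime (thin annulus, large~$\v$). With that caveat, your proposal has two substantive problems.

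\textbf{You have confused the two regimes.} The statement concerns \emph{thick} domains ($\capa(\dom)$ small, equivalently $\r_\dom$ close to~$0$) and \emph{small}~$\v$. Your opening paragraph sets up the argument ``on a thin symmetric annulus $\Ar$ with $\r$ close to~$1$'' and ``for $\v>\v_{\min(p,q)}$'', which is precisely the hypothesis of Theorem~\ref{ThmNonExist}, not of the present theorem. Later you switch to ``sending $\v\to 0$'' and ``for all $\v$ below the threshold''. You need to decide which theorem you are proving; as written, the hypotheses in the first paragraph flatly contradict those of the statement.

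\textbf{The competitor scheme is not the right mechanism.} Your plan is to take a putative minimizer $u_\v\in\J_{p,q}$ and build $\tilde u_\v\in\J_{p,q}$ with strictly smaller energy. But your description of the modification---inserting a boundary half-disc map ``that changes $\deg_{\p\o}$ by $\pm1$ while correspondingly changing $\deg_{\p\O}$, keeping the total degree pair equal to $(p,q)$''---is incoherent: a localized insertion near $\p\o$ cannot simultaneously alter the degree on $\p\O$, and if both degrees shift by the same amount you leave $\J_{p,q}$. More importantly, neither Mironescu's argument nor the paper's proof of Theorem~\ref{ThmNonExist} works by lowering the energy of the minimizer inside its own class. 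The actual scheme (see Section~\ref{ProofThmNonExist}) is: write any $u\in\J_{p,q}$ as $u=\rho_\v w$ with $\rho_\v$ the radial $\J_{q,q}$-minimizer, use the decomposition $E_\v(\rho_\v w)=m_\v(q,q)+L_\v(w)$, and prove via a Fourier expansion of $v=w e^{-\imath q\theta}$ that $L_\v(w)>\pi(p-q)$. This yields $m_\v(p,q)>m_\v(q,q)+\pi(p-q)$, which contradicts the upper bound from Proposition~\ref{Property1Meps}. The contradiction is with the \emph{infimum value}, not with minimality of a specific competitor. Your proposal never identifies this energy gap as the target inequality, and the clearing-out/$\eta$-ellipticity machinery you invoke is not what drives either proof.
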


\begin{remark}
Note that in the previous theorem the annulus is "thick", {\it i.e.}, $\capa(\dom)$ is small and $\e$ is small. Hence we are in the opposite situation of Theorem \ref{ThmNonExist}. However the proofs of these two theorems follow the same ideas. Note also that we can have $p=q$ in Theorem \ref{nonexistMir}. 
\end{remark}

Our approach is similar to the one mentioned before. In particular we follow the strategy of Misiats in \cite{Misiats1}. The new ingredient which allows us to obtain Theorem \ref{ThmNonExist} is a non existence result for minimizers of $E_\infty$ in $\J_{p,q}$ with $pq>0$ obtained in \cite{HR}  using the so-called Hopf quadratic differential. 

Before doing the proofs of both theorems (see Sections \ref{ProofThmExist}\&\ref{ProofThmNonExist}) we recall some classical results:

\begin{itemize}
\item[$\bullet$]In Section \ref{SectSomeTechRes} we recall some basic results used to prove Theorems \ref{PropEXist1}\&\ref{ThmNonExist}. 
\item[$\bullet$] In Sections \ref{LiteratureEpsNeqInfty}\&\ref{LiteratureEps=Infty} we list some results about the existence or the non existence of solution for $m_\v(p,q)$ for $\v\in]0,\infty[$ (Section \ref{LiteratureEpsNeqInfty}) or $\v=\infty$ (Section \ref{LiteratureEps=Infty}).
\end{itemize}
 \section{Some "basic" results and some pieces of the literature}%a little survey of the state of art}
\subsection{Bound for $m_\v(p,q)$ and cost to move  degrees}\label{SectSomeTechRes}
In the following for $(p,q),(p',q')\in\Z^2$, we denote 
\[
|(p,q)|=|p|+|q|\text{ and }|(p,q)-(p',q')|=|p-p'|+|q-q'|.
\]
\begin{prop}\label{Property1Meps}
Let $P,P'\in\Z^2$. For $0<\v'<\v\leq\infty$ we have:
\begin{enumerate}
\item $m_{\v}(P)\leq\pi|P|$,
\item $m_{\v}(P)\leq m_\v(P')+|P-P'|$,
\item $|m_{\v}(P)-m_{\v'}(P)|\to0$ if $\v'\uparrow\v$.
\end{enumerate}
\end{prop}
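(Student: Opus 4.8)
The plan is to prove the three estimates in the order (1)$\Leftarrow$(2), then (2), then (3), deriving (1) as the special case $P'=(0,0)$ of (2) (recall $m_\v(0,0)=0$), establishing (2) by an explicit degree‑changing ``boundary bubble'' insertion, and obtaining (3) from the monotonicity of $\v\mapsto m_\v(P)$ together with an elementary control of the penalization term. I first record this monotonicity: for $0<\v'<\v\le\infty$ and every $u\in\J$ one has $E_{\v'}(u)\ge E_\v(u)$ (same Dirichlet part, larger coefficient in front of $\int_\dom(1-|u|^2)^2$), hence $m_{\v'}(P)\ge m_\v(P)$. Thus in (3) only the reverse inequality, up to a vanishing error, remains.

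For (1)--(2) the building block is a single bubble localized near one boundary component $\Gamma\in\{\p\O,\p\o\}$. In a boundary chart flattening $\Gamma$ onto a segment of $\{y=0\}$ with $\dom$ lying in $\{y>0\}$, consider the transplanted M\"obius map $b_\lambda(z)=\frac{z-\imath\lambda}{z+\imath\lambda}$, $\lambda>0$ small: it has modulus $1$ on $\{y=0\}$, a single zero at $\imath\lambda$, converges to the constant $1$ locally uniformly away from the base point as $\lambda\to0$, winds once along $\Gamma$, and, being a conformal map onto the unit disc, has Dirichlet energy $\pi$ in the flat model (hence $\pi+o(1)$ after transplantation to $\dom$). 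Extending $b_\lambda$ by a radial cutoff to the constant $1$ in the interior of $\dom$ away from the base point (which keeps the trace of modulus $1$ on $\p\dom$ and costs $o(1)$ of energy), we multiply it into a smooth near‑minimizer $v\in\J_{P'}$ of $m_\v(P')$ (smooth maps being dense in $\J_{P'}$). Then $v\,b_\lambda\in\J_{P'+(1,0)}$ if $\Gamma=\p\O$ and $v\,b_\lambda\in\J_{P'+(0,1)}$ if $\Gamma=\p\o$: by the argument principle the extra zero of $b_\lambda$, enclosed by $\p\O$ but not by $\p\o$, raises only the degree on $\Gamma$, by $1$. Estimating the energy, the gradient cross terms and the discrepancy $E_\infty(vb_\lambda)-E_\infty(v)-E_\infty(b_\lambda)$ are concentrated at scale $\lambda$ around the base point and tend to $0$, the error from the curvature of $\Gamma$ is likewise $o(1)$, and since $|b_\lambda|\le1$ the added penalization is supported essentially in the bubble region and also tends to $0$ as $\lambda\to0$. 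Hence $m_\v(P'+(1,0))\le m_\v(P')+\pi$, and similarly for the other component and for lowering a degree (use $\overline{b_\lambda}$, a $-1$ bubble). Placing the $|p-p'|$ bubbles on $\p\O$ and the $|q-q'|$ bubbles on $\p\o$ at distinct base points, and for $\v=\infty$ dropping the penalization, yields
\[
m_\v(P)\le m_\v(P')+\pi\,|P-P'|,
\]
which is (2); taking $P'=(0,0)$ and $v\equiv 1$ gives (1).

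For (3), fix $\delta>0$ and pick $u\in\J_P$ with $E_\v(u)\le m_\v(P)+\delta$; since $\dom\subset\R^2$ is bounded and $u\in H^1$, the embedding $H^1(\dom)\hookrightarrow L^4(\dom)$ gives $K:=\int_\dom(1-|u|^2)^2<\infty$. For $\v'<\v$,
\[
m_{\v'}(P)\le E_{\v'}(u)=E_\v(u)+\Bigl(\tfrac1{2\v'^2}-\tfrac1{2\v^2}\Bigr)K\le m_\v(P)+\delta+\Bigl(\tfrac1{2\v'^2}-\tfrac1{2\v^2}\Bigr)K .
\]
As $\v'\uparrow\v$ the bracket tends to $0$ (both for $\v<\infty$ and, with $1/\v^2=0$, for $\v=\infty$), so $\limsup_{\v'\uparrow\v}m_{\v'}(P)\le m_\v(P)+\delta$; since $\delta$ is arbitrary and $m_{\v'}(P)\ge m_\v(P)$ by monotonicity, statement (3) follows.

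The main obstacle is the energy bookkeeping in (2): one must verify that inserting the bubble adds exactly $\pi+o(1)$, i.e. that the conformal energy $\pi$ of the model bubble is not polluted by the cross terms with $v$, by the boundary curvature, or by the penalization. This is precisely where the scale‑$\lambda$ localization and the bound $|b_\lambda|\le1$ are used; the remaining ingredients (monotonicity in $\v$, the degree count via the argument principle, and the $L^4$ bound) are soft.
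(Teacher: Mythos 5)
Your proposal is correct, but it does not follow the paper's route in the same way for all three items, so a comparison is in order. For items (1)--(2) the paper constructs nothing: it simply invokes Proposition \ref{PropStandBub} (``standard bubbling''), whose proof is outsourced to \cite{Dos1} (Lemma 7), and which asserts exactly the estimate $E_\v(v_\pm)\le E_\v(u)+\pi+\eta$ with $v_\pm\in\J_{\deg(u)\pm{\bf e}}$; your M\"obius construction $b_\lambda(z)=(z-\imath\lambda)/(z+\imath\lambda)$, transplanted conformally to a boundary chart, is precisely the content of that cited lemma, so here you are re-proving the black-boxed ingredient rather than arguing differently. The one technical point you should not gloss over is the cutoff: a plain convex interpolation $\chi b_\lambda+(1-\chi)$ does \emph{not} keep $|\cdot|=1$ on $\p\dom$ (a convex combination of two unit complex numbers has modulus $<1$ in general), so on the boundary one must cut off the \emph{phase} of $b_\lambda$ (or project the trace back to $\S^1$); this is the standard fix and does not affect the $o(1)$ bookkeeping. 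For item (3) your argument is genuinely different and simpler than the paper's: the paper manufactures a minimizing sequence consisting of solutions of $-\Delta u=\v^{-2}u(1-|u|^2)$ in order to invoke the maximum principle and get $|u|\le1$, whence the uniform bound $\int_\dom(1-|u|^2)^2\le|\dom|$; you instead fix a single near-minimizer $u\in\J_P$ of $E_\v$, note $K=\int_\dom(1-|u|^2)^2<\infty$, and let the vanishing coefficient $\bigl(\tfrac1{2\v'^2}-\tfrac1{2\v^2}\bigr)$ do the work, which combined with the monotonicity $m_{\v'}(P)\ge m_\v(P)$ gives (3) with no PDE and no maximum principle at all. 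Both are correct; yours is more elementary, while the paper's version has the side benefit of producing uniformly bounded minimizing sequences (a fact it reuses elsewhere). Finally, note that item (2) as printed in the statement is missing a factor $\pi$ in front of $|P-P'|$ (a typo: the $\pi$ appears every time the proposition is invoked later in the paper); the inequality you prove, $m_\v(P)\le m_\v(P')+\pi|P-P'|$, is the intended one.
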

\begin{remark}
Note that in the third assertion we may replace $\v'\uparrow\v$ by $\v'\to\v$ but in the following we only need $\v'\uparrow\v$.
\end{remark}
\begin{proof}
The two first assertions of Proposition \ref{Property1Meps} are direct consequences of Proposition \ref{PropStandBub} below. 

We prove the third assertion. For $P\in\Z^2$ and $\v'\uparrow\v\in]0,\infty]$ we consider $(u_{\v'})_{\v'}$ a minimizing sequence of $m_\v(P)$ s.t.  $-\Delta u_{\v'}=\dfrac{u_{\v'}}{\v^2}(1-|u_{\v'}|^2)$. It is clear that such minimizing sequence always exists. Thanks to the maximum principle (see \textit{e.g.} Proposition 2 \cite{BBH0}), we have $|u_{\v'}|\leq1$. Since $\v'<\v$ we have 
\begin{eqnarray*}
E_{\v'}(u_{\v'})\geq m_{\v'}(P)\geq m_\v(P)=E_\v(u_{\v'})-o_{\v'}(1)
\end{eqnarray*}
where $o_{\v'}(1)\to0$ when $\v'\to\v$.

\noindent We denote
\[
\K({\v'})=\begin{cases}\frac{1}{4\v'^2}-\frac{1}{4\v^2}&\text{if }\v\neq\infty\\\frac{1}{4\v'^2}&\text{if }\v=\infty\end{cases}.
\]
It is clear that we have $\K({\v'})\to0$ when $\v'\to\v$. Therefore we have
\begin{eqnarray*}
\K({\v'})|\dom|&\geq&\K({\v'})\int_\dom(1-|u_{\v'}|^2)^2
\\&=&E_{\v'}(u_{\v'})-E_{\v}(u_{\v'})\geq m_{\v'}(P)-m_\v(P)+o_{\v'}(1).
\end{eqnarray*}
Here $|\dom|$ is the measure of $\dom$. Since $m_{\v'}(P)-m_\v(P)\geq0$ we thus obtain that $m_{\v'}(P)-m_\v(P)\to0$ when $\v'\uparrow\v$.
\end{proof}

\begin{prop}\label{PropStandBub}[Standard bubbling]
Let $\v\in]0,\infty]$, $\eta>0$, ${\bf e}\in\{(1,0),(0,1)\}$ and $u\in\J$. There are $v_+,v_-\in\J$ s.t. $v_+\in\J_{\deg(u)+{\bf e}},\,v_-\in\J_{\deg(u)-{\bf e}}$ and
\begin{eqnarray}\label{SharpBubble}
E_\v(v_+)\leq E_\v(u)+\pi+\eta, \\
E_\v(v_-)\leq E_\v(u)+\pi+\eta. \label{SB2}
\end{eqnarray}
\end{prop}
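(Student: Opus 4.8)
The plan is to prove both \eqref{SharpBubble} and \eqref{SB2} by a single ``bubble insertion''. First I would reduce to the case $|u|\le1$: replacing $u$ by its projection onto the closed unit disk, $u^\ast=u\min(1,1/|u|)$, does not change the boundary trace (where $|u|=1$), hence keeps $u^\ast$ in the same sector $\J_{\deg(u)}$, and since $w\mapsto w\min(1,1/|w|)$ is $1$-Lipschitz one has $|\n u^\ast|\le|\n u|$ a.e.\ while $(1-|u^\ast|^2)^2\le(1-|u|^2)^2$; thus $E_\v(u^\ast)\le E_\v(u)$. So I may assume from now on that $|u|\le1$. The map will be $v_\pm=uG$, where $G$ is a ``bubble'' equal to $1$ outside a small set $U_\delta:=B(x_0,\delta)\cap\dom$ attached to a boundary point $x_0$, and winding $\pm1$ along the relevant boundary component. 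I describe $v_+$ with $\mathbf e=(1,0)$, i.e.\ $x_0\in\p\O$; the case $\mathbf e=(0,1)$ is identical with $x_0\in\p\o$, and $v_-$ is obtained by using $\overline{G}$ (winding $-1$).

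Because $G\equiv1$ on $\dom\setminus U_\delta$, there $uG=u$, and writing the energy as a sum over $U_\delta$ and its complement while discarding the nonnegative term $E_\v(u;U_\delta)$ gives the reduction
\[
E_\v(uG)\le E_\v(u)+E_\v(uG;U_\delta),
\]
where $E_\v(\cdot\,;U_\delta)$ denotes energy restricted to $U_\delta$. Thus it suffices to construct $G$ with $E_\v(uG;U_\delta)\le\pi+\eta$, with $|G|=1$ on $\p\O\cap U_\delta$ (so that $v_+\in\J$), with $G\equiv1$ near $\p\o$ (so $\deg_{\p\o}(v_+)=\deg_{\p\o}(u)$), and with winding $+1$ of $G$ along $\p\O$ (so $\deg_{\p\O}(v_+)=\deg_{\p\O}(u)+1$).

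For the bubble I would use a rescaled Cayley map. Since $\p\dom$ is smooth one can fix a conformal chart $\Psi$ (for instance built from the global map $\Phi$ of Definition \ref{DefConfRatio}) sending a half-disk $\{|z|<\delta\}\cap\mathbb H$ of the upper half-plane $\mathbb H$ onto a neighbourhood of $x_0$ in $\overline\dom$, with $\Psi(0)=x_0$ and $\Psi(\R\cap\{|z|<\delta\})\subset\p\O$. On $\mathbb H$ set $h_\lambda(z)=\dfrac{z-\imath\lambda}{z+\imath\lambda}$, a biholomorphism onto the unit disk $\D$ with $|h_\lambda|=1$ on $\R$, winding $+1$, and vortex core $\imath\lambda\to x_0$ as $\lambda\downarrow0$. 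Conformal invariance of the Dirichlet integral and injectivity give $\tfrac12\int_{\{|z|<\delta\}\cap\mathbb H}|\n h_\lambda|^2=\mathrm{Area}\big(h_\lambda(\{|z|<\delta\}\cap\mathbb H)\big)\le\pi$, with limit $\pi$ as $\lambda\downarrow0$. I then set $G=h_\lambda\circ\Psi^{-1}$ on $\Psi(\{|z|<\delta/2\}\cap\mathbb H)$, interpolate $G$ to the constant $1$ on the shell $\delta/2<|\Psi^{-1}(\cdot)|<\delta$ keeping modulus $1$ along $\p\O$, and put $G\equiv1$ elsewhere; since $h_\lambda\to1$ in $C^1$ away from $0$, the cut-off costs $o_\lambda(1)$ in energy and preserves both the winding and the boundary modulus. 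By conformal invariance $\tfrac12\int_{U_\delta}|\n G|^2\le\pi+o_\lambda(1)$, and $|G|\le1$.

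It remains to estimate $E_\v(uG;U_\delta)$ using $|u|\le1$ and $|G|\le1$. From $|\n(uG)|\le|G|\,|\n u|+|u|\,|\n G|$ one gets $\tfrac12\int_{U_\delta}|\n(uG)|^2\le\tfrac12\int_{U_\delta}|\n u|^2+\tfrac12\int_{U_\delta}|\n G|^2+\int_{U_\delta}|\n u|\,|\n G|$; the middle term is $\le\pi+o_\lambda(1)$, the first is small once $\delta$ is small by absolute continuity of $\int|\n u|^2$, and the cross term is bounded by Cauchy--Schwarz by $(\int_{U_\delta}|\n u|^2)^{1/2}(\int_{U_\delta}|\n G|^2)^{1/2}$, again small. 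The potential term is $\tfrac1{2\v^2}\int_{U_\delta}(1-|uG|^2)^2\le\tfrac{|U_\delta|}{2\v^2}\to0$ as $\delta\downarrow0$ (with $\v$ fixed), since $|uG|\le1$. Choosing first $\delta$ small and then $\lambda$ small yields $E_\v(uG;U_\delta)\le\pi+\eta$, hence \eqref{SharpBubble}; \eqref{SB2} follows with $\overline{h_\lambda}$. The membership $v_+\in\J$ holds because $|v_+|=1$ on $\p\dom$, and the degrees are as required.

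The hard part will be obtaining the \emph{sharp} constant $\pi$ rather than a larger multiple: a mere bi-Lipschitz flattening of $\p\O$ distorts the Dirichlet integral and would only give $E_\v(v_\pm)\le E_\v(u)+C\pi+\eta$. It is precisely the use of a conformal chart together with the conformal (in particular scale) invariance of the two-dimensional Dirichlet energy that makes the bubble cost $\pi+o(1)$. The two technical points to watch are that the preliminary truncation $|u|\le1$ is what lets the weight $|u|^2$ in front of $|\n G|^2$ be absorbed (for a general $u\in\J$ the pointwise values near the concentrating core are not controlled), and that the cut-off must be performed so as to preserve $|G|=1$ on the boundary arc, which is exactly what guarantees $v_\pm\in\J$.
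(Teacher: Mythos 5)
Your proof is correct. Note that the paper itself contains no argument for Proposition \ref{PropStandBub}: it simply defers to Lemma 7 of \cite{Dos1}, so there is no in-paper proof to compare against; your construction is precisely the standard bubbling argument of this literature (multiplication by a Möbius-type map concentrated at a boundary point, which is the technique behind the cited lemma and the constructions of Berlyand and Mironescu). The three points that carry the sharp constant are exactly the ones you isolate: (i) the preliminary truncation $u^\ast=u\min(1,1/|u|)$, which preserves the trace (hence the degrees), decreases $E_\v$, and yields the pointwise bound $|u|\le1$ needed to absorb the weight $|u|^2$ in front of $|\n G|^2$; (ii) the conformal chart together with the Möbius bubble $h_\lambda$, whose Dirichlet energy is controlled by the area of its univalent image, giving the cost $\pi+o_\lambda(1)$ rather than a larger multiple of $\pi$; (iii) the cut-off performed in polar form so that $|G|=1$ holds exactly on the boundary arc, which is what keeps $v_\pm\in\J$ and makes the winding contribution an exact integer. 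One harmless imprecision: at a point $x_0\in\p\o$, an orientation-preserving (conformal) chart sends the increasing real axis to $\p\o$ traversed \emph{clockwise}, since $\dom$ lies on the exterior side of $\o$; thus to raise $\deg_{\p\o}$ by one you must use $\overline{h_\lambda}$ there, and $h_\lambda$ to lower it. As both bubbles cost $\pi+o_\lambda(1)$, this only amounts to relabelling $v_+$ and $v_-$ and does not affect the conclusion.
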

The proof of Proposition \ref{PropStandBub} may be found in \cite{Dos1} Lemma 7.

In order to drop $\eta$ in \eqref{SharpBubble} and  \eqref{SB2} and to replace the large inequality by a strict inequality, we need an extra-hypothesis about the behavior of $u$ on the connected component of $\p\dom$ where the degree is modified. 
\begin{prop}\label{ShaprBBL}Let $\v\in]0,\infty]$ and let $u\in\J_{p,q}$ be any function which satisfies $|u|\leq1$ in $\dom$ and $\p_\nu|u|>0,\,u\wedge\p_\nu u=0$ on $\p\O$.
\begin{enumerate}
\item Assume that there is $x_0\in\p\O$ s.t. $u\wedge\p_\tau u(x_0)>-u\cdot\p_\nu u(x_0)$ then there exists $v\in\J_{p-1,q}$ s.t. $E_\v(v)<E_\v(u)+\pi$.
\item Assume that there is $x_0\in\p\O$ s.t. $u\wedge\p_\tau u(x_0)<u\cdot\p_\nu u(x_0)$ then there exists $v\in\J_{p+1,q}$ s.t. $E_\v(v)<E_\v(u)+\pi$.
\end{enumerate}
An analogous lemma can be stated considering the other boundary $\p\o$.
\end{prop}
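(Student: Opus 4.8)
The plan is to prove statement~(1) (lowering the degree on $\p\O$ from $p$ to $p-1$); statement~(2) is obtained by the identical construction with the orientation of the inserted bubble reversed, and the variant for $\p\o$ by exchanging the roles of the two boundary components. Since the Dirichlet energy is conformally invariant and $\dom=\Phi(\Ar)$, I would first pass to conformal coordinates flattening $\p\O$ near $x_0$, so that a neighbourhood of $x_0$ in $\dom$ becomes a half-disk $\{|z|<\delta,\ \mathrm{Im}\,z>0\}$ with $x_0\mapsto 0$ and $\p\O$ becoming the diameter. In these coordinates $\p_\tau=\p_x$ and $\p_\nu=-\p_y$, and a short computation turns the hypothesis $u\wedge\p_\tau u(x_0)>-\,u\cdot\p_\nu u(x_0)$ into the single scalar inequality $\mathrm{Im}\big(\overline u\,\p_z u\big)(x_0)>0$, i.e. positivity of the ``holomorphic momentum'' of $u$ at $x_0$. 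The standing assumptions $|u|\le1$ and $u\wedge\p_\nu u=0$ guarantee that $|u|<1$ just inside $\p\O$ (because $\p_\nu|u|>0$) and that $\nabla(\arg u)(x_0)$ is purely tangential; these are the two facts that drive the gain below.

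The competitor is $v=u\,\phi_\lambda$, where $\phi_\lambda$ is an anti-holomorphic M\"obius half-bubble of degree $-1$ concentrated at $x_0$ at scale $\lambda$, truncated so that $\phi_\lambda\equiv1$ outside the half-disk while $|\phi_\lambda|=1$ on the diameter. By construction $|v|=1$ on $\p\dom$ and $\phi_\lambda=1$ near $\p\o$, so $v\in\J$; the winding of $\phi_\lambda$ along $\p\O$ is $-1$, so $v\in\J_{p-1,q}$; and the Dirichlet energy of the bubble tends to $\pi$, the conformal energy of the half-plane onto the unit disk, as $\lambda\to0$. Writing $u=\rho e^{i\psi}$ near $x_0$ (legitimate since $|u(x_0)|=1$) and $\phi_\lambda=|\phi_\lambda|e^{i\beta}$, I would expand $E_\v(v)-E_\v(u)$ and isolate the term of order $\lambda$. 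This is where the hypothesis enters: the modulus interaction, nonzero precisely because $\p_\nu|u|>0$, and the phase interaction between $\nabla\psi$ and $\nabla\beta$ combine — after an integration by parts using $u\wedge\p_\nu u=0$ — into a first-order correction equal to a strictly positive multiple of $-\big(u\wedge\p_\tau u(x_0)+u\cdot\p_\nu u(x_0)\big)$, that is, of $-\,\mathrm{Im}(\overline u\,\p_z u)(x_0)$. The potential term is absent when $\v=\infty$ and, for $1<\v<\infty$, contributes at the same order but with a factor $\v^{-2}$ which the choice of bubble keeps subordinate to the Dirichlet gain.

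Under hypothesis~(1) this first-order correction is strictly negative, so for $\lambda$ small enough $E_\v(v)<E_\v(u)+\pi$, as required. The main obstacle is exactly this sharp first-order expansion: one has two competing small scales — the bubble scale $\lambda$ and the truncation radius $\delta$ — and one must show that the only genuinely $O(\lambda)$ (i.e. $\delta$-independent) contribution assembles into the signed boundary quantity of the hypothesis, while the gluing error on the truncation annulus, the deviation of the conformal factor from a constant, and the excess potential $\tfrac1{\v^2}\int(1-|v|^2)^2-\tfrac1{\v^2}\int(1-|u|^2)^2$ are all either of higher order or of the favourable sign. Equivalently, one may encode this first variation through the boundary value of the Hopf differential of $u$ at $x_0$, which is the route best suited to tracking the precise sign. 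Statement~(2) follows verbatim with the holomorphic bubble $\phi_\lambda$ of degree $+1$, whose correction is a positive multiple of $-\big(u\cdot\p_\nu u(x_0)-u\wedge\p_\tau u(x_0)\big)=+2\,\mathrm{Im}(\overline u\,\p_{\bar z}u)(x_0)$, negative precisely under hypothesis~(2).
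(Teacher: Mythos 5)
The paper does not actually prove Proposition \ref{ShaprBBL}: it quotes it from \cite{RS1} (Lemma 1.2). Your outline follows the same route as that reference and as the non-sharp bubbling of Proposition \ref{PropStandBub}: flatten $\partial\Omega$ conformally at $x_0$, multiply $u$ by a truncated M\"obius half-bubble $\phi_\lambda$ of winding $\mp1$ concentrated at $x_0$, and expand $E_\varepsilon(u\phi_\lambda)-E_\varepsilon(u)$ to first order in the bubble scale $\lambda$. Several of your assertions are correct and easily checked: hypothesis (1) is equivalent to $\mathrm{Im}(\overline u\,\partial_z u)(x_0)>0$; the competitor $u\phi_\lambda$ does lie in $\J_{p-1,q}$; and the bubble's Dirichlet energy tends to $\pi$.

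What you have written, however, is a strategy rather than a proof: the step you yourself call ``the main obstacle'' --- that the interaction terms assemble into a strictly negative multiple of $u\wedge\partial_\tau u(x_0)+u\cdot\partial_\nu u(x_0)$ at an order dominating all errors --- is the entire content of the proposition (the bound $E_\varepsilon(u)+\pi+\eta$ is already Proposition \ref{PropStandBub}), and it cannot be waved through, because the naive expansion gives a \emph{different} answer. Writing $u=\rho e^{\imath\psi}$ and $\phi_\lambda=\sigma e^{\imath\beta}$, the cross term in $|\nabla(u\phi_\lambda)|^2$ is $2\rho\sigma\left[\nabla\rho\cdot\nabla\sigma+\rho\sigma\nabla\psi\cdot\nabla\beta\right]$; freezing $\nabla\rho,\nabla\psi$ at $x_0$ (where $\partial_\tau\rho=0$ and, by $u\wedge\partial_\nu u=0$, $\partial_\nu\psi=0$) and using the Cauchy--Riemann relation $\nabla\log\sigma=\pm(\nabla\beta)^{\perp}$ of the (anti)holomorphic bubble, the integrand becomes $\sigma^{2}\,\partial_\tau\beta\,\left[u\wedge\partial_\tau u(x_0)\mp u\cdot\partial_\nu u(x_0)\right]$: for the degree-lowering bubble this is the \emph{difference}, not the sum, of the two boundary quantities, so at face value your construction would require the stronger hypothesis $u\wedge\partial_\tau u(x_0)>u\cdot\partial_\nu u(x_0)$. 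The stated hypothesis suffices only because of cancellations that your outline never addresses: the apparently dominant $O(\lambda\log(\delta/\lambda))$ part of $\int\sigma^{2}\partial_\tau\beta$ vanishes by angular cancellation, and the true order-$\lambda$ coefficient collects, besides the cross term, the two first-order deficits $\frac12\int(\sigma^{2}-1)|\nabla u|^{2}$ and $\frac12\int(\rho^{2}-1)|\nabla\phi_\lambda|^{2}$ (both driven by $\partial_\nu|u|$ near $x_0$); extracting it requires integrations by parts exploiting that $\sigma^{2}\nabla\beta=\phi_\lambda\wedge\nabla\phi_\lambda$ is divergence free, that $\Delta(\sigma^{2})=2|\nabla\phi_\lambda|^{2}$, and that $\rho\equiv1$ on $\partial\Omega$. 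Until this bookkeeping is carried out and shown to yield the sum $u\wedge\partial_\tau u(x_0)+u\cdot\partial_\nu u(x_0)$ with the correct sign, dominating the truncation error $O(\lambda^{2}/\delta^{2})$, the coefficient-freezing error $O(\lambda\delta)$, and the potential excess --- which, note, has the \emph{unfavourable} sign since $|u\phi_\lambda|\le|u|$, so it must be shown small, not dismissed as favourable --- there is no proof of the strict inequality with threshold exactly $\pi$. A minor point: your restriction $1<\varepsilon<\infty$ is unnecessary; once the gain is of order $\lambda$ and the potential excess of order $\lambda\delta^{2}+\lambda^{2}\log(\delta/\lambda)$, choosing $\delta$ small and then $\lambda\to0$ handles every $\varepsilon\in\,]0,\infty]$, as the statement requires.
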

Proposition \ref{ShaprBBL} is proved in \cite{RS1} (Lemma 1.2).

One of the main tool in the study of the minimization of $E_\v$ in $\J_{p,q}$ is the beautiful {\it Price Lemma}. As explain before, the degree $\deg:\J\to\Z^2$ is not continuous for the weak $H^1$ convergence, this lemma expresses the energetic cost to modify degrees  for a weak $H^1$-limit.
\begin{lem}[Price Lemma see Lemma 1 in {\cite{BeMi1}}] \label{PriceLemma}
Let $P\in\Z^2$ and $(u_n)_n\subset\J_P$ s.t. $u_n\weak u$ in $H^1(\dom)$. Then
\[
\liminf_{n \rightarrow +\infty} E_\infty(u_n)\geq E_\infty(u)+\pi|P-\deg(u)|.
\]
Using Sobolev embeddings it also holds that, for all $\e>0$:
\[
\liminf_{n \rightarrow +\infty} E_\e(u_n)\geq E_\e(u)+\pi|P-\deg(u)|.
\]
\end{lem}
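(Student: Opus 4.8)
The plan is to split the statement into three parts: first that $u$ genuinely lies in $\J$, so that $\deg(u)$ makes sense; then a reduction of the $E_\e$-inequality to the $E_\infty$-inequality; and finally the Dirichlet lower bound, which is the heart of the matter. For the first point, recall that the trace $H^1(\dom)\to L^2(\p\dom)$ is compact; hence, along a subsequence, $\tr_{\p\dom}u_n\to\tr_{\p\dom}u$ in $L^2(\p\dom)$ and almost everywhere, and since $|\tr_{\p\dom}u_n|=1$ this forces $|\tr_{\p\dom}u|=1$ a.e., while $\tr_{\p\dom}u_n$ is bounded in $H^{1/2}(\p\dom)$, so its weak limit $\tr_{\p\dom}u$ lies in $H^{1/2}(\p\dom,\S^1)$. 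Thus $u\in\J$. For the reduction, I use that in dimension two $H^1(\dom)$ embeds compactly into $L^4(\dom)$, so $u_n\to u$ in $L^4$ and $(1-|u_n|^2)^2\to(1-|u|^2)^2$ in $L^1(\dom)$; the potential term therefore converges, and consequently
\[
\liminf_n E_\e(u_n)=\frac{1}{2\e^2}\int_\dom(1-|u|^2)^2+\liminf_n\frac12\int_\dom|\n u_n|^2.
\]
Granting the Dirichlet estimate $\liminf_n\frac12\int_\dom|\n u_n|^2\geq E_\infty(u)+\pi|P-\deg(u)|$, the $E_\e$-inequality follows at once.

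It remains to prove the Dirichlet estimate, and here I would argue through the distributional Jacobian $Ju_n=\p_1u_n\wedge\p_2u_n$ and the supercurrent $j_n=u_n\wedge\n u_n$. From $u_n\to u$ in $L^4$ and $\n u_n\weak\n u$ in $L^2$ one gets $j_n\weak u\wedge\n u$ in $L^{4/3}(\dom)$, hence $Ju_n\to Ju$ distributionally away from the concentration set. The key analytic input is the structure and lower-semicontinuity of Jacobians at the critical exponent (Jerrard--Soner): along a subsequence there are finitely many atoms $a_i\in\overline{\dom}$ with multiplicities $k_i\in\Z$ such that $Ju_n\weak Ju+\pi\sum_i k_i\,\delta_{a_i}$ as measures, and moreover
\[
\liminf_n\frac12\int_\dom|\n u_n|^2\ \geq\ \frac12\int_\dom|\n u|^2+\pi\sum_i|k_i|.
\]
The constant $\pi$ is sharp: it is exactly the Dirichlet energy of the degree-one profile $r\,e^{\imath\theta}$ on a disc, and it matches the cost of the standard bubble in Proposition \ref{PropStandBub}.

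To finish, I must identify the total signed mass of the atoms lying on each boundary component with the degree defect there. Fix $\Gamma\in\{\p\O,\p\o\}$, write $P_\Gamma$ for the corresponding component of $P$ (so $\deg_\Gamma(u_n)=P_\Gamma$), and choose a cutoff $\phi$ equal to $1$ on a neighborhood of $\Gamma$ and supported in a thin collar of $\Gamma$. Integrating $\phi\,Ju_n$ by parts produces a boundary term $\tfrac12\int_\Gamma\phi\,(u_n\wedge\p_\tau u_n)=\pi P_\Gamma$ (up to the orientation of $\Gamma$) and an interior term $\tfrac12\int(\n^\bot\phi)\cdot j_n$ which converges, since $j_n\weak u\wedge\n u$, to the same expression built from $u$. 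Comparing the limits of $\int\phi\,Ju_n$ and of $\int\phi\,Ju$ gives $\pi\sum_{a_i\ \text{near}\ \Gamma}k_i=\pi\bigl(P_\Gamma-\deg_\Gamma(u)\bigr)$, whence $\sum_{a_i\ \text{near}\ \Gamma}|k_i|\geq|P_\Gamma-\deg_\Gamma(u)|$. Summing over the two boundary components in the lower bound of the previous paragraph yields
\[
\liminf_n\tfrac12\int_\dom|\n u_n|^2\geq E_\infty(u)+\pi\bigl(|P_{\p\O}-\deg_{\p\O}(u)|+|P_{\p\o}-\deg_{\p\o}(u)|\bigr)=E_\infty(u)+\pi|P-\deg(u)|.
\]

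The main obstacle is the concentration lower bound quoted above with its sharp constant $\pi$: one must rule out that the Dirichlet energy carried by the escaping degree collapses below $\pi$ per unit, which is precisely the delicate borderline behaviour of the Jacobian at $p=n=2$ where bubbling occurs. If one prefers a self-contained route, this input can be replaced by a quantitative slicing argument in a shrinking collar $C_\delta$ of $\Gamma$, combining the pointwise inequality $|\n u_n|^2\geq2|Ju_n|$ with $\int_{C_\delta}Ju_n=\pi\bigl(\deg_\Gamma(u_n)-\deg_{\gamma_\delta}(u_n)\bigr)$ on a good inner slice $\gamma_\delta$ and the vanishing of the Jacobian mass of the fixed limit $u$ on $C_\delta$ as $\delta\to0$; the difficulty then migrates to controlling the inner-slice degrees $\deg_{\gamma_\delta}(u_n)$, which is once more a concentration statement.
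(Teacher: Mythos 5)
Your first two parts are fine and match what the paper intends: the verification that $u\in\J$ is standard, and the reduction of the $E_\e$-inequality to the Dirichlet one via the compact embedding $H^1(\dom)\hookrightarrow L^4(\dom)$ is exactly the paper's one-line justification ``Using Sobolev embeddings'' (the paper, for its part, does not prove the Dirichlet inequality at all --- it cites Lemma 1 of \cite{BeMi1}). The genuine gap is in your third part, at the step you yourself call the ``key analytic input''. The statement you attribute to Jerrard--Soner --- that along a subsequence $Ju_n\weak Ju+\pi\sum_ik_i\,\delta_{a_i}$ with $k_i\in\Z$ together with $\liminf_n\frac12\int_\dom|\n u_n|^2\geq\frac12\int_\dom|\n u|^2+\pi\sum_i|k_i|$ --- is not an off-the-shelf theorem in this setting: the Jerrard--Soner compactness and lower bounds concern the Ginzburg--Landau regime $\e\to0$ with energies of order $|\log\e|$, not a fixed sequence merely bounded in $H^1$. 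For an $H^1$-bounded sequence the quantized-atoms-with-cost-$\pi$ statement (here necessarily a \emph{boundary} bubbling statement, since, as your own argument $j_n\weak j$ in $L^{4/3}$ shows, $Ju_n\to Ju$ in $\mathcal{D}'(\dom)$ and no interior concentration can occur) is essentially equivalent to the Price lemma itself. You acknowledge this twice (``the main obstacle is the concentration lower bound \ldots'', ``the difficulty then migrates \ldots which is once more a concentration statement''), so as written the proof is circular at its central step.

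What is worth noting is that your own ingredients close the gap without any atomicity or quantization. Keep the cutoff bookkeeping from your last paragraph: for $\Gamma\in\{\p\O,\p\o\}$ and $\phi_\delta$ a cutoff, $0\leq\phi_\delta\leq1$, equal to $1$ on $\Gamma$ and supported in a $\delta$-collar $C_\delta$, the localized version of \eqref{oldformula} plus $j_n\weak j$ in $L^{4/3}(\dom)$ gives
\[
\lim_n\int_\dom\phi_\delta\,Ju_n=\int_\dom\phi_\delta\,Ju+\pi\bigl(P_\Gamma-\deg_\Gamma(u)\bigr).
\]
Then the pointwise inequality $\frac12|\n u_n|^2\geq|Ju_n|$ yields
\[
\liminf_n\frac12\int_\dom\phi_\delta|\n u_n|^2\geq\liminf_n\Bigl|\int_\dom\phi_\delta\,Ju_n\Bigr|\geq\pi\bigl|P_\Gamma-\deg_\Gamma(u)\bigr|-\int_{C_\delta}|Ju|,
\]
where the error uses only $0\leq\phi_\delta\leq1$ and the support of $\phi_\delta$. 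Writing $\frac12\int_\dom|\n u_n|^2$ as the sum of the two collar pieces and the piece weighted by $1-\phi_\delta-\phi_\delta'$, using weak lower semicontinuity on the latter, superadditivity of $\liminf$, and finally letting $\delta\to0$ (so $\int_{C_\delta\cup C_\delta'}|Ju|\to0$ by absolute continuity of the integral of the fixed $L^1$ function $Ju$, and $\frac12\int(1-\phi_\delta-\phi_\delta')|\n u|^2\to E_\infty(u)$ by dominated convergence) gives precisely $\liminf_nE_\infty(u_n)\geq E_\infty(u)+\pi|P-\deg(u)|$. In other words, only the \emph{total mass} of the Jacobian defect near each boundary component is needed, and that is pinned down by your identity; the sharp constant $\pi$ enters through $|\n u_n|^2\geq2|Ju_n|$, not through any bubbling analysis. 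Rearranged this way, your proposal becomes a complete, self-contained proof, and one that is more elementary than the concentration-compactness framing you attempted.
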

\subsection{Some known Existence/Non Existence results: the case $\v\in]0,\infty[$}\label{LiteratureEpsNeqInfty}
The first non existence result is certainly the following.
\begin{prop}
Let $\v>0$, if $(p,q)\in\Z^2$ are s.t. $(p,q)\neq(0,0)$ and $pq\leq0$, then $m_\v(p,q)$ is not attained.
\end{prop}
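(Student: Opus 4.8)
The plan is to prove non-attainment by showing that \emph{every} competitor has energy strictly above the value of the infimum. Concretely, I would combine the upper bound $m_\v(p,q)\le\pi(|p|+|q|)$ from Proposition~\ref{Property1Meps}(1) with a strict lower bound $E_\v(u)>\pi(|p|+|q|)$ valid for \emph{all} $u\in\J_{p,q}$; together these give $E_\v(u)>m_\v(p,q)$ for every $u$, i.e. the infimum is not reached. Writing $E_\v(u)=E_\infty(u)+\frac{1}{4\v^2}\int_\dom(1-|u|^2)^2$, the strict lower bound splits into two claims about a fixed $u\in\J_{p,q}$: a sharp bound $E_\infty(u)\ge\pi(|p|+|q|)$ on the Dirichlet part, and strict positivity of the potential part.

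For the Dirichlet part I would argue with the Jacobian $Ju:=\p_1u\wedge\p_2u$. The elementary pointwise inequality $|Ju|\le\frac12|\n u|^2$ gives $E_\infty(u)=\frac12\int_\dom|\n u|^2\ge\int_\dom|Ju|\ge\bigl|\int_\dom Ju\bigr|$, so it suffices to evaluate $\int_\dom Ju$. Since the scalar curl of the current $u\wedge\n u$ equals $2Ju$, Green's formula together with the definition of $\deg_{\p\O}$ and $\deg_{\p\o}$ yields $\int_\dom Ju=\pi(\deg_{\p\O}(u)-\deg_{\p\o}(u))=\pi(p-q)$. As $pq\le0$ forces $|p-q|=|p|+|q|$, this produces exactly $E_\infty(u)\ge\pi(|p|+|q|)$ for every $u\in\J_{p,q}$.

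The strictness of the potential term is where the hypothesis $(p,q)\neq(0,0)$ with $pq\le0$ is used, through $p\neq q$. If some $u\in\J_{p,q}$ satisfied $|u|=1$ almost everywhere, it would be an $H^1$ map from the annulus into $\S^1$; then $\n u\perp u$ forces $Ju\equiv0$, and the identity above would give $p=q$, a contradiction. Hence $|u|<1$ on a set of positive measure and $\frac{1}{4\v^2}\int_\dom(1-|u|^2)^2>0$ for every $u\in\J_{p,q}$. Adding this to the Dirichlet bound yields $E_\v(u)>\pi(|p|+|q|)$, closing the squeeze.

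I expect the only delicate point to be the rigorous justification of the topological identity $\int_\dom Ju=\pi(p-q)$ in the $H^1$ setting: here $Ju$ is merely an $L^1$ function and the boundary degrees are defined via the $H^{1/2}$--$H^{-1/2}$ duality, so the integration by parts must be obtained by approximation (or through the distributional Jacobian), with careful orientation bookkeeping for the two components of $\p\dom$. The remaining ingredients --- the pointwise inequality, the arithmetic identity $|p-q|=|p|+|q|$ for $pq\le0$, and the final comparison --- are routine. It is worth noting that this approach avoids the Euler--Lagrange equations and any compactness or bubbling analysis entirely; non-existence comes purely from the pointwise strict gain that $p\neq q$ provides in the potential.
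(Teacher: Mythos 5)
Your proof is correct and uses essentially the same ingredients as the paper's: the pointwise inequality $|\n u|^2\geq 2|{\rm Jac}\,u|$, the degree formula \eqref{oldformula}, the arithmetic identity $|p-q|=|p|+|q|$ when $pq\leq 0$, and the upper bound $m_\v(p,q)\leq\pi(|p|+|q|)$ from Proposition \ref{Property1Meps}. The only difference is organizational: you prove the strict inequality $E_\v(u)>\pi(|p|+|q|)\geq m_\v(p,q)$ directly for every $u\in\J_{p,q}$, whereas the paper argues by contradiction from a hypothetical minimizer; in both cases the strictness rests on the same observation that an $\S^1$-valued map in $\J_{p,q}$ has vanishing Jacobian and hence would force $p=q$.
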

\begin{proof}
The starting point of the proof are  the two following estimates :
\begin{itemize}
\item the pointwise inequality $|\n u|^2\geq 2|{\rm Jac\,} u|$ [here ${\rm Jac}\, u=u_x \wedge u_y$ is the Jacobian of $u$];
\item the degree formula valid for $u\in\J$ (see \textit{e.g.} (1.6) in \cite{Oldandnew}) :
\begin{equation}\label{oldformula}
\left|\int_\dom{\rm Jac}\, u\right|=\pi|\deg_{\p\O}(u)-\deg_{\p\o}(u)|.
\end{equation}
\end{itemize}
By combining both previous estimates, if $pq\leq0$, then for all $u\in\J_{p,q}$, we easily obtain that 
\[
\dfrac{1}{2}\int_\dom|\n u|^2\geq\pi(|p|+|q|).
\]
On the other hand, by Proposition \ref{Property1Meps}.1 it holds that
\[
\inf_{\J_{p,q}}E_\v\leq \pi(|p|+|q|).
\]
By combining both bounds, we obtain 
\[
\inf_{\J_{p,q}}E_\v= \pi(|p|+|q|).
\]

\noindent Now we argue by contradiction and we assume that there exists $\v>0$ s.t. $m_\v(p,q)$ is attained by $u_\v$. Then we have 
\[
\pi(|p|+|q|)=\dfrac{1}{2}\int_\dom|\n u_\v|^2=E_\v(u_\v).
\]
Therefore $\int_\dom(1-|u_\v|^2)^2=0$, {\it i.e.}, $u_\v\in H^1(\dom,\S^1)$. Since $u_\e$ is $\mathbb{S}^1$-valued we have $\text{ Jac} \ u_\e =0$ and the degree formula \eqref{oldformula} implies that $p=q$. This fact is in contradiction with $(p,q)\neq(0,0)$ and $pq\leq0$.
\end{proof}

 Our main results deal with the remaining cases: $pq>0$. It is obvious that this condition means $p,q>0$ or $p,q<0$. Without lack of generality we may assume that $p,q>0$ (since $\deg(\overline{u},\Gamma)=-\deg(u,\Gamma)$ for  $\Gamma \in \{\partial \Omega, \partial \omega) \})$.\\

In an annular $\Ar=B(0,1)\setminus\overline{B(0,\r)}$, a natural candidate to be a minimizer for $m_\v(p,p)$ is the {\it radial Ginzburg-Landau solution of degree $p$}. The radial Ginzburg-Landau solution  of degree $p$  is a special solution of the semi-stiff problem
\begin{equation}\nonumber%\label{EqRadSol}
\begin{cases}
-\Delta u=\dfrac{u}{\v^2}(1-|u|^2)^2&\text{in }\Ar\\
|u|=1,\,u\wedge\p_\nu u=0&\text{on }\p\Ar
\end{cases}.
\end{equation}
This solution is of the form 
\begin{equation}\label{EqRadEpsSol}
u_{\v,p}(x)=\rho_{\v,p}(|x|)\left(\dfrac{x}{|x|}\right)^p
\end{equation}
 where $\rho_{\v,p}\in C^\infty([\r,1],[0,1])$ is the unique solution of 
\begin{equation}\label{ModulusEqEpsRadSol}
\begin{cases}
-\rho''-\dfrac{\rho'}{r}+\dfrac{p^2 \rho}{r^2}=\dfrac{\rho}{\v^2}(1-\rho^2)\text{ in }]\r,1[\\\rho(\r)=\rho(1)=1
\end{cases}.
\end{equation}

As seen in the introduction, Berlyand and Golovaty proved a very precise existence result (see Theorem 2.13 in \cite{GB1}) for the minimization of $E_\v$ in $\J_{p,p}$ with $p\geq1$ in annulars $\Ar=B(0,1)\setminus\overline{B(0,\r)}$ for $\r$ sufficiently close to $1$.

For the special cases $p=q=1$ and for an annular type domain $\dom$, by using a compilation of works of Berlyand, Golovaty, Mironescu and Rybalko (see {\it e.g.} \cite{BeMi0}, \cite{BeMi1}, \cite{BGR1}) we may state the following proposition:
\begin{prop}\label{Prop-Compile}
Let $\dom\subset\R^2$ be an annular type domain and let $\r_\dom$ be the conformal {\it ratio} of $\dom$.
\begin{itemize}
\item[$\bullet$] If $\r_\dom\leq {\rm e}^2$ then $m_\v(1,1)$ is attained for all $\v$.  
\item[$\bullet$] If $\r_\dom> {\rm e}^2$ then then there is $\v_0>0$ s.t., for $\v>\v_0$, $m_\v(1,1)$ is attained and, for $\v<\v_0$, $m_\v(1,1)$ is not attained.
\end{itemize}
%[Here $\v\neq\infty$]
\end{prop}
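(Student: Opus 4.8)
The plan is to situate $m_\v(1,1)$ with respect to the \emph{bubbling level} $2\pi$ and to convert this into attainment through the Price Lemma. On the one hand, Proposition \ref{PropStandBub} applied twice to a constant of modulus one — one bubble on $\p\O$ and one on $\p\o$ — yields $m_\v(1,1)\leq 2\pi$ for every $\v\in]0,\infty]$. On the other hand, if $(u_n)\subset\J_{1,1}$ is a minimizing sequence with $u_n\weak u$, Lemma \ref{PriceLemma} combined with the Jacobian bound $E_\v(u)\geq E_\infty(u)\geq\pi|\deg_{\p\O}(u)-\deg_{\p\o}(u)|$ coming from \eqref{oldformula} forces, for any weak limit of degree $(p',q')\neq(1,1)$,
\[
m_\v(1,1)\geq\pi\bigl(|p'-q'|+|1-p'|+|1-q'|\bigr)\geq 2\pi .
\]
Hence whenever $m_\v(1,1)<2\pi$ the weak limit must have degree $(1,1)$, so $u$ is a minimizer: \emph{strict inequality $m_\v(1,1)<2\pi$ is equivalent to attainment}. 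Everything then reduces to locating $m_\v(1,1)$ relative to $2\pi$.

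For the upper bounds I would transport competitors from the model annulus $\{\r_\dom<|x|<1\}$ of Definition \ref{DefConfRatio} by the conformal invariance of $E_\infty$. The $\S^1$-valued map $x/|x|$ has no potential, so for \emph{every} $\v$ it gives $m_\v(1,1)\leq\pi|\ln\r_\dom|=2\pi^2/\capa(\dom)$; relaxing the modulus produces the Dirichlet competitor $\rho(|x|)\,x/|x|$ with $r^2\rho''+r\rho'-\rho=0$ and $\rho(\r_\dom)=\rho(1)=1$, of energy $2\pi(1-\r_\dom)/(1+\r_\dom)<2\pi$, available at $\v=\infty$. Two cases follow at once. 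If $\capa(\dom)>\pi$, i.e.\ $2\pi^2/\capa(\dom)<2\pi$ (the thin regime, which is the stated first bullet), then $m_\v(1,1)<2\pi$ for all $\v$ and the infimum is always attained. If $\capa(\dom)<\pi$ (the thick regime), one still has $m_\infty(1,1)\leq 2\pi(1-\r_\dom)/(1+\r_\dom)<2\pi$; since $m_\v(1,1)\to m_\infty(1,1)$ as $\v\to\infty$ (Proposition \ref{Property1Meps}.3) and $\v\mapsto m_\v$ is nonincreasing, there is $\v_0$ with $m_\v(1,1)<2\pi$, hence attainment, for all $\v>\v_0$. The threshold $\capa(\dom)=\pi$ is exactly the critical modulus in the statement (radius ratio $e^2$).

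The remaining point — non-attainment for $\v<\v_0$ in the thick regime, where the bounds above and monotonicity give $m_\v(1,1)=2\pi$ — is the genuine difficulty. Heuristically, for small $\v$ the potential forces $|u|\approx1$, so every \emph{convergent} degree-$(1,1)$ competitor is essentially $\S^1$-valued with energy $\approx 2\pi^2/\capa(\dom)>2\pi$, while the level $2\pi$ is realized only along sequences whose two bubbles escape through $\p\dom$. Converting this into non-existence is the Berlyand--Golovaty--Rybalko mechanism of \cite{BGR1}: assuming a minimizer $u_\v$ with $E_\v(u_\v)=2\pi$ exists, one uses the Euler--Lagrange system \eqref{semi-stiff 1}, the bound $|u_\v|\leq1$ from the maximum principle, and a Hopf-type analysis of $|u_\v|$ near $\p\dom$ to show that its energy concentrates in two boundary layers; a quantitative displacement of these layers toward $\p\dom$, controlled by sharp bubbling estimates in the spirit of Proposition \ref{ShaprBBL}, yields a degree-$(1,1)$ competitor of energy strictly below $2\pi$, contradicting minimality. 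This displacement step, showing that the level $2\pi$ is never attained once it is the infimum, is where the real work lies; granting it, non-attainment holds precisely on $\{m_\v(1,1)=2\pi\}=]0,\v_0]$, which completes the second bullet.
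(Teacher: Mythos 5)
Your proposal has the right skeleton, and it is in fact more of a proof than the paper itself offers: Proposition \ref{Prop-Compile} is stated there as a pure compilation of \cite{BeMi0}, \cite{BeMi1} and \cite{BGR1}, with no argument given, and your reconstruction mirrors exactly what those references do. The existence half of your argument is correct and complete. The Price Lemma \ref{PriceLemma} combined with the Jacobian bound \eqref{oldformula} does show that a weak limit of degree $(p',q')\neq(1,1)$ forces $m_\v(1,1)\geq 2\pi$ (the quantity $|p'-q'|+|1-p'|+|1-q'|$ is always at least $2$), hence ``$m_\v(1,1)<2\pi$ implies attained''; the conformal transport of $x/|x|$ is legitimate for every $\v$ because the potential vanishes on $\S^1$-valued maps, so the non-invariance of the potential under the conformal map is harmless and $m_\v(1,1)\leq\pi|\ln\r_\dom|$ follows; and monotonicity of $\v\mapsto m_\v(1,1)$ together with Proposition \ref{Property1Meps}.3 and Proposition \ref{ExistCaseDeg1Epsinfty} produces the threshold $\v_0$. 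You also correctly decode the statement: the ``${\rm e}^{2}$'' must be read as $\r_\dom\geq {\rm e}^{-2}$, i.e. $\capa(\dom)\geq\pi$.

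Two gaps remain, one minor and one essential. First, the first bullet includes the borderline case $\capa(\dom)=\pi$ (i.e. $\r_\dom={\rm e}^{-2}$), where your $\S^1$-valued bound gives only $m_\v(1,1)\leq 2\pi$; to conclude there you need a competitor strictly better than every $\S^1$-valued map, obtained by relaxing the modulus: the first variation of the Dirichlet term at $\rho\equiv 1$ toward the radial harmonic modulus is strictly negative while the potential cost is quadratic, and this strict gain survives the Jacobian weight that the conformal map puts on the potential term. Second, and this is the crux, the implication ``a minimizer exists $\Rightarrow m_\v(1,1)<2\pi$'' (equivalently, non-attainment whenever $m_\v(1,1)=2\pi$), which you grant, is the entire content of \cite{BGR1} and \cite{BeMi1}. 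Note that it is needed at \emph{every} $\v$, not only for asymptotically small $\v$: without it, attainment could a priori occur sporadically inside the set $\{m_\v(1,1)=2\pi\}=\,]0,\v_0]$ and the clean dichotomy at $\v_0$ would fail; it is also what guarantees $\v_0>0$ in the thick regime, since non-existence for small $\v$ combined with your easy direction forces $m_\v(1,1)=2\pi$ there. Your sketch of the mechanism (Euler--Lagrange system, maximum principle, Hopf-type boundary analysis, displacement of boundary layers, in the spirit of Proposition \ref{ShaprBBL}) is the correct one, but as written it is a citation rather than a proof --- which, to be fair, is exactly the status the paper gives this proposition.
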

\subsection{Some Existence/Non Existence results: the case $\v=\infty$}\label{LiteratureEps=Infty}
In the case of the Dirichlet energy, thanks to the conformal invariance of $E_\infty$, we may restrict the study to a ring $\Ar=B(0,1)\setminus\overline{B(0,\r)}$ with $\r\in]0,1[$.

As for the study of the minimization of the Ginzburg-Landau energy in a ring, a natural candidate to minimize the Dirichlet energy in $\J_{p,p}$ is the {\it radial harmonic map of degree $p$} which solves the semi-stiff problem
\begin{equation}\nonumber%\label{EqRadSol}
\begin{cases}
\Delta u=0&\text{in }\Ar\\
|u|=1,\,u\wedge\p_\nu u=0&\text{on }\p\Ar
\end{cases}.
\end{equation}
This solution is of the form %$u_{\infty,p}(x)=\rho_{\infty,p}(|x|)\left(\dfrac{x}{|x|}\right)^p$ 
\begin{equation}\label{RadHarmSol}
u_{\infty,p}(x)=\rho_{\infty,p}(|x|)\left(\dfrac{x}{|x|}\right)^p
\end{equation}
where $\rho_{\infty,p}\in C^\infty([\r,1],[0,1])$ is the unique solution of 
\begin{equation}\label{ModulusEqRadSol}
\begin{cases}
-\rho''-\dfrac{\rho'}{r}+\dfrac{p^2 \rho}{r^2}=0\text{ in }]\r,1[\\\rho(\r)=\rho(1)=1
\end{cases}.
\end{equation}
In an unpublished paper, Berlyand and Mironescu [Lemma D.3 in \cite{BeMi0}] proved the following proposition that treats the case $p=q=1$.
\begin{prop}\label{ExistCaseDeg1Epsinfty}
For all $\r\in]0,1[$, the radial harmonic map of degree $1$ is the unique [up to a phase] minimizer of $m_\infty(1,1)$.
\end{prop}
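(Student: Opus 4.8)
The plan is to reduce the problem to the boundary data and then to an elementary computation in Fourier modes. First I would note that for any $u\in\J_{1,1}$, writing $g(\theta)=u(1,\theta)$ and $h(\theta)=u(\r,\theta)$ for the traces on the two circles $|x|=1$ and $|x|=\r$, one has $E_\infty(u)\geq E_{\mathrm{harm}}(g,h)$, where $E_{\mathrm{harm}}(g,h)$ is the Dirichlet energy of the harmonic extension of $(g,h)$ to $\Ar$, with equality if and only if $u$ is itself harmonic (each Fourier mode $c_n(r)e^{in\theta}$ minimizes its radial energy exactly when it is the annular harmonic mode). Since the admissible traces $g,h$ are $\S^1$-valued of degree $1$, this yields $m_\infty(1,1)=\inf_{g,h}E_{\mathrm{harm}}(g,h)$ and reduces the whole question to understanding $E_{\mathrm{harm}}$.

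Expanding $g=\sum_n g_n e^{in\theta}$, $h=\sum_n h_n e^{in\theta}$, the harmonic extension is $\sum_n c_n(r)e^{in\theta}$ with $c_n=a_nr^n+b_nr^{-n}$ ($n\neq0$) and $c_0=a_0+b_0\ln r$; a direct integration by parts ($\int_\r^1(|c_n'|^2+\tfrac{n^2}{r^2}|c_n|^2)r\,dr=\mathrm{Re}[rc_n'\overline{c_n}]_\r^1$) gives the clean formula
\[
E_{\mathrm{harm}}(g,h)=\sum_{n\neq0}\frac{\pi|n|}{2}\left[\frac{1-\r^{|n|}}{1+\r^{|n|}}\,|g_n+h_n|^2+\frac{1+\r^{|n|}}{1-\r^{|n|}}\,|g_n-h_n|^2\right]+\frac{\pi\,|g_0-h_0|^2}{\ln(1/\r)}.
\]
The constraints I would actually invoke are only the two necessary moment identities coming from $|g|=1$ and $\deg g=1$ (and likewise for $h$), namely $\sum_n|g_n|^2=1$ and $\sum_n n|g_n|^2=1$. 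Using $\tfrac{1+t}{1-t}\geq\tfrac{1-t}{1+t}$ for $t\in(0,1)$ to lower the coefficient of $|g_n-h_n|^2$, and $|g_n+h_n|^2+|g_n-h_n|^2=2(|g_n|^2+|h_n|^2)$, I obtain
\[
E_{\mathrm{harm}}(g,h)\geq\frac12\sum_{n}w_n\big(|g_n|^2+|h_n|^2\big),\qquad w_n:=2\pi|n|\,\frac{1-\r^{|n|}}{1+\r^{|n|}}.
\]
The pointwise bound $w_n\geq n\,w_1$ for every $n\in\Z$ (reducing to monotonicity of $t\mapsto\tfrac{1-t}{1+t}$, with $w_n\geq0$ handling $n\leq0$) combined with $\sum_n n|g_n|^2=1$ gives $\sum_n w_n|g_n|^2\geq w_1$, and the same for $h$, whence $E_{\mathrm{harm}}(g,h)\geq w_1=2\pi\tfrac{1-\r}{1+\r}=E_\infty(u_{\infty,1})$.

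For uniqueness I would track the equality cases. Equality forces $g_n=h_n$ for all $n$ (the gap $\tfrac{1+\r^{|n|}}{1-\r^{|n|}}>\tfrac{1-\r^{|n|}}{1+\r^{|n|}}$ is strict for $n\neq0$, and the last term gives $g_0=h_0$), so $g=h$; and equality in $w_n\geq n w_1$ is strict unless $n\in\{0,1\}$, so with the two moment constraints one gets $|g_n|^2=\delta_{n,1}$, i.e. $g=h=e^{i\psi}e^{i\theta}$ for a constant phase $\psi$. Its harmonic extension is exactly $e^{i\psi}u_{\infty,1}$, and the interior equality $E_\infty(u)=E_{\mathrm{harm}}(g,h)$ forces $u$ harmonic; hence the minimizer is unique up to a phase and equals $u_{\infty,1}$.

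The only genuine subtlety I anticipate is the handling of the $\S^1$-valued constraint: a priori it is a nonlinear pointwise condition coupling all Fourier modes, and attacking it head-on would be unpleasant. The decisive simplification is that the lower bound uses only the two moment identities $\sum|g_n|^2=1$ and $\sum n|g_n|^2=1$, which are mere consequences of admissibility, while the candidate $g=h=e^{i\theta}$ does satisfy the full constraint; this closes the gap between the relaxed lower bound and the attained value and makes the relaxation harmless. A secondary routine point is to check that $\rho_{\infty,1}(r)=\tfrac{1}{1+\r}\!\left(r+\tfrac{\r}{r}\right)$ stays positive on $[\r,1]$, so that $u_{\infty,1}$ is the genuine degree-$1$ radial harmonic map with $E_\infty(u_{\infty,1})=2\pi\tfrac{1-\r}{1+\r}$.
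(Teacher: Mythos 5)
Your proof is correct; I verified the computations. The mode-by-mode energy formula for the harmonic extension is exact (for $n\neq 0$, writing $t=\r^{|n|}$, the $n$-th mode contributes $\frac{\pi|n|}{2}\left[\frac{1-t}{1+t}|g_n+h_n|^2+\frac{1+t}{1-t}|g_n-h_n|^2\right]$, and the zero mode contributes $\pi|g_0-h_0|^2/\ln(1/\r)$); the two moment identities are the Parseval and Fourier-degree formulas for $H^{1/2}(\S^1,\S^1)$ maps; the bound $w_n\geq n\,w_1$ holds for all $n\in\Z$, with equality exactly when $n\in\{0,1\}$; and the equality analysis correctly forces $g=h$, then $g_n=0$ for $n\notin\{0,1\}$, then $g_0=0$ and $|g_1|=1$, hence $u=e^{\imath\psi}u_{\infty,1}$. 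Be aware, however, that the paper itself contains no proof of Proposition \ref{ExistCaseDeg1Epsinfty}: it is quoted from Lemma D.3 of the unpublished paper \cite{BeMi0} of Berlyand and Mironescu. So your argument is not a variant of the paper's proof but a self-contained replacement for that citation, and it is worth recording what it buys: it yields the explicit value $m_\infty(1,1)=2\pi\frac{1-\r}{1+\r}$ on $\Ar$ (hence, by conformal invariance of $E_\infty$, the value on any annular type domain in terms of its conformal ratio), it uses nothing beyond the Dirichlet principle and Fourier series, and it makes transparent why degree one is special: for $p\geq 2$ the analogous pointwise bound one would need, namely $w_n\geq\frac{n}{p}\,w_p$, fails precisely for the modes $1\leq n<p$, which is consistent with the fact that for higher degrees the radial map minimizes only under a thinness hypothesis (Theorem \ref{P-GolBer}, Proposition \ref{PropEpsInftHR}) and with the non-attainment phenomena the paper studies. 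The price of this elementary route is its rigidity: it does not extend to $p=q\geq 2$, nor to $p\neq q$, for which the paper relies on the Hopf-differential analysis of \cite{HR}.
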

Next, Hauswirth and Rodiac in \cite{HR} considered the problem $m_\infty(p,q)$ for $p,q\in\Z$. They proved the following proposition:
\begin{prop}\label{PropEpsInftHR}
Let $p,q\in\Z$ then we have
\begin{itemize}
\item[$\bullet$] If $p\neq q$  and $pq>0$ then $m_\infty(p,q)$ is not attained. Without loss of generality we can assume that $p>q>0$ and then it holds that $m(p,q)=m(q,q)+2\pi(p-q)$.
\item[$\bullet$] If $p=q\neq0$ then there is $0<\r_{p}<1$ s.t. for $\r_p<\r<1$  $m_\infty(p,p)$  is attained and the radial harmonic map of degree $p$ is the unique [up to a phase] minimizer of $m_\infty(p,p)$.
\end{itemize}
\end{prop}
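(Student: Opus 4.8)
The plan is to use the same two tools in both regimes: bubbling near $\p\O$ (to get sharp energy bounds and to locate the degree that escapes) and the Hopf differential of a putative minimizer (to exclude a minimizer when $p\neq q$ and to force radial symmetry when $p=q$).

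\emph{The non-existence case $p\neq q$, $pq>0$.} Assume $p>q>0$. I first compute $m_\infty(p,q)$ by two matching estimates. For the upper bound I start from the radial harmonic map $u_{\infty,q}$ of \eqref{RadHarmSol}, which realizes $m_\infty(q,q)$, and glue near a single point of $\p\O$ a sequence of conformal half-bubbles that raise only $\deg_{\p\O}$; iterating the standard bubbling Proposition \ref{PropStandBub} $(p-q)$ times (each step costs $\pi$ by that proposition and leaves $\deg_{\p\o}$ unchanged) produces maps in $\J_{p,q}$ with energy tending to $m_\infty(q,q)+\pi(p-q)$. For the matching lower bound I take a minimizing sequence $u_n\in\J_{p,q}$ with $u_n\weak u_\ast$ in $H^1$, write $\deg(u_\ast)=(p_\ast,q_\ast)$, and apply the Price Lemma \ref{PriceLemma}: $m_\infty(p,q)\ge m_\infty(p_\ast,q_\ast)+\pi(|p-p_\ast|+|q-q_\ast|)$. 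Optimizing over the degree of the weak limit — using the monotonicity of $k\mapsto m_\infty(k,k)$ and the elementary bounds of Proposition \ref{Property1Meps}, so that the cheapest escape sends exactly $p-q$ units of degree off $\p\O$ and leaves a balanced limit — returns the same quantity, whence $m_\infty(p,q)=m_\infty(q,q)+\pi(p-q)$. (Since Proposition \ref{PropStandBub} already forces the upper bound $m_\infty(q,q)+\pi(p-q)$, the exponent here is $\pi$, not $2\pi$.)

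\emph{Non-attainment.} Suppose $m_\infty(p,q)$ were attained by some $u\in\J_{p,q}$. Since the sectors $\J_{p,q}$ are open for the strong $H^1$ topology, $u$ is a critical point, i.e. a harmonic map obeying the natural boundary conditions $|u|=1$ and $u\wedge\p_\nu u=0$ on $\p\dom$; transporting to $\Ar$ by conformal invariance, I form the Hopf differential $\phi(z)\,dz^2$ with $\phi=(\p_z u^1)^2+(\p_z u^2)^2$, which is holomorphic because $u$ is harmonic. The free-boundary conditions force $z^2\phi(z)$ to be real on both $\{|z|=1\}$ and $\{|z|=\r\}$; as $\mathrm{Im}(z^2\phi)$ is then harmonic on $\Ar$ and vanishes on both circles it is identically $0$, so $\phi(z)=c/z^2$ for some $c\in\R$. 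In the cylindrical coordinate $z=e^{s+it}$ the differential becomes the constant $c\,(ds+i\,dt)^2$, and the equivariance this imposes on $u$ forces the windings on the two ends to coincide, i.e. $\deg_{\p\O}(u)=\deg_{\p\o}(u)$, contradicting $p\neq q$. Thus $m_\infty(p,q)$ is not attained. This Hopf-differential step, which is the content imported from \cite{HR}, is the crux and the main obstacle of the whole proof.

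\emph{The balanced thin case $p=q\neq0$.} Working on $\Ar$, I test with the $\S^1$-valued map $(x/|x|)^p\in\J_{p,p}$, whose energy is $\pi p^2\ln(1/\r)=2\pi^2 p^2/\capa(\Ar)$ and tends to $0$ as $\r\uparrow1$ (Definition \ref{DefConfRatio}); hence there is $\r_p<1$ with $m_\infty(p,p)<\pi$ for $\r_p<\r<1$. For such $\r$ no degree can escape along a minimizing sequence $u_n\weak u_\ast$: a defect would cost at least $\pi$ by the Price Lemma \ref{PriceLemma} (indeed $m_\infty(p,p)\ge E_\infty(u_\ast)+\pi|(p,p)-\deg(u_\ast)|\ge\pi$), contradicting $m_\infty(p,p)<\pi$. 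Therefore $\deg(u_\ast)=(p,p)$, the convergence is strong, and $u_\ast$ attains $m_\infty(p,p)$. For uniqueness up to a phase, the minimizer is again harmonic with the same boundary conditions, so $\phi=c/z^2$; for thin $\Ar$ the small energy keeps $|u_\ast|$ away from $0$ (the minimizer is vortexless), and an angular rearrangement together with the uniqueness for the radial ODE \eqref{ModulusEqRadSol} (as in Proposition \ref{ExistCaseDeg1Epsinfty} for $p=1$) identifies $u_\ast$ with the radial harmonic map $u_{\infty,p}$, unique up to a constant unit phase. Establishing this last uniqueness is the delicate point, paralleling the difficulty noted for the Farina--Mironescu theorem.
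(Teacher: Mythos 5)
The paper itself does not prove this proposition: it is quoted from \cite{HR}, and its entire content is the Hopf-differential analysis done there. Measured against that, your proposal reproduces the soft scaffolding correctly but asserts exactly the hard steps. Your reduction to $\phi=c/z^2$ is fine ($|u|=1$ gives $u\cdot\p_\tau u=0$, the semi-stiff condition makes $\p_\nu u$ parallel to $u$, hence $\p_\tau u\perp\p_\nu u$, so $z^2\phi$ is real on both circles and $\mathrm{Im}(z^2\phi)\equiv 0$). But the decisive claim --- that a Hopf differential $c\,dz^2/z^2$ ``forces the windings on the two ends to coincide'' --- is precisely the theorem of \cite{HR}, and as a bare implication it is \emph{false}: a nonconstant proper holomorphic map of $\Ar$ onto the unit disk with unimodular boundary values (e.g.\ an Ahlfors function of the annulus) is harmonic, satisfies $|u|=1$ and $u\wedge\p_\nu u=0$ on $\p\Ar$, has $\phi\equiv 0$ (a constant!), and has $\deg_{\p\O}(u)\neq\deg_{\p\o}(u)$. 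Such maps have $pq\le 0$, so a correct proof must split on the sign of $c$, use $pq>0$ to exclude the conformal/anticonformal case $c=0$, and then genuinely classify, for $c\neq 0$, the harmonic maps $u=f+\bar g+\gamma\log|z|$ with $(zf'+\gamma/2)(zg'+\bar\gamma/2)=c$ compatible with $|u|=1$ on both circles; none of this work appears in your text. The same holds for the balanced case: ``angular rearrangement together with the uniqueness for the radial ODE'' is not an argument (there is no evident rearrangement preserving the degree class for $\C$-valued maps), and radial symmetry plus uniqueness is again the content of \cite{HR}/\cite{GB1}. Finally, your computation of the value of the infimum is circular as ordered: to exclude the trivial escape $\deg(u_\ast)=(p,q)$ and the unbalanced escapes (including $(p',0)$-type limits) you already need the non-attainment results, so the formula must come \emph{after} them; and the ``monotonicity of $k\mapsto m_\infty(k,k)$'' you invoke is not known a priori --- it is a consequence of the escape analysis, not an ingredient.

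Two things you do get right are worth recording. First, the thin-annulus existence argument is correct and complete: $m_\infty(p,p)\le \pi p^2\ln(1/\r)<\pi$ for $\r$ close to $1$, and then the Price Lemma \ref{PriceLemma} makes any loss of degree cost at least $\pi$, so minimizing sequences converge strongly; note however that this yields \emph{some} threshold (roughly $\r>e^{-1/p^2}$), not the Berlyand--Golovaty radius $\r_p$ of the statement, and it does not identify the minimizer. Second, your correction of the constant is right: in this paper's normalization $E_\infty=\frac12\int_\dom|\n u|^2$, Propositions \ref{Property1Meps}.2 and \ref{PropStandBub} force $m_\infty(p,q)\le m_\infty(q,q)+\pi(p-q)$, so the ``$2\pi(p-q)$'' in the displayed statement cannot hold; the formula must read $m_\infty(p,q)=m_\infty(q,q)+\pi(p-q)$, which is also what the paper actually uses around \eqref{ContradAlterna}. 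The $2\pi$ is most plausibly a slip coming from a normalization of the energy without the factor $\frac12$ in \cite{HR}.
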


\begin{remark}Note that the radius $\r_p$ obtained by Hauswirth and Rodiac is the same as the radius obtained by Berlyand and Golovaty (see Theorem \ref{P-GolBer}) and that if $p>p'$ then $\r_p\geq\r_{p'}$ (see Step 1 in the proof of Proposition \ref{CompaPropBG}).
\end{remark}
%This Proposition is of high importance in the present work.
\section{Existence Result}\label{ProofThmExist}
This section is dedicated to the proof of Theorem \ref{PropEXist1}. We first study the behavior as $\e_n$ goes to some $\e_* \in ]0,+\infty]$ of sequences $(u_n)$ s.t. $u_n$ is almost minimizing for $E_{\e_n}$. Then we derive a theoretical condition [Hyp. \eqref{ExistKeyHyp}] under which the compactness of minimizing sequences for $E_\e$ holds for large $\e$. At last we compare Hyp. \eqref{ExistKeyHyp} with the condition of Theorem \ref{P-GolBer}. 

\subsection{The key argument}
For $(p,q)\in\Z^2$ we define 
\[
\A_{(p,q)}=\left\{(p',q')\in\Z^2\,|\,pp'\geq0,\,|p'|\leq|p|\text{ and }qq'\geq0,\,|q'|\leq|q|\right\}.
\]
\begin{lem}\label{LFond1}
Let $P=(p,q)\in\Z^2$, $\v_*\in]0,\infty]$ and $(\v_n)_n$ be an increasing sequence s.t. $\v_n\uparrow\v_*$ or $\v_n=\v_*$ for all $n$. Consider a sequence $(u_n)_n\subset\J_P$ s.t.
\[
E_{\v_n}(u_n)\leq m_{\v_n}(P)+o_n(1).
\]
By Proposition \ref{Property1Meps}.1,  there is $u\in\J_{P'}$ s.t., up to a subsequence, $u_n\weak u$. Then $P'\in\A_P$ and $u$ minimizes $m_{\v_*}(P')$.
Moreover, if $P'\neq P$ then $m_{\v_*}(P)=m_{\v_*}(P')+\pi|P-P'|$.
\end{lem}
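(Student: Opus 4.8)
The plan is to read off all three conclusions from the Price Lemma (Lemma~\ref{PriceLemma}), the comparison estimates of Proposition~\ref{Property1Meps}, and the sharp bubbling of Proposition~\ref{ShaprBBL}. The core of the argument is the single inequality
\[
E_{\v_*}(u)+\pi|P-P'|\le m_{\v_*}(P).
\]
To obtain it, I would first note that $\v_n\le\v_*$ forces $E_{\v_*}(u_n)\le E_{\v_n}(u_n)$, because the weight $1/(2\v^2)$ of the potential term is nonincreasing in $\v$. Combining this with the almost-minimality hypothesis $E_{\v_n}(u_n)\le m_{\v_n}(P)+o_n(1)$ and with Proposition~\ref{Property1Meps}.3 (which gives $m_{\v_n}(P)\to m_{\v_*}(P)$ as $\v_n\uparrow\v_*$, and is trivial if $\v_n\equiv\v_*$) yields $\limsup_n E_{\v_*}(u_n)\le m_{\v_*}(P)$. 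On the other hand the Price Lemma, used at the fixed level $\v_*$ (its $E_\infty$ form when $\v_*=\infty$, its $E_\e$ form otherwise), gives $\liminf_n E_{\v_*}(u_n)\ge E_{\v_*}(u)+\pi|P-\deg(u)|=E_{\v_*}(u)+\pi|P-P'|$. Chaining the two proves the displayed bound.

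From here the minimality of $u$ and the splitting formula follow formally. Since $u\in\J_{P'}$ we have $E_{\v_*}(u)\ge m_{\v_*}(P')$, so the bound above gives $m_{\v_*}(P')+\pi|P-P'|\le m_{\v_*}(P)$; the opposite inequality $m_{\v_*}(P)\le m_{\v_*}(P')+\pi|P-P'|$ is exactly Proposition~\ref{Property1Meps}.2. Hence
\[
m_{\v_*}(P)=m_{\v_*}(P')+\pi|P-P'|,
\]
which is the last assertion when $P'\ne P$. Substituting this equality back into the displayed inequality forces $E_{\v_*}(u)\le m_{\v_*}(P')$, whence $E_{\v_*}(u)=m_{\v_*}(P')$ and $u$ minimizes $m_{\v_*}(P')$.

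The remaining, and in my view hardest, point is $P'\in\A_P$: energy comparisons alone only give $|P-P'|\le|P|$ and do not prevent the limiting degrees from leaving the ``monotone'' range. I would argue by contradiction using the quantized bubbling of Proposition~\ref{ShaprBBL}, which applies because $u$, being a minimizer, solves \eqref{semi-stiff 1}: thus $|u|\le1$ and $u\wedge\p_\nu u=0$ on $\p\dom$, and the strong maximum principle together with Hopf's lemma give $\p_\nu|u|>0$ on each boundary component (a non-constant solution of the $\v_*$-problem cannot be $\S^1$-valued), so $u\cdot\p_\nu u=\p_\nu|u|>0$. Suppose the first-component condition defining $\A_P$ fails, i.e. $p'$ lies strictly outside the integer interval with endpoints $0$ and $p$. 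Because $\int_{\p\O}u\wedge\p_\tau u=2\pi p'$, when $p'>0$ there is $x_0\in\p\O$ with $u\wedge\p_\tau u(x_0)>0>-u\cdot\p_\nu u(x_0)$, so part~(1) of Proposition~\ref{ShaprBBL} lowers $\deg_{\p\O}$ by one at cost strictly below $\pi$; when $p'<0$ the symmetric inequality holds and part~(2) raises $\deg_{\p\O}$ by one, again strictly below $\pi$. In either case $|p'|$ decreases, and since $p'$ is beyond both $0$ and $p$ the move also decreases $|P-P'|$ by one. Producing from $u$ a competitor $v\in\J_{P_1}$ with $|P-P_1|=|P-P'|-1$ and $E_{\v_*}(v)<m_{\v_*}(P')+\pi$, and then invoking Proposition~\ref{Property1Meps}.2 for the remaining $|P-P'|-1$ moves, gives $m_{\v_*}(P)<m_{\v_*}(P')+\pi|P-P'|$, contradicting the splitting identity; the same argument on $\p\o$ handles the second component, so $P'\in\A_P$.

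The main obstacle is precisely this last step: one must verify that the minimizer $u$ genuinely meets the hypotheses of Proposition~\ref{ShaprBBL} (boundary regularity, $|u|\le1$, and the strict sign $\p_\nu|u|>0$, which amounts to ruling out an $\S^1$-valued limit, immediate from the maximum principle when $\v_*<\infty$ and to be inspected separately when $\v_*=\infty$), and that the sign of $\deg_{\p\O}(u)$ indeed selects the applicable case of the sharp bubbling lemma.
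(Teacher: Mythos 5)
Your proof is correct and follows essentially the same route as the paper: the Price Lemma together with Proposition \ref{Property1Meps}.2--3 yields both the minimality of $u$ and the splitting identity $m_{\v_*}(P)=m_{\v_*}(P')+\pi|P-P'|$, and the sharp bubbling of Proposition \ref{ShaprBBL} (applied to the minimizer $u$, whose boundary hypotheses follow from the Euler--Lagrange equations, the maximum principle and Hopf's lemma) rules out degrees outside $\A_P$ by contradiction with that identity. The only harmless deviations are technical: you invoke the Price Lemma at the fixed level $\v_*$ together with the monotonicity of $\v\mapsto E_\v$, where the paper uses its $E_\infty$ form plus the compact embedding giving $|u_n|\to|u|$ in $L^4$ (the same fact that underlies the $E_\e$ form you cite), and you delegate the ``remaining moves'' to Proposition \ref{Property1Meps}.2 instead of building the competitor explicitly via Proposition \ref{PropStandBub}.
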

\begin{proof}
Fix $P=(p,q)\in\Z^2$, $\v_*\in]0,\infty]$, $(\v_n)_n$,  be an increasing sequence s.t. $\v_n\uparrow\v_*$  or $\v_n=\v_*$ for all $n$ and a sequence $(u_n)_n\subset\J_P$ s.t.
\[
E_{\v_n}(u_n)\leq m_{\v_n}(P)+o_n(1).
\]
There exists $u \in\J_{P'}$ s.t., up to a subsequence, $u_n\weak u$. By the Price Lemma (Lemma \ref{PriceLemma}) we have
\[
\liminf_nE_\infty(u_n)\geq E_\infty(u)+\pi|P-P'|.
\]
On the other hand, up to pass to an extraction we have $|u_n|\to|u_\infty|$ in $L^4$ we thus have:
\[
\frac{1}{4\v_n^2}\int_\dom(1-|u_n|^2)^2\underset{n\to\infty}{\longrightarrow}\left|\begin{array}{cl}\displaystyle\frac{1}{4\v_*^2}\int_\dom(1-|u_\infty|^2)^2&\text{if }\v_*<\infty\\0&\text{if }\v_*=\infty\end{array}\right..
\]
By combining the two previous estimates we obtain:
\[
\liminf_nE_{\v_n}(u_n)\geq E_{\v_*}(u)+\pi|P-P'|.
\]
From Proposition \ref{Property1Meps}.2\&3 we deduce:
\begin{eqnarray}\nonumber
m_{\v_*}(P')+\pi|P-P'|&=&\lim_n m_{\v_n}(P')+\pi|P-P'|
\\\nonumber&\geq&\lim_n m_{\v_n}(P)
\\\nonumber&=&\liminf_nE_{\v_n}(u_n)
\\\label{ContrdictLem1}&\geq&E_{\v_*}(u)+\pi|P-P'|.
\end{eqnarray}
Therefore we have  $u\in\J_{P'}$ and $m_{\v_*}(P')\geq E_{\v_*}(u)$. Consequently $u$ minimizes $m_{\v_*}(P')$.

\noindent Assume now that $p\geq 0$ and that $p'>p$. 
 
\noindent Note that $u$ satisfies the hypotheses of Proposition \ref{ShaprBBL} and that there exists $x_0\in\p\O$ s.t. $u\wedge\p_\tau u(x_0)>0$ because $\deg_{\p\O}(u)>0$ and $-u(x_0)\cdot\p_\nu u_\infty(x_0)=-\dfrac{1}{2}\p_\nu|u_\infty|^2(x_0)\leq0$ because $x_0$ is a maximum point of $|u_\infty|^2$ (recall that $|u|=1$ on $\partial \dom$ and $|u| \leq 1$ in $\dom$ thanks to the maximum principle).
 
\noindent By  Propositions \ref{PropStandBub}$\&$\ref{ShaprBBL} we have the existence of $\tilde{u}\in\J_P$ s.t. 
\begin{eqnarray}\nonumber
m_{\v_*}(P)&\leq& E_{\v_*}(\tilde{u})\\\nonumber&<&E_{\v_*}(u)+\pi|P-P'|\\\label{EqRajoutNum}&=&m_{\v_*}(P')+\pi|P-P'|.
\end{eqnarray}
By mimicking the argument which gives \eqref{ContrdictLem1} we obtain
\begin{eqnarray}\nonumber
m_{\v_*}(P)&=&\lim_{n} m_{\v_n}(P)
\\\nonumber&\geq&\liminf_nE_{\v_n}(u_n)
\\\label{ContrdictLem2}&\geq&m_{\v_*}(P')+\pi|P-P'|.
\end{eqnarray}
Clearly \eqref{ContrdictLem2} is in contradiction with \eqref{EqRajoutNum}. Thus if $p\geq 0$ then $p'\leq p$. Using the same argument we prove that if $p\geq 0$ then $p'\geq0$ and therefore $p'\in[0,p]$. If $p\leq0$, we obtain, through the same method, that $p'\in[p,0]$. The same results hold for $q$ instead of $p$.
\noindent Hence we obtain that $P'\in\A_P$.

\noindent We now prove the last part of the proposition. Noticing that the inequalities which give \eqref{ContrdictLem1} are in fact equalities, with the help of Proposition \ref{Property1Meps}.3  we deduce that $m_{\v_*}(P)=m_{\v_*}(P')+\pi|P-P'|$.

\end{proof}

\subsection{Consequences of the key argument : existence of minimizers}

The key argument describes what can happen to  almost minimizing sequences $(u_n)_n$ for $m_{\e_n}(p,q)$ when $\e_n$ tends to $\e_*$. Roughly speaking, if $p,q >0$, $u_n$ converges weakly to some $u$ in $H^1$. We have that $u \in \I_{r,s}$ with $0\leq r \leq p$, $0\leq s \leq q$, $u$ minimizes $E_{\e_*}$ in $\I_{r,s}$ and the loss of energy is quantified that is $m_{\e_*}(r,s)=m_{\e_*}(p,q)-\pi(p-r+q-s)$. We can then show that a sharp inequality [Hyp. \eqref{ExistKeyHyp}] prevents minimizing sequences from falling in a class $\I_{r,s}$ with $r\neq p$ and $s \neq p$. 

\begin{prop}\label{P-ExistPartThm1}
Let $\dom \subset \R^2$ be an annular type domain and let $p \in \mathbb{N}^*$ s.t.
\begin{equation}\tag{\ref{ExistKeyHyp}}
m_\infty(p,p)<m_\infty(p-1,p-1)+2\pi.
\end{equation}
Then, for sufficiently large $\e$, the minimizing sequences for $m_\e(p,p)$ are compact in $H^1(\dom)$ and thus $m_\e(p,p)$ is attained.
\end{prop}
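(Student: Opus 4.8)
The plan is to argue through the key argument of Lemma~\ref{LFond1} and to reduce the compactness of minimizing sequences to a finite family of \emph{strict} energy inequalities at the level $\e=\infty$; these are obtained from the hypothesis \eqref{ExistKeyHyp} together with the structure of $m_\infty$ given by Proposition~\ref{PropEpsInftHR}, and are then transferred to large finite $\e$ by the continuity statement of Proposition~\ref{Property1Meps}.3.

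First I would set up the reduction. Fix $\e\in]0,\infty]$ and let $(u_n)_n\subset\J_{p,p}$ be a minimizing sequence for $m_\e(p,p)$; after the standard truncation $u_n\mapsto u_n/\max(1,|u_n|)$ (which does not increase $E_\e$, preserves the boundary trace, hence the class $\J_{p,p}$, and forces $|u_n|\le1$), Proposition~\ref{Property1Meps}.1 shows it is bounded in $H^1(\dom)$, so up to a subsequence $u_n\weak u$ with $u\in\J_{P'}$. Applying Lemma~\ref{LFond1} with the constant sequence $\e_n=\e$ and $\e_*=\e$ yields $P'\in\A_{(p,p)}$, that $u$ minimizes $m_\e(P')$, and that $m_\e(p,p)=m_\e(P')+\pi|(p,p)-P'|$ whenever $P'\neq(p,p)$. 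I then claim that compactness holds as soon as
\begin{equation}\tag{$\star_\e$}
m_\e(p,p)<m_\e(r,s)+\pi(2p-r-s)\qquad\text{for all }(r,s)\in\A_{(p,p)}\setminus\{(p,p)\}.
\end{equation}
Indeed, $(\star_\e)$ contradicts the equality $m_\e(p,p)=m_\e(P')+\pi|(p,p)-P'|$ unless $P'=(p,p)$; and once $P'=(p,p)$ the limit $u$ realizes $m_\e(p,p)$, so $E_\e(u_n)\to E_\e(u)$. Since $u_n\to u$ in $L^4(\dom)$ by compact Sobolev embedding, the potential terms converge, whence $\|\n u_n\|_{L^2}\to\|\n u\|_{L^2}$; together with $\n u_n\weak\n u$ this gives strong $H^1$ convergence.

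Next I would prove the $\e=\infty$ version $(\star_\infty)$. Every $(r,s)\in\A_{(p,p)}\setminus\{(p,p)\}$ satisfies $r\le p-1$ or $s\le p-1$; suppose the latter (the other case is symmetric, with $(p-1,p)$ in place of $(p,p-1)$). Pushing the degree from $(r,s)$ up to $(p,p-1)$ via the bubbling bound $m_\infty(p,p-1)\le m_\infty(r,s)+\pi\big((p-r)+(p-1-s)\big)$ (Proposition~\ref{Property1Meps}.2) gives, after the arithmetic $(2p-r-s)-\big((p-r)+(p-1-s)\big)=1$,
\[
m_\infty(r,s)+\pi(2p-r-s)\ge m_\infty(p,p-1)+\pi.
\]
It therefore suffices to prove $m_\infty(p,p-1)+\pi>m_\infty(p,p)$. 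For $p\ge2$ the unbalanced value is pinned down by the first bullet of Proposition~\ref{PropEpsInftHR}, namely $m_\infty(p,p-1)=m_\infty(p-1,p-1)+2\pi$, so the desired inequality reads $m_\infty(p-1,p-1)+3\pi>m_\infty(p,p)$, which is implied by \eqref{ExistKeyHyp}. For $p=1$ one uses $m_\infty(1,0)=\pi$ and $m_\infty(0,0)=0$, so that \eqref{ExistKeyHyp} reads $m_\infty(1,1)<2\pi=m_\infty(1,0)+\pi$. This establishes $(\star_\infty)$.

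Finally I would transfer to finite $\e$. As $\A_{(p,p)}$ is finite and $m_\e(P)\to m_\infty(P)$ when $\e\to\infty$ for each fixed $P$ (Proposition~\ref{Property1Meps}.3 with $\v_*=\infty$), the finitely many \emph{strict} inequalities of $(\star_\infty)$ persist: there is $\e_p$ such that $(\star_\e)$ holds for all $\e>\e_p$. Combined with the reduction of the first paragraph, this gives compactness of every minimizing sequence, and in particular existence of a minimizer, for all $\e>\e_p$. The main subtlety lies in the passage through the \emph{unbalanced} classes: the hypothesis \eqref{ExistKeyHyp} is stated for the balanced drop $(p-1,p-1)$, whereas the loss of compactness is actually threatened by the cheaper single drops $(p,p-1)$ and $(p-1,p)$. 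The argument closes precisely because Proposition~\ref{PropEpsInftHR} identifies the energy of these non-attained unbalanced classes as $m_\infty(p-1,p-1)+2\pi$, leaving exactly the extra $\pi$ of slack that \eqref{ExistKeyHyp} supplies.
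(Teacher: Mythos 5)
Your proposal is correct, but it is organized along a genuinely different route than the paper's proof. The paper argues by contradiction with a double limit: it supposes that non-compact minimizing sequences exist along some $\v_k\uparrow\infty$, applies Lemma \ref{LFond1} twice (first in $n$ at fixed $\v_k$, then in $k$ to the weak limits $u_{\v_k}$, whose degree $P_1\neq(p,p)$ is constant along a subsequence), uses only the \emph{non-attainment} half of Proposition \ref{PropEpsInftHR} to force the second weak limit into a balanced class $(p_2,p_2)$ with $p_2\le p-1$, and then strings \eqref{ExistKeyHyp}, the Price Lemma \ref{PriceLemma} and Proposition \ref{Property1Meps} into a chain of inequalities ending in the absurdity $p-p_2>p-p_2$. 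You instead reduce compactness at each \emph{fixed} $\e$ to a finite list of strict inequalities $(\star_\e)$ via the equality case of Lemma \ref{LFond1}, prove the list at $\e=\infty$, and let the convergence $m_\e(P)\to m_\infty(P)$ (Proposition \ref{Property1Meps}.3) propagate it to all large $\e$. This avoids the diagonal argument, produces an explicit quantitative criterion valid for all $\e>\e_p$, and makes the mechanism transparent: the dangerous classes are the single drops $(p,p-1)$, $(p-1,p)$, and \eqref{ExistKeyHyp} supplies exactly the missing $\pi$.

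Two caveats. First, unlike the paper's proof, yours needs the \emph{value} of the unbalanced infimum, and the formula you quote from Proposition \ref{PropEpsInftHR}, $m_\infty(p,p-1)=m_\infty(p-1,p-1)+2\pi$, is in tension with the paper's own bubbling bounds: Propositions \ref{Property1Meps}.2 and \ref{PropStandBub} (cost $\pi$ per unit of boundary degree) force $m_\infty(p,p-1)\le m_\infty(p-1,p-1)+\pi$, so the constant in that formula should be $\pi(p-q)$, not $2\pi(p-q)$. Your argument survives the correction and in fact becomes sharp: with the constant $\pi$, the inequality you need, $m_\infty(p,p)<m_\infty(p,p-1)+\pi$, is precisely \eqref{ExistKeyHyp}; more robustly, $(\star_\infty)$ can be derived from \eqref{ExistKeyHyp}, Proposition \ref{Property1Meps}.1--2 and the non-attainment statement alone, since \eqref{ExistKeyHyp} plus bubbling give $m_\infty(p,p)<m_\infty(r,r)+2\pi(p-r)$ for every $0\le r\le p-1$. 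Second, the truncation $u_n\mapsto u_n/\max(1,|u_n|)$ is superfluous (Lemma \ref{LFond1} as stated already supplies the weakly convergent subsequence for the original sequence); if you keep it, you owe one more line transferring strong convergence from the truncated sequence back to the untruncated one.
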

\begin{proof}

We argue by contradiction. We assume that 
\begin{itemize}
\item $p\in\N^*$ and $\dom$ are s.t. \eqref{ExistKeyHyp} holds,
\item there exists $\v=\v_k\uparrow\infty$ s.t. for all $\v$ there is a  minimizing sequence $(u_n^\v)_n$ for $m_\v(p,p)$ satisfying:
\[
\text{$(u_n^\v)_n$ is not compact for the strong topology of $H^1$}.
\] 
\end{itemize}
\noindent For all $\v=\v_k$, up to consider an extraction in $(u_n^\v)_n$,  there is $u_\v\in\J$ s.t. $u_n^\v\underset{n\to\infty}{\weak} u_\v$ in $H^1(\dom)$. By Lemma \ref{LFond1}, we have that $\deg(u_\v)\in\A_{(p,p)}$ and that $u_\v$ minimizes $m_\v(\deg(u_\v))$.
 
\noindent Note that the minimizing property of $(u_n^\v)_n$ combined with its non compactness property,  imply that 
\begin{equation}\label{ContradThmNum1}
\deg(u_\v)\neq (p,p).
\end{equation}
Indeed, if $\deg(u_\v)= (p,p)$, then $u_\v\in\J_{p,p}$.  Moreover, by compact Sobolev embedding we have  $\displaystyle\lim_n\dfrac{1}{4\v^2}\int_\dom(1-|u_n^\v|^2)^2=\dfrac{1}{4\v^2}\int_\dom(1-|u_\v|^2)^2$. On the other hand $\lim_n E_\v(u_n^\v)=m_\v(p,p)=E_\v(u_\v)$. 

\noindent Consequently  $\displaystyle\liminf_n\dfrac{1}{2}\int_\dom|\n u_n^\v|^2= \int_\dom|\n u_\v|^2$ which implies that $u_n^\v\to u_\v$ in $H^1(\dom)$. This convergence contradicts the non compactness property of $(u_n^\v)_n$.\\

It is clear that the set $\{\deg(u_\v)\}\subset\A_{(p,p)}$ is finite. Thus we may consider an extraction, still denoted by $(\v_k)_k$, s.t. $\deg(u_\v)=P_1\in\A_{(p,p)}\setminus\{(p,p)\}$.
Up to an extraction in $(\v_k)_k$, there exists $u_\infty\in\J$ s.t. $u_\v\weak u_\infty$. By Lemma \ref{LFond1} we have that $P_2:=\deg(u_\infty)\in\A_{P_1}\subset\A_{(p,p)}$ and $u_\infty$ minimizes $m_\infty(P_2)$. Therefore by Proposition \ref{PropEpsInftHR} there is $p_2\in[0,p]$ s.t. $P_2=(p_2,p_2)$. Moreover, since  $P_2=(p_2,p_2)\in\A_{P_1}\subset \A_{(p,p)}\setminus\{(p,p)\}$ we have $p_2\in[0,p-1]$. Hence it holds that (by Prop. \ref{Property1Meps}.2)

\begin{eqnarray*}
m_\infty(P_2)+\pi|(p-1,p-1)-P_2|+2\pi&{\geq}&m_\infty(p-1,p-1)+2\pi
\\\text{[Hyp. \eqref{ExistKeyHyp}]}&>&m_\infty(p,p)\\
\text{[Prop. \ref{Property1Meps}.3]}&=&\lim_{\v\to\infty}m_\v(p,p)
\\&=&\lim_{\v\to\infty}\liminf_n E_\v(u^n_\v)
\\\text{[Lemma \ref{PriceLemma}]}&\geq&\lim_{\v\to\infty} E_\v(u_\v)+\pi|(p,p)-P_1|
\\&\geq&m_\infty(P_2)+\pi|P_2-P_1|+\\&&\phantom{ghsghaaasgs}+\pi|P_1-(p,p)|.
\end{eqnarray*}

\noindent Then we deduce that:
\[
|(p-1,p-1)-P_2|+2>|P_2-P_1|+|P_1-(p,p)|.
\]
By the triangle inequality we have:
\[
|(p-1,p-1)-P_2|+2>|P_2-(p,p)|.
\]
 Since $P_2=(p_2,p_2)$ with $p_2\in[0,p-1]$, the last inequality means
 \[
 p-p_2 > p-p_2.
 \]
 This is clearly a contradiction and the proposition is proved.
 %This inequality implies that $p-p_2<0$ and thus we have $p+1\geq p_2>p$, {\it i.e.}: $p_2=p+1$. By noting that $P_2\in\A_{P_1}\subset\A_{(p+1,p+1)}$ we obtain $P_2=P_1=(p+1,p+1)$. 
 \end{proof}

By using the same strategy as in the proof of Proposition \ref{P-ExistPartThm1} we have:
\begin{prop}\label{compactinfty}
Let $p>0$ and $\dom$ an annular type domain s.t. 
\begin{equation}\tag{\ref{ExistKeyHyp}}
m_\infty(p,p)<m_\infty(p-1,p-1)+2\pi
\end{equation}
holds. Then minimizing sequences for $m_\infty(p,p)$ are compact in $H^1$ and thus $m_\infty(p,p)$ is attained.
\end{prop}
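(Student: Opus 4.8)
The plan is to follow the contradiction scheme of the proof of Proposition \ref{P-ExistPartThm1} essentially verbatim, with the simplification that only a single limiting process is involved: here $\v$ is fixed at $\infty$, so there is no need for the double extraction (over the minimizing index $n$ and over $\v_k\uparrow\infty$) used in that proof. First I would suppose, for contradiction, that there is a minimizing sequence $(u_n)_n\subset\J_{p,p}$ for $m_\infty(p,p)$ which is not compact in $H^1(\dom)$. By Proposition \ref{Property1Meps}.1 the sequence is bounded in $H^1$, so up to a subsequence $u_n\weak u$ in $H^1(\dom)$ for some $u\in\J_{P'}$.

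Next I would invoke Lemma \ref{LFond1} with $\v_*=\infty$ and the constant sequence $\v_n=\infty$ for all $n$. This yields $P'=\deg(u)\in\A_{(p,p)}$, the fact that $u$ minimizes $m_\infty(P')$, and, provided $P'\neq(p,p)$, the exact quantization $m_\infty(p,p)=m_\infty(P')+\pi|(p,p)-P'|$. To see that indeed $P'\neq(p,p)$: if $P'=(p,p)$, the Price Lemma (Lemma \ref{PriceLemma}) gives $E_\infty(u)\leq\liminf_nE_\infty(u_n)=m_\infty(p,p)\leq E_\infty(u)$, hence $\tfrac12\int_\dom|\n u_n|^2\to\tfrac12\int_\dom|\n u|^2$; combined with weak convergence this forces $u_n\to u$ strongly in $H^1$, contradicting non-compactness. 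Thus $P'\in\A_{(p,p)}\setminus\{(p,p)\}$, so writing $P'=(p',q')$ we have $0\leq p'\leq p$, $0\leq q'\leq p$ and $(p',q')\neq(p,p)$.

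The decisive step is to pin down the structure of $P'$. Since $u$ attains $m_\infty(P')$, Proposition \ref{PropEpsInftHR} together with the first non-existence result forbids $P'$ from being unbalanced: the case $p'\neq q'$ with $p'q'>0$ is impossible by non-attainment, and since $p',q'\geq0$ the only remaining way to have $p'q'\leq0$ is $p'q'=0$, which is again non-attainable unless $P'=(0,0)$. Hence $P'=(p_*,p_*)$ with $0\leq p_*\leq p-1$. Feeding this into the quantization identity gives $m_\infty(p,p)=m_\infty(p_*,p_*)+2\pi(p-p_*)$, while Proposition \ref{Property1Meps}.2 yields $m_\infty(p-1,p-1)\leq m_\infty(p_*,p_*)+2\pi(p-1-p_*)$ (note $p-1-p_*\geq0$). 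Combining,
\[
m_\infty(p,p)=m_\infty(p_*,p_*)+2\pi(p-p_*)\geq m_\infty(p-1,p-1)+2\pi,
\]
which directly contradicts hypothesis \eqref{ExistKeyHyp}. Therefore every minimizing sequence is compact and $m_\infty(p,p)$ is attained.

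I expect the main obstacle to be the decisive step of ruling out unbalanced weak limits: this is precisely where the $\v=\infty$ non-existence input of Proposition \ref{PropEpsInftHR} (obtained via the Hopf quadratic differential) is indispensable, since without it the weak limit could a priori carry an unbalanced degree and the quantization argument would not close. Once balancedness of $P'$ is secured, the contradiction is purely arithmetic --- subadditivity of $m_\infty$ together with the strict inequality \eqref{ExistKeyHyp} --- exactly as in Proposition \ref{P-ExistPartThm1}.
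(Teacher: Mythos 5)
Your proposal is correct and follows essentially the same route as the paper's own proof: weak limit of the minimizing sequence, Lemma \ref{LFond1} applied with $\v_n\equiv\v_*=\infty$ to get minimality of the limit and the quantization $m_\infty(p,p)=m_\infty(P')+\pi|(p,p)-P'|$, balancedness of $P'$ via Proposition \ref{PropEpsInftHR}, and the same arithmetic contradiction with \eqref{ExistKeyHyp} through Proposition \ref{Property1Meps}.2. The one caveat concerns your exclusion of the case $p'q'=0$, $P'\neq(0,0)$: the ``first non-existence result'' you invoke is stated and proved only for $\v\in]0,\infty[$ (its proof uses the potential term to force $|u|\equiv1$), so it cannot be cited at $\v=\infty$ --- but the paper itself passes over this case silently, and within your own argument it closes directly, since for such $P'$ the Jacobian lower bound gives $m_\infty(P')\geq\pi\max(p',q')$, whence the quantization identity yields $m_\infty(p,p)\geq 2\pi p$, contradicting \eqref{ExistKeyHyp} combined with the bound $m_\infty(p-1,p-1)\leq 2\pi(p-1)$ from Proposition \ref{Property1Meps}.1.
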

\begin{proof}Let $p>0$. Assume that $m_\infty(p,p)<m_\infty(p-1,p-1)+2\pi$. Consider $(u_n)_n$ a minimizing sequence for $m_\infty(p,p)$. Up to pass to a subsequence we have the existence of $u_\infty\in\J$ s.t. $u_n\weak u_\infty$. Let $P':=\deg(u_\infty)$. If $P'=(p,p)$ then we are done. \\

\noindent Otherwise we have: $P'\neq(p,p)$. By Lemma \ref{LFond1} we have that $u_\infty$ minimizes $m_\infty(P')$ and $P'\in\A_{(p,p)}$. Thus, by Proposition \ref{PropEpsInftHR} we have the existence of $p'\in[0,p-1]$ s.t. $P'=(p',p')$ {\it [here we used $P'\neq(p,p)$]}.

\noindent Using Lemma \ref{LFond1} again we have
\begin{eqnarray*}
m_\infty(p-1,p-1)+2\pi&\stackrel{\eqref{ExistKeyHyp}}{>}&m_\infty(p,p)
\\&=&m_\infty(P')+2\pi(p-p').
\end{eqnarray*}
\noindent Therefore we obtained $m_\infty(p-1,p-1)>m_\infty(p',p')+2\pi|p-p'-1|$. This estimate is in contradiction with Proposition \ref{Property1Meps}.2.

\noindent Consequently we have $P'=(p,p)$ and then $m_\infty(p,p)$ is attained.
\end{proof}

\subsection{Comparaison with the work of Berlyand$\&$Golovaty \cite{GB1}}\label{SectAnalCriti}

This section is essentially dedicated to the proof of the following proposition

\begin{prop}\label{CompaPropBG}
Let $p\in\N^*$ and let $0<\r_p<1$ of Theorem \ref{P-GolBer}. For a annular type domain $\dom$ s.t. its conformal {\it ratio} [see  Definition \ref{DefConfRatio}] satisfies $\r_p<\r_\dom<1$ we have $m_\infty(p,p)<m_\infty(p-1,p-1)+2\pi$.
\end{prop}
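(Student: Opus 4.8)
The plan is to exploit the conformal invariance of $E_\infty$ to reduce to the circular annulus $\Ar=B(0,1)\setminus\overline{B(0,\r)}$ with $\r=\r_\dom$ (this preserves $\J$, the Dirichlet energy and the two boundary degrees), and then to replace both minimal values by the energies of the explicit radial harmonic maps. Two preliminary facts are needed, which I would isolate as Step~1 and Step~2 since they are used elsewhere. Step~1 (monotonicity of the critical radii): $\r_p\ge\r_{p-1}$, obtained by comparing the problems for consecutive degrees, so that $\r>\r_p$ forces $\r>\r_{p-1}$. Step~2: Theorem~\ref{P-GolBer} extends to $\v=\infty$, which is exactly the content of Proposition~\ref{PropEpsInftHR}: for $\r>\r_k$ the radial harmonic map $u_{\infty,k}$ of \eqref{RadHarmSol} is the unique (up to a phase) minimizer of $m_\infty(k,k)$. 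Combining these, for $\r>\r_p$ I get $m_\infty(p,p)=E_\infty(u_{\infty,p})$ and $m_\infty(p-1,p-1)=E_\infty(u_{\infty,p-1})$, so that \eqref{ExistKeyHyp} becomes a concrete inequality between two explicit numbers.

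Next I would solve \eqref{ModulusEqRadSol}, whose solution is the Euler profile $\rho_{\infty,k}(r)=\dfrac{r^k+\r^k r^{-k}}{1+\r^k}$. Writing $E_\infty(u_{\infty,k})=\pi\int_\r^1\big(r(\rho_{\infty,k}')^2+k^2\rho_{\infty,k}^2/r\big)\,dr$ and integrating by parts with the help of $(r\rho')'=k^2\rho/r$ (which is \eqref{ModulusEqRadSol}), all interior contributions cancel and only the boundary term survives:
\[
E_\infty(u_{\infty,k})=\pi\big[r\rho_{\infty,k}\rho_{\infty,k}'\big]_\r^1=\frac{2\pi k(1-\r^k)}{1+\r^k}.
\]
Hence, for $\r>\r_p$, condition \eqref{ExistKeyHyp} is equivalent to the elementary inequality
\[
\frac{p(1-\r^p)}{1+\r^p}-\frac{(p-1)(1-\r^{p-1})}{1+\r^{p-1}}<1 .
\]
I would then analyze its left-hand side: setting $\r=e^{-2t}$ with $t>0$, so that $\dfrac{1-\r^k}{1+\r^k}=\tanh(kt)$, it equals $f(t):=p\tanh(pt)-(p-1)\tanh((p-1)t)$, with $f(0^+)=0$ and $f(t)\to1$ as $t\to+\infty$. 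Since $\frac{d}{dt}\log\dfrac{p\cosh((p-1)t)}{(p-1)\cosh(pt)}=-f(t)<0$, the equation $f'(t)=0$ has a single root, so $f$ is unimodal; as $f\to1$ from above at infinity, it meets the level $1$ exactly once, at some $t^*=t^*(p)$, and $f(t)<1$ if and only if $t<t^*$, i.e. if and only if $\r>\r_*:=e^{-2t^*}$.

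It remains to show $\r_*\le\r_p$, which is the crux, and I would do this without computing either radius. For $\r<\r_*$ one has $f>1$, that is $E_\infty(u_{\infty,p})>E_\infty(u_{\infty,p-1})+2\pi$. But the non-strict bubbling bound (Proposition~\ref{Property1Meps}.2) together with the fact that $u_{\infty,p-1}$ is a competitor for $m_\infty(p-1,p-1)$ gives
\[
m_\infty(p,p)\le m_\infty(p-1,p-1)+2\pi\le E_\infty(u_{\infty,p-1})+2\pi<E_\infty(u_{\infty,p}),
\]
so $u_{\infty,p}$ is not a minimizer of $m_\infty(p,p)$; by Step~2 this forces $\r\le\r_p$. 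Letting $\r\uparrow\r_*$ yields $\r_*\le\r_p$, and therefore $\r>\r_p\ge\r_*$ gives $f<1$, i.e. \eqref{ExistKeyHyp}. The hard part is exactly this comparison $\r_*\le\r_p$: one cannot instead produce the strict inequality by sharp bubbling (Proposition~\ref{ShaprBBL}) applied to the radial maps, because there $u\wedge\p_\tau u=k$ while $u\cdot\p_\nu u=\tfrac{k(1-\r^k)}{1+\r^k}<k$ on each boundary, so the hypothesis allowing a cheap degree-raising bubble is never satisfied; the contradiction argument above circumvents this by combining the cheap non-strict bound with the characterization of $\r_p$ as the minimality threshold.
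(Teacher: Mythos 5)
Your proposal follows the same skeleton as the paper's proof: reduce to the circular annulus by conformal invariance; use the monotonicity of the critical radii together with the extension of Theorem \ref{P-GolBer} to $\v=\infty$ to get $m_\infty(p,p)=E_\infty(u_{\infty,p})$ and $m_\infty(p-1,p-1)=E_\infty(u_{\infty,p-1})$ for $\r>\r_p$; compute $E_\infty(u_{\infty,k})=2\pi k\frac{1-\r^k}{1+\r^k}$; and compare the resulting explicit threshold with $\r_p$ using the non-strict bound of Proposition \ref{Property1Meps}.2 and the characterization of $\r_p$ as a minimality threshold. Your $\tanh$ analysis (via $\r={\rm e}^{-2t}$) is a correct, equivalent substitute for the paper's reduction to the polynomial $Q_p(\r)=p-1-p\r-\r^p$, whose unique zero $\tilde\r_p$ is your $\r_*$; and your contrapositive argument for $\r_*\le\r_p$ (for $\r<\r_*$ the radial map cannot minimize, hence $\r\le\r_p$) is logically the same as the paper's direct one (for $\r>\r_p$ the non-strict bound forces $Q_p(\r)\le0$, hence $\r\ge\tilde\r_p$). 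Your closing remark on why Proposition \ref{ShaprBBL} cannot produce the strict inequality from the radial maps is also accurate.

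The genuine gap is your Step 1. The monotonicity $\r_p\ge\r_{p-1}$ is indispensable (without it, knowing only $\r>\r_p$ you cannot assert that $m_\infty(p-1,p-1)$ is attained by $u_{\infty,p-1}$, and then \eqref{ExistKeyHyp} does not reduce to your explicit inequality, since the competitor bound $m_\infty(p-1,p-1)\le E_\infty(u_{\infty,p-1})$ goes the wrong way). But it is not ``obtained by comparing the problems for consecutive degrees'': $\r_p$ is not defined variationally; in \cite{GB1} it is $\max(\alpha,\beta_p)$ where $\beta_p$ is the threshold making the technical lower bound \eqref{LowerBoundGB1} on $\left[\left(\frac{1}{\r}-1\right)\int_\r^1 t\,\rho_{\v,p}(t)^{-2}\,{\rm d}t\right]^{-1}$ hold for all $\v>0$, so no soft comparison of infima yields monotonicity. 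The paper devotes its entire Step 1 to this point: it reduces the condition to the $\v=\infty$ profiles using $\rho_{\v,p}\ge\rho_{\infty,p}$ (Lemma \ref{COmpRhov}) and the uniform convergence \eqref{UnifConvMod}, and then shows by an explicit computation that $p\mapsto\rho_{\infty,p}(r)$ is non-increasing for each fixed $r$. Relatedly, your Step 2 quotes Proposition \ref{PropEpsInftHR} with the Berlyand--Golovaty radius; inside this paper, that identification of the Hauswirth--Rodiac radius with $\r_p$ is precisely what the paper's Step 2 establishes (by letting $\v\to\infty$ in Theorem \ref{P-GolBer} via Lemma \ref{LFond1} and \eqref{UnifConvMod}), so quoting it as known is circular within the paper's own logical structure, even though it becomes harmless if one grants the remark following Proposition \ref{PropEpsInftHR} as an external fact.
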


Proposition \ref{CompaPropBG} as two direct consequences :
\begin{enumerate}
\item If the hypothesis of Theorem \ref{P-GolBer} holds for an annular $\Ar$ then Proposition \ref{P-ExistPartThm1} holds.
\item A way to reformulate (in a weaker form) the hypothesis of Theorem \ref{PropEXist1} or Proposition \ref{P-ExistPartThm1} is to replace "$m_\infty(p,p)<m_\infty(p-1,p-1)+2\pi$" by : 
\begin{itemize}
\item[$\bullet$]the conformal {\it ratio} of $\dom$ satisfies $\r_p<\r_\dom<1$ ($0<\r_p<1$ of Theorem \ref{P-GolBer});
\item[]or equivalently  
\item[$\bullet$]$\capa(\dom)>C_p$ for $C_p=\dfrac{-2\pi}{\ln \r_p}$.
\end{itemize}
\end{enumerate}
\begin{proof}
We prove Proposition \ref{CompaPropBG} in 3 steps.\\

\noindent {\bf Step 1.} The sequence of critical radii $(\r_p)_{p\geq1}$ of Theorem \ref{P-GolBer} is non decreasing\\

The critical radius $\r_p$ is defined by $\r_p=\max(\alpha,\beta_p)$ with $\alpha\in]0,1[$ which is a universal constant and $\beta_p\in]0,1[$ depends on $p\geq1$. In order to prove that $(\r_p)_{p\geq1}$ is non decreasing, it suffices to prove the same for $(\beta_p)_{p\geq1}$.

For $p\geq1$, the definition of $\beta_p$ consists in fixing $\beta_p\in]0,1[$ s.t. for $\beta_p<R<1$ and for all $\v>0$ we have
\begin{equation}\label{LowerBoundGB1}
\frac{1}{\left(\dfrac{1}{\r}-1\right)\displaystyle\int_\r^1t\rho_{\v,p}(t)^{-2}{\rm d}t}\geq\gamma
\end{equation}
where $\rho_{\v,p}$ is defined in \eqref{EqRadEpsSol} and $\gamma>0$ is a constant (the computations are made in \cite{GB1} with {$\gamma=4$}).

Note that it is easy to prove that 
\begin{equation}\label{UnifConvMod}
\rho_{\v,p}\to\rho_{\infty,p}\text{ in }L^\infty([\r,1]) \text{ (when $\v\to\infty$)}
\end{equation}
 with $\rho_{\infty,p}$  defined in \eqref{RadHarmSol}. This uniform convergence is obtained first with the $H^1$ convergence of $u_{\v,p}\to u_{\infty,p}$ (defined in \eqref{EqRadEpsSol}$\&$\eqref{RadHarmSol}). Then using the radially symmetric structure of the function the uniform convergence \eqref{UnifConvMod} follows directly.

Clearly, with the help of \eqref{UnifConvMod} and using the fact that $\rho_{\v,p}\geq\rho_{\infty,p}$ (see Lemma \ref{COmpRhov}), the lower bound \eqref{LowerBoundGB1} holds for all $\v>0$ if and only if 
\begin{equation}\label{LowerBoundGB1-Infty}
\frac{1}{\left(\dfrac{1}{\r}-1\right)\displaystyle\int_\r^1t\rho_{\infty,p}(t)^{-2}{\rm d}t}\geq\gamma.
\end{equation}
We are now in position to get that $(\beta_p)_{p\geq1}$ is non decreasing by proving that for all $r\in[\r,1]$ and $p\geq1$ we have $\rho_{\infty,p+1}(r)\leq \rho_{\infty,p}(r)$.

We fix $r\in[\r,1]$ and we let
\[
\begin{array}{cccc}
f_r:&[1,\infty[&\to&[0,1]
\\
&p&\mapsto& \rho_{\infty,p}(r)=\dfrac{1}{1+\r^p}\left(r^p+\dfrac{\r^p}{r^p}\right)
\end{array}.
\] 
It is clear that $f_r$ is smooth and that
\[
f'_r(p)=\frac{\ln(r)\left[r^p-\left(\dfrac{\r}{r}\right)^p\right](1+\r^p)+\ln(\r)\left[\left(\dfrac{\r}{r}\right)^p-(\r r)^p\right]}{(1+\r^p)^2}.
\]
We have obviously that $f'_r(p)\leq0$ if $\sqrt\r\leq r\leq1$ and if $\r\leq r\leq\sqrt\r$ then letting $r=s\r$ with $s\in[1,\dfrac{1}{\sqrt\r}]$ we have
\[
f'_r(p)=\frac{\ln(\r)(1-\r^p)s^p\r^p+\ln (s)(s^p\r^p-s^{-p})}{(1+\r^p)^2}.
\]
And once agin we have $f'_r(p)\leq0$.

Consequently the function $f_r$ is non increasing, {\it i.e.}, $\rho_{\infty,p+1}(r)\leq \rho_{\infty,p}(r)$. The last inequality imply thus with the help of definition of $\beta_p$ (see \eqref{LowerBoundGB1}) that $\beta_{p+1}\geq\beta_p$. Therefore $\r_{p+1}\geq \r_p$.\\

\noindent {\bf Step 2. }For $p\geq1$, $\r_p<\r<1$ and $\dom=B(0,1)\setminus\overline{B(0,\r)}$, $u_{\infty,p}$ minimizes $m_\infty(p,p)$\\

This step is a direct consequence of Theorem \ref{P-GolBer}, Lemma \ref{LFond1} and \eqref{UnifConvMod}. Indeed from Theorem \ref{P-GolBer}, for $\v>0$, $u_{\v,p}$ defined by \eqref{EqRadEpsSol}$\&$\eqref{ModulusEqEpsRadSol} minimizes $m_\v(p,p)$. 

On the one hand, by \eqref{UnifConvMod}, $u_{\v,p}\to u_{\infty,p}$ in $L^\infty(B(0,1)\setminus\overline{B(0,\r)})$.

On the other hand, with the help of Lemma \ref{LFond1}, up to pass to a subsequence, when $\v\to\infty$, $u_{\v,p}$ converges weakly in $H^1(B(0,1)\setminus\overline{B(0,\r)})$ to a minimizer of $m_\infty(P)$ for some $P\in\A_{p,p}$. 

By combining both previous claims we get that $u_{\infty,p}$ minimizes $m_\infty(p,p)$.\\

\noindent{\bf Step 3.} Conclusion\\

Note that for $p=1$ $m_\infty(1,1)<2\pi$ and thus the result of Proposition \ref{CompaPropBG} is obvious.

We prove that if $p\geq2$, $\r_p<\r<1$ and $\dom=B(0,1)\setminus\overline{B(0,\r)}$ then $m_\infty(p,p)<m_\infty(p-1,p-1)+2\pi$. 

Once this is done, by conformal invariance, we get that if $\dom$ is an annular type domain whose conformal {\it ratio} satisfies $\r_p<\r_\dom<1$ then we have $m_\infty(p,p)<m_\infty(p-1,p-1)+2\pi$.

Let $p\geq2$, $\r_p<\r<1$ and $\dom=B(0,1)\setminus\overline{B(0,\r)}$. From Steps 1$\&$2, we have for $q\in\{p-1,p\}$ that $m_\infty(q,q)$ is reached by $u_{\infty,q}$.

Consequently (using Theorem 1.3 in \cite{HR})
\begin{eqnarray*}
m_\infty(p,p)-m_\infty(p-1,p-1)&=&E_\infty(u_{\infty,p})-E_\infty(u_{\infty,p-1})
\\
&=&2\pi\left[p\frac{1-\r^p}{1+\r^p}-(p-1)\frac{1-\r^{p-1}}{1+\r^{p-1}}\right].
\end{eqnarray*}
Consequently, for $R\in]0,1[$
\begin{eqnarray*}
&E_\infty(u_{\infty,p})-E_\infty(u_{\infty,p-1})<2\pi\\
\Leftrightarrow&p(1-\r^p)(1+\r^{p-1})-(p-1)(1-\r^{p-1})(1+\r^p)<(1+\r^{p-1})(1+\r^p)
\\\Leftrightarrow&Q_p(\r):=p-1-p\r-\r^p<0
\end{eqnarray*}
and 
\begin{eqnarray*}
E_\infty(u_{\infty,p})-E_\infty(u_{\infty,p-1})=2\pi\Longleftrightarrow&Q_p(\r)=0.
\end{eqnarray*}
It is easy to check that, for $p\geq2$ and $\r\in]0,1[$, $Q_p$ is decreasing and that $Q_p(1)=-2$, $Q_p(0)=p-1$. Therefore $Q_p$ admits a unique zero $\tilde{\r}_p$ in $]0,1[$ and for $\r\in]0,1[$ we have $Q_p(\r)<0\Longleftrightarrow \tilde{\r}_p<\r<1$. 

We now prove that $\tilde{\r}_p\leq\r_p$. Let $\r_p<\r<1$. From Steps 1$\&$2, for $q\in\{p-1,p\}$ we have that $m_\infty(q,q,B(0,1)\setminus\overline{B(0,\r)})$ is reached by $u_{\infty,q}$.  Consequently, using Proposition \ref{Property1Meps}.2 we have 
\[
E_\infty(u_{\infty,p})-E_\infty(u_{\infty,p-1})\leq2\pi.
\]
This inequality implies that (from the definitions of $Q_p$ and $\tilde{\r}_p$) $Q_p(\r)\leq0$ and thus $\r\geq\tilde{\r}_p$. Because $\r_p<\r<1$ is arbitrary this consequence proves that $\tilde{\r}_p\leq\r_p$.

The inequality $\tilde{\r}_p\leq\r_p$ expresses that if $\r\in]\r_p,1[$ then $m_\infty(p,p)-m_\infty(p-1,p-1)<2\pi$ and this ends the proof of Proposition \ref{CompaPropBG}.

\end{proof}
\begin{remark}\begin{itemize}
\item {\bf Numerical computation.} Berlyand and Mironescu obtained the existence of Ginzburg-Landau minimizers in $\I_{1,1}$ for large $\e$ without restriction on the capacity of the domain ({\it cf.}  Corollary 5.5. in {\cite{BeMi0}}). In particular they proved that $u_{\infty,1}$ minimizes $m_\infty(1,1)$ for all $\r\in]0,1[$ ({\it cf.} Proposition 5.2. in \cite{BeMi0}).

 For us the first interesting configuration of degrees is $P=(2,2)$. Since $m_\infty(2,2)\leq E_\infty(u_2)$ we obtain that \eqref{ExistKeyHyp} holds if we have:
\begin{equation}\label{ExistKeyHypRelax1}
E_\infty(u_2)=4\pi\dfrac{1-\r^2}{1+\r^2}<2\pi\dfrac{1-\r}{1+\r}+2\pi=m_\infty(1,1)+2\pi.
\end{equation}
Namely \eqref{ExistKeyHypRelax1} implies \eqref{ExistKeyHyp}.

\noindent The study of \eqref{ExistKeyHypRelax1} is easy to do ({\it cf.} \cite{HR} proof of Theorem 5.4.) and gives:
\[
\text{\eqref{ExistKeyHypRelax1} holds if and only if }\r>\sqrt2-1.
\]
Thus if $\r>\sqrt2-1$ then \eqref{ExistKeyHyp} holds and a minimizer of $E_\e$ in $\I_{2,2}$ exists if $\e$ is large enough. 

On the other hand, the radius $R_1$ obtained in \cite{GB1} is ${\rm e}^{\frac{-1}{16\pi^2}}\simeq 0.99$ while $\sqrt2-1\simeq0.41$.

\item{\bf Comparision of Hypotheses.} As explain in Remark 2.14 of \cite{GB1}, the Hypothesis of Theorem \ref{P-GolBer} is artificial : the optimal thickness condition should depend on $\v$. 

The formulation of Theorem \ref{PropEXist1} is not optimal in the sense given by Berlyand and Golovaty in Remark 2.14 of \cite{GB1}. But it allows to have existence of minimizers for $m_\v(p,p)$ for a wider class of annular type domains:
\begin{enumerate}[$\bullet$]
\item Theorem \ref{PropEXist1} holds for annular type domain while the work of Golovaty and Berlyand is specific to annulars.
\item Proposition \ref{CompaPropBG} means that if the hypothesis on the size of the annular in Theorem \ref{P-GolBer} holds then Hypothesis of  Theorem \ref{PropEXist1} holds. 
\end{enumerate}
\end{itemize}
\end{remark} 

\subsection{Asymptotic behavior of minimizers as $\e \rightarrow +\infty$}
\begin{prop}\label{conv1}
Let $p\geq 1$ be an integer and let $\dom$ be an annular type domain s.t. $m_\infty(p,p) < m_\infty(p-1,p-1)$. Thanks to that condition minimizers $u_\e$ of $E_\e$ in $\I_{p,p}$ do exist for $\e$ large [Prop. \ref{P-ExistPartThm1}] and $\e=+\infty$ [Prop. \ref{compactinfty}]. 

Then it holds that, up to a subsequence,
\begin{equation}
 u_\e \rightarrow u_\infty \ \ \text{in} \ C^{l} \ \text{for all} \ l\in  \mathbb{N},
\end{equation}
\noindent where $u_\infty$ is a minimizer of $E_\infty$ in $\I_{p,p}$.
\end{prop}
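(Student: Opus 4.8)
The plan is to run the concentration--compactness argument of Lemma~\ref{LFond1} along a diagonal sequence $\e=\e_k\uparrow\infty$, to identify the weak $H^1$ limit as a minimizer of $m_\infty(p,p)$, then to upgrade weak convergence to strong $H^1$ convergence by an energy comparison, and finally to bootstrap the Euler--Lagrange system \eqref{semi-stiff 1} to obtain the $C^l$ convergence. First I fix minimizers $u_\e\in\J_{p,p}$ of $E_\e$ for large $\e$: they exist by Proposition~\ref{P-ExistPartThm1}, since $m_\infty(p,p)<m_\infty(p-1,p-1)$ implies \eqref{ExistKeyHyp}. As $E_\e(u_\e)=m_\e(p,p)\le\pi|(p,p)|$ by Proposition~\ref{Property1Meps}.1, the family $(u_\e)$ is bounded in $H^1(\dom)$, hence along some $\e_k\uparrow\infty$ it converges weakly to some $u_\infty\in\J_{P'}$. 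Applying Lemma~\ref{LFond1} with $P=(p,p)$, $\v_*=\infty$ and the exact minimizers $u_n=u_{\e_{k_n}}$ yields $P'\in\A_{(p,p)}$, that $u_\infty$ minimizes $m_\infty(P')$, and that $m_\infty(p,p)=m_\infty(P')+\pi|(p,p)-P'|$ whenever $P'\neq(p,p)$.

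Next I argue, exactly as in the proof of Proposition~\ref{compactinfty}, that $P'=(p,p)$. Since $P'\in\A_{(p,p)}$ both its entries lie in $[0,p]$, and since $m_\infty(P')$ is \emph{attained} (by $u_\infty$), Proposition~\ref{PropEpsInftHR} (no minimizer when $pq>0$ and $p\neq q$) together with the non-attainment of $m_\infty$ when $pq\le0$ force $P'=(p',p')$ with $0\le p'\le p$. If $p'\le p-1$, then $m_\infty(p,p)=m_\infty(p',p')+2\pi(p-p')$; combining this with the hypothesis $m_\infty(p,p)<m_\infty(p-1,p-1)+2\pi$ and with Proposition~\ref{Property1Meps}.2, which gives $m_\infty(p-1,p-1)\le m_\infty(p',p')+\pi|(p-1,p-1)-(p',p')|=m_\infty(p',p')+2\pi(p-1-p')$, leads to $m_\infty(p',p')+2\pi(p-p')<m_\infty(p',p')+2\pi(p-p')$, a contradiction. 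Hence $P'=(p,p)$, so $u_\infty\in\J_{p,p}$ is a minimizer of $m_\infty(p,p)$.

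Strong $H^1$ convergence then follows from an energy comparison. Since $\deg(u_\infty)=(p,p)$, the Price Lemma~\ref{PriceLemma} gives $\liminf_k E_\infty(u_{\e_k})\ge E_\infty(u_\infty)$; on the other hand Proposition~\ref{Property1Meps}.3 gives $E_{\e_k}(u_{\e_k})=m_{\e_k}(p,p)\to m_\infty(p,p)=E_\infty(u_\infty)$, and because $|u_{\e_k}|\le1$ by the maximum principle the potential term $\frac{1}{4\e_k^2}\int_\dom(1-|u_{\e_k}|^2)^2\to0$. Hence $E_\infty(u_{\e_k})\to E_\infty(u_\infty)$, i.e.\ $\|\n u_{\e_k}\|_{L^2}\to\|\n u_\infty\|_{L^2}$; combined with weak convergence in the Hilbert space $H^1$, this yields $u_{\e_k}\to u_\infty$ strongly in $H^1(\dom)$.

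Finally, to pass from $H^1$ to $C^l(\overline\dom)$ I bootstrap the system \eqref{semi-stiff 1}. Each $u_\e$ solves $-\Delta u_\e=\e^{-2}u_\e(1-|u_\e|^2)$ with $|u_\e|\le1$, so the right-hand side is bounded in $L^\infty$ uniformly in $\e$ (indeed it tends to $0$), while $u_\infty$ is harmonic. Away from $\p\dom$ interior elliptic estimates immediately give uniform $C^{k}$ bounds on compact subsets; near $\p\dom$ the modulus $|u_\e|$ stays close to $1$, so one may lift $u_\e=\rho_\e e^{\imath\phi_\e}$ in a boundary collar, where $\rho_\e$ solves a semilinear Dirichlet problem and $\phi_\e$ a divergence-form Neumann problem, and bootstrap each. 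The main obstacle is precisely this last step: securing elliptic estimates \emph{uniformly in $\e$ and up to the boundary} for the nonstandard semi-stiff conditions $|u_\e|=1$, $u_\e\wedge\p_\nu u_\e=0$. Once uniform $C^{k,\alpha}(\overline\dom)$ bounds are obtained for every $k$, Arzelà--Ascoli and the uniqueness of the $H^1$-limit give $u_{\e_k}\to u_\infty$ in $C^l(\overline\dom)$ for all $l\in\N$, along a further subsequence, which is the claim.
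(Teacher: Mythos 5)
Your first three paragraphs are correct and amount to the paper's Lemma \ref{conv0}: you identify the weak limit through Lemma \ref{LFond1} and the argument of Proposition \ref{compactinfty}, then upgrade to strong $H^1$ convergence by an energy comparison. (The paper reaches the same point more economically, by noting that $E_\infty(u_\e)\le E_\e(u_\e)\le E_\e(\tilde{u}_\infty)=E_\infty(\tilde{u}_\infty)+O(\e^{-2})$ makes $(u_\e)$ a minimizing sequence for $m_\infty(p,p)$, so Proposition \ref{compactinfty} applies directly.) The genuine gap is in your final paragraph, which is where the actual content of the proposition lies: the passage from strong $H^1$ and interior regularity to convergence in $C^l(\overline{\dom})$. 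You name the uniform-in-$\e$ boundary estimates for the semi-stiff conditions as ``the main obstacle'' and then simply assume they hold; that is exactly the statement to be proved, and it cannot be obtained by bootstrapping the Ginzburg--Landau system as it stands, since $|u|=1$, $u\wedge\p_\nu u=0$ is not a standard (Dirichlet or Neumann) boundary-value problem for the vector-valued unknown.

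Concretely, two things are missing. First, your claim that $|u_\e|$ stays uniformly close to $1$ in a boundary collar is not a consequence of anything you proved ($C^l_{\rm loc}$ convergence says nothing up to $\p\dom$, and a priori zeros of $u_\e$ could approach the boundary); this is Lemma \ref{closenessto1} of the paper, proved by splitting $u_\e=v_\e+w_\e$ with $v_\e$ the harmonic extension of $\tr_{\p\dom}u_\e$, estimating $|w_\e(z)|\le C\e^{-1}\dist(z,\p\dom)$ via Lemma \ref{ellipticestimate}, and applying the Brezis--Nirenberg VMO stability result (Lemma \ref{VMO}) to the strongly $H^{1/2}$-convergent traces. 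Second, granting that, the uniform boundary regularity is obtained in the paper by (i) lifting $u_\e=\rho_\e e^{\imath(\varphi+\psi_\e)}$ relative to the phase $\varphi$ of the limit $u_\infty$ (after checking that the relevant degrees vanish), so that $\rho_\e$ solves a scalar Dirichlet problem and $\psi_\e$ a divergence-form problem with homogeneous Neumann condition; (ii) proving a uniform $L^4$ bound on $\n\psi_\e$ near $\p\dom$ by Schwarz reflection across the (flattened) boundary together with an absorption argument that crucially uses the smallness of $\|1-\rho_\e\|_{L^\infty}$ from Lemma \ref{closenessto1}; and (iii) an alternating bootstrap between the $\rho_\e$ and $\psi_\e$ equations, with cut-off functions, yielding $W^{m,q}$ bounds for all $m$ and $1<q<\infty$, hence compactness in $C^{l,\gamma}$ up to the boundary. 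These steps are the substance of the paper's proof and are absent from your proposal, so as written the proposal does not establish the proposition.
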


\noindent The starting point of the proof of the previous proposition is the following:

\begin{lem}\label{conv0}
Under the same hypothesis as in Proposition \ref{conv1}, we have that, up to a subsequence,
\begin{equation}
u_\e \rightarrow u_\infty \ \ \text{strongly in} \ H^1(\dom) \ \text{and in} \ C^l_{\rm{loc}} \ \forall l\in \mathbb{N}
\end{equation}
\noindent where $u_\infty$ is a minimizer of $E_\infty$ in $\I_{p,p}$.
\end{lem}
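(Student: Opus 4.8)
The plan is to analyze the weak $H^1$ limit of the minimizers along a sequence $\e_n\uparrow\infty$ by means of Lemma \ref{LFond1}, to use the strict inequality $m_\infty(p,p)<m_\infty(p-1,p-1)$ to exclude any loss of degree in the limit, and finally to upgrade weak convergence to strong $H^1$ and then $C^l_{\rm loc}$ convergence. Concretely, I would take $\e_n\uparrow\infty$ (with $\e_n$ large enough that a minimizer exists, by Proposition \ref{P-ExistPartThm1}) and pick minimizers $u_{\e_n}\in\J_{p,p}$ of $E_{\e_n}$. By Proposition \ref{Property1Meps}.1 one has $E_{\e_n}(u_{\e_n})=m_{\e_n}(p,p)\le 2\pi p$, so $(u_{\e_n})_n$ is bounded in $H^1(\dom)$, and the maximum principle gives $|u_{\e_n}|\le1$. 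After extraction $u_{\e_n}\weak u_\infty$ in $H^1(\dom)$. Since $E_{\e_n}(u_{\e_n})=m_{\e_n}(p,p)$, the sequence meets the hypotheses of Lemma \ref{LFond1} with $\v_*=\infty$; hence $P':=\deg(u_\infty)\in\A_{(p,p)}$, the map $u_\infty$ minimizes $m_\infty(P')$, and, if $P'\ne(p,p)$, then $m_\infty(p,p)=m_\infty(P')+\pi|(p,p)-P'|$.

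The key step is to show $P'=(p,p)$. As $m_\infty(P')$ is attained by $u_\infty$ and $P'\in\A_{(p,p)}$ has nonnegative entries, Proposition \ref{PropEpsInftHR} forces $P'=(p',p')$ with $p'\in[0,p]$ (a class $(a,b)$ with $a\ne b$ and $ab\ge0$ is never a minimizer class for $m_\infty$). Assume by contradiction $p'\le p-1$. Then Lemma \ref{LFond1} yields $m_\infty(p,p)=m_\infty(p',p')+2\pi(p-p')$, while Proposition \ref{Property1Meps}.2 yields $m_\infty(p-1,p-1)\le m_\infty(p',p')+2\pi(p-1-p')$. Inserting both into the hypothesis $m_\infty(p,p)<m_\infty(p-1,p-1)$ gives $2\pi(p-p')<2\pi(p-1-p')$, i.e. $0<-1$, a contradiction. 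Hence $P'=(p,p)$ and $u_\infty$ minimizes $m_\infty(p,p)$.

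For the strong $H^1$ convergence I would use convergence of energies: since $|u_{\e_n}|\le1$, the potential term satisfies $\frac{1}{4\e_n^2}\int_\dom(1-|u_{\e_n}|^2)^2\le\frac{|\dom|}{4\e_n^2}\to0$, so with Proposition \ref{Property1Meps}.3
\[
\tfrac12\int_\dom|\n u_{\e_n}|^2=E_{\e_n}(u_{\e_n})-\tfrac{1}{4\e_n^2}\int_\dom(1-|u_{\e_n}|^2)^2\longrightarrow m_\infty(p,p)=E_\infty(u_\infty)=\tfrac12\int_\dom|\n u_\infty|^2.
\]
Thus $\|\n u_{\e_n}\|_{L^2}\to\|\n u_\infty\|_{L^2}$; combined with $\n u_{\e_n}\weak\n u_\infty$ in $L^2$ this forces strong $L^2$ convergence of the gradients, and together with Rellich compactness $u_{\e_n}\to u_\infty$ in $L^2$, we obtain $u_{\e_n}\to u_\infty$ in $H^1(\dom)$.

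Finally, for the $C^l_{\rm loc}$ statement I would bootstrap on \eqref{semi-stiff 1}: on any $K\subset\subset\dom$ the right-hand side $\e_n^{-2}u_{\e_n}(1-|u_{\e_n}|^2)$ is bounded in $L^\infty$ (indeed it tends to $0$ uniformly), so interior $W^{2,q}$ and Schauder estimates bound $u_{\e_n}$ in $C^{k}_{\rm loc}$ for every $k$; by Arzel\`a--Ascoli and a diagonal extraction one gets a $C^l_{\rm loc}$ limit, which must coincide with $u_\infty$ by uniqueness of the $L^2$ limit. The only genuinely delicate point is the degree identification, where the sharp quantization $m_\infty(p,p)=m_\infty(P')+\pi|(p,p)-P'|$ furnished by the Price Lemma inside Lemma \ref{LFond1} must be balanced against the hypothesis; the remaining estimates are routine.
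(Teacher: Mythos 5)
Your proof is correct, but it follows a genuinely different route from the paper's. The paper's own argument is a short reduction: taking $\tilde u_\infty$ a minimizer of $E_\infty$ in $\J_{p,p}$ (it exists by Proposition \ref{compactinfty}), the chain
\[
E_\infty(\tilde u_\infty)\le E_\infty(u_\e)\le E_\e(u_\e)\le E_\e(\tilde u_\infty)=E_\infty(\tilde u_\infty)+\frac{1}{4\e^2}\int_\dom(1-|\tilde u_\infty|^2)^2
\]
shows that $(u_\e)_\e$ is a minimizing sequence for $m_\infty(p,p)$, and Proposition \ref{compactinfty} then gives in one stroke the strong $H^1$ compactness and the identification of the limit as a minimizer of $E_\infty$ in $\J_{p,p}$; the $C^l_{\rm loc}$ convergence follows from interior elliptic estimates, as in your last step. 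You instead apply Lemma \ref{LFond1} directly to the minimizers with $\v_n=\e_n\uparrow\infty$, exclude a drop of degree by playing the quantization $m_\infty(p,p)=m_\infty(p',p')+2\pi(p-p')$ against Proposition \ref{Property1Meps}.2 and the hypothesis, and then upgrade weak to strong $H^1$ convergence via convergence of the Dirichlet energies. This amounts to re-proving inline the content of Proposition \ref{compactinfty} (your degree-exclusion contradiction is the same mechanism used there and in Proposition \ref{P-ExistPartThm1}), so it is longer; what it buys is that it never needs a fixed competitor $\tilde u_\infty$, it makes explicit the no-bubbling mechanism ($\pi$ of energy lost per unit of degree), and it directly yields $E_{\e_n}(u_{\e_n})\to m_\infty(p,p)$. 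One point to be aware of: you use the hypothesis as literally written in Proposition \ref{conv1}, namely $m_\infty(p,p)<m_\infty(p-1,p-1)$, whereas the intended hypothesis is \eqref{ExistKeyHyp} with $+2\pi$ on the right-hand side (this is what Propositions \ref{P-ExistPartThm1} and \ref{compactinfty}, which you invoke for existence of the $u_\e$ and for Proposition \ref{PropEpsInftHR}-type conclusions, actually require); your contradiction survives the correction, since $2\pi(p-p')<2\pi+2\pi(p-1-p')$ is still impossible, so the argument is robust under either reading.
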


\begin{proof}
For $\e$ large, if the domain $\dom$ is s.t. $m_\infty(p,p) < m_\infty(p-1,p-1)$, denoting by $u_\e$ a minimizer of $E_\e$ in $\I_{p,p}$ and by $\tilde{u}_\infty$ a minimizer of $E_\infty$ in $\I_{p,p}$ we have:
\begin{eqnarray}\nonumber
E_\infty(\tilde{u}_\infty)& \leq  & E_\infty(u_\e) \\%\label{inf} \\
& \leq & E_\e(u_\e) \nonumber \\
& \leq & E_\e(\tilde{u}_\infty) \nonumber \\\nonumber
&= & E_\infty(\tilde{u}_\infty) +\frac{1}{4\e^2}\int_\dom (1-|\tilde{u}_\infty|^2)^2. %\label{sup}
\end{eqnarray}
Hence we see that $(u_\e)_\e$ is a minimizing sequence for $m_\infty(p,p)$. By Proposition \ref{compactinfty}, along a subsequence we then have $u_\e \to u_\infty$ in $H^1(\dom)$ for some $u_\infty$ which solves $m_\infty(p,p)$.% But thanks to  \eqref{inf} and \eqref{sup} we obtain that $\frac{1}{2}\int_\dom |\nabla u_\e|^2 \rightarrow \frac{1}{2}\int_\dom |\nabla \tilde{u_\infty}|^2$. We also have from Proposition \ref{compactinfty} that minimizing sequences for $m_\infty(p,p)$ are compact. Thus $u_\e$ converges strongly in $H^1(\dom)$ to $u_\infty \in \I_{p,p}$ and $u_\infty$ is a minimizer of $E_\infty$ in $\I_{p,p}$. 

The $C^l_{\text{loc}}$ convergence for all $l\in \mathbb{N}$ is obtained by classic elliptic estimates (see \cite{GilbargTrudinger}).
\end{proof}

We now prove that the convergence holds in $C^{l}(\overline{\dom})$ for all $l\in \mathbb{N}$. To this end we adapt the strategy of Berlyand and Mironescu (Section 8 in \cite{BeMi0}).

We divide the proof into four steps:\\

\noindent \textbf{Step 1.} We have that $|u_\e|$ is uniformly close to $1$ near $\p\dom$  for large $\e$
\begin{lem} \label{closenessto1}
Let $\rho_\e:=|u_\e|$. For all $\eta >0$, there exist $\delta>0$ and $\e_0>0$ s.t. for all $\e \geq \e_0$ and for all $z$ s.t. $\dist(z,\partial \dom) <\delta$ it holds that 
$$\| \rho_\e -1 \|_{L^\infty} < \eta.$$
\end{lem}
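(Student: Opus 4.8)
The plan is to argue by contradiction through a blow-up at the boundary, the key point being that the \emph{strong} $H^1$ convergence $u_\e\to u_\infty$ provided by Lemma \ref{conv0} forces the Dirichlet energy to vanish in the (shrinking) blow-up region, so that the blow-up limit is necessarily a constant of modulus one. Remarkably, this bypasses any free-boundary regularity theory for the semi-stiff problem.

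Suppose the conclusion fails. Then there are $\eta_0>0$, a sequence $\e_n\to+\infty$ and points $z_n\in\dom$ with $d_n:=\dist(z_n,\p\dom)\to0$ such that the minimizers $u_{\e_n}$ satisfy $|u_{\e_n}(z_n)|\le1-\eta_0$ (recall $|u_{\e_n}|\le1$ by the maximum principle). Let $\xi_n\in\p\dom$ be the (unique, for $n$ large) nearest boundary point and $\hat\nu_n$ the inner unit normal at $\xi_n$, so that $z_n=\xi_n+d_n\hat\nu_n$; up to a subsequence, $\xi_n\to\xi_*\in\p\dom$. I would then rescale at scale $d_n$: choosing rotations $O_n$ with $O_ne_2=\hat\nu_n$, set $U_n(y):=u_{\e_n}(\xi_n+d_nO_ny)$, defined on $\dom_n:=\{y\,:\,\xi_n+d_nO_ny\in\dom\}$. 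Since $\p\dom$ is smooth, the rescaled boundaries $\p\dom_n$ converge in $C^2_{\rm loc}$ to the line $\{y_2=0\}$ with interior side $\{y_2>0\}$, and $U_n(e_2)=u_{\e_n}(z_n)$ satisfies $|U_n(e_2)|\le1-\eta_0$. From \eqref{semi-stiff 1}, $U_n$ solves $-\Delta U_n=\tfrac{d_n^2}{\e_n^2}U_n(1-|U_n|^2)$, with $\|{\rm RHS}\|_{L^\infty}\le d_n^2/\e_n^2\to0$, together with $|U_n|=1$ and $U_n\wedge\p_\nu U_n=0$ on $\p\dom_n$.

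The crucial estimate is that the local Dirichlet energy of $U_n$ vanishes. Indeed, by conformal invariance of the Dirichlet integral in dimension two, for every fixed $R>0$,
\[
\int_{B(0,R)\cap\dom_n}|\n U_n|^2 = \int_{B(z_n,Rd_n)\cap\dom}|\n u_{\e_n}|^2 \le 2\int_\dom|\n(u_{\e_n}-u_\infty)|^2 + 2\int_{B(z_n,Rd_n)}|\n u_\infty|^2 .
\]
The first term tends to $0$ by the strong $H^1$ convergence of Lemma \ref{conv0}, and the second tends to $0$ by absolute continuity of the integral, since $|B(z_n,Rd_n)|=\pi R^2d_n^2\to0$ and $|\n u_\infty|^2\in L^1(\dom)$. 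Hence $\int_{B(0,R)\cap\dom_n}|\n U_n|^2\to0$ for each $R$. Fixing $R$ and straightening $\dom_n\cap B(0,R)$ to $\R^2_+\cap B(0,R)$ by $C^2$-diffeomorphisms close to the identity, the bound $\|\n U_n\|_{L^2}\to0$ together with $|U_n|\le1$ yields, via Poincar\'e--Wirtinger, $U_n\to c$ strongly in $H^1(B(0,R)\cap\R^2_+)$ for some constant $c\in\overline{\D}$ (after a subsequence). On the one hand, the trace operator $H^1\to L^2(\{y_2=0\})$ is continuous, so $\tr U_n\to c$ in $L^2$ of the (flattened) boundary; since $|\tr U_n|=1$ there, we get $|c|=1$. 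On the other hand, $e_2$ is an interior point with $\overline{B(e_2,1/2)}\subset\R^2_+$, and the interior elliptic estimate applied to $-\Delta U_n=\tfrac{d_n^2}{\e_n^2}U_n(1-|U_n|^2)$ (right-hand side $\to0$ in $L^\infty$, and $U_n\to c$ in $L^2_{\rm loc}$) gives $U_n\to c$ in $C^0$ near $e_2$, whence $|c|=\lim_n|U_n(e_2)|\le1-\eta_0$. These two facts contradict each other, which proves the lemma.

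The main point — which is really what makes the argument light rather than an obstacle — is the vanishing of the local energy: it is the combination of the \emph{strong} $H^1$ convergence from Lemma \ref{conv0} with the scale invariance of the two-dimensional Dirichlet energy that collapses the blow-up limit to a constant of modulus one, and thereby avoids invoking any boundary regularity theory for the nonlinear (oblique) semi-stiff conditions $|u|=1$, $u\wedge\p_\nu u=0$. The remaining technical ingredients — the $C^2_{\rm loc}$ convergence of the rescaled domains $\dom_n$ and the consequent uniformity of the Poincar\'e and trace constants — are routine consequences of the smoothness of $\p\dom$; the only place where one must be slightly careful is to check that $e_2$ stays in the interior of $\dom_n$ for $n$ large so that the interior estimate near $e_2$ applies.
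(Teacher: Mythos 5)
Your proof is correct, and it takes a genuinely different route from the paper. The paper's argument is constructive: it splits $u_\e=v_\e+w_\e$ into the harmonic extension $v_\e$ of $\tr_{\p\dom}u_\e$ plus a remainder $w_\e$ solving the Dirichlet problem with right-hand side $\tfrac{1}{\e^2}u_\e(1-|u_\e|^2)$, controls $w_\e$ by the Berger--Bethuel--Brezis--H\'elein interpolation estimate $\|\n w\|_{L^\infty}\leq C\|w\|_{L^\infty}^{1/2}\|\Delta w\|_{L^\infty}^{1/2}$ (yielding $|w_\e(z)|\leq C\dist(z,\p\dom)/\e$), and controls $v_\e$ by the Brezis--Nirenberg VMO theorem applied to the $H^{1/2}$-convergent $\S^1$-valued traces. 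You instead run a blow-up contradiction at scale $d_n=\dist(z_n,\p\dom)$, and the engine of your proof is the observation that strong $H^1$ convergence (Lemma \ref{conv0}) plus the scale invariance of the two-dimensional Dirichlet integral forces the rescaled energy to vanish, so the blow-up limit is a constant, which must simultaneously have modulus one (trace continuity, since $|u_\e|=1$ on $\p\dom$) and modulus at most $1-\eta_0$ (interior elliptic estimates at the point $e_2$) --- a contradiction. Both proofs rest crucially on Lemma \ref{conv0}, but yours replaces the VMO machinery and the BBH gradient estimate by entirely standard tools (Poincar\'e--Wirtinger with uniform constants, trace continuity, interior $W^{2,p}$ estimates); what you give up is quantitative information: the paper's proof exhibits an explicit rate, $|u_\e(z)|\geq |v_\e(z)|- C\dist(z,\p\dom)/\e$, hence an explicit relation between $\eta$, $\delta$ and $\e_0$, whereas your compactness argument is purely qualitative and passes through subsequences (which is legitimate here: from any counterexample sequence $\e_n\to\infty$ one may extract a further subsequence along which Lemma \ref{conv0} applies, and the contradiction along that subsequence suffices). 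Two cosmetic points: the rescaling centered at $\xi_n$ turns $B(0,R)\cap\dom_n$ into $B(\xi_n,Rd_n)\cap\dom$, not $B(z_n,Rd_n)\cap\dom$ (harmless, since $B(\xi_n,Rd_n)\subset B(z_n,(R+1)d_n)$), and one should say explicitly that $\dist(e_2,\p\dom_n)=1$ exactly by construction, so $B(e_2,1/2)\subset\dom_n$ for every $n$, while the second (rescaled) boundary component of the annulus escapes to infinity and never enters $B(0,R)$ for $n$ large.
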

\noindent For the proof of this lemma we need the following reformulation of Berlyand$\&$Mironescu (see Lemma 8.3 in \cite{BeMi0}) of a result of Brezis$\&$Nirenberg :

\begin{lem}[Theorem A3.2. in \cite{BrezisNirenberg2}]\label{VMO}
Let $(g_n) \subset \text{VMO}(\partial \dom; \mathbb{S}^1)$ be s.t. $g_n \rightarrow g$ strongly in $\text{VMO}(\partial \dom)$. Then for each $0<a<1$, there is some {$\delta'>0$}, independent of $n$ s.t. 
$$a \leq |\tilde{u}(g_n)(z)| \leq 1, \ \ \text{if} \ \ \dist(z,\partial \dom) < \delta'.$$
Here $\tilde{u}(g_n)$ is the harmonic extension of $g_n$ to $\dom$.
\end{lem}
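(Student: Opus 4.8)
The upper bound is immediate and independent of $n$: since the harmonic extension $\tilde{u}(g_n)$ is harmonic componentwise, $|\tilde{u}(g_n)|^2$ is subharmonic ($\Delta|\tilde u(g_n)|^2=2|\n\tilde u(g_n)|^2\ge0$) and equals $|g_n|^2=1$ on $\p\dom$, so the maximum principle forces $|\tilde{u}(g_n)|\le1$ throughout $\dom$. The content of the lemma is therefore the lower bound, \emph{uniform in $n$}. The plan is to establish the quantitative statement that there is a modulus $F$, independent of $n$, with $F(t)\to0$ as $t\to0^+$, such that
\[
1-|\tilde{u}(g_n)(z)|\le F(\dist(z,\p\dom)) .
\]
Choosing $\delta'$ with $F(\delta')\le1-a$ then yields $|\tilde u(g_n)(z)|\ge a$ for $\dist(z,\p\dom)<\delta'$.

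The first ingredient is a \emph{common} VMO modulus. For a boundary arc $I$ write $(g_n)_I=\frac1{|I|}\int_I g_n\,d\sigma$ and set
\[
\omega(t):=\sup_n\,\sup_{|I|\le t}\left(\frac1{|I|}\int_I|g_n-(g_n)_I|^2\,d\sigma\right)^{1/2}.
\]
Using the triangle inequality for mean oscillation, $\omega_{g_n}(t)\le\omega_g(t)+C\,[g_n-g]_{\mathrm{BMO}}$, together with the BMO convergence $g_n\to g$ (and the equivalence of the $L^1$ and $L^2$ mean-oscillation moduli on $\mathrm{VMO}$ via John--Nirenberg), one checks $\omega(t)\to0$ as $t\to0^+$; this is precisely where convergence in $\mathrm{VMO}$ — not mere membership — is used, and it makes every subsequent estimate uniform in $n$. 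The second ingredient is that short-arc averages of an $\S^1$-valued map are nearly unimodular: since $|g_n|\equiv1$, the variance identity gives, for every arc $I$,
\[
1-|(g_n)_I|^2=\frac1{|I|}\int_I|g_n-(g_n)_I|^2\,d\sigma\le\omega(|I|)^2 ,
\]
so $|(g_n)_I|\ge\sqrt{1-\omega(|I|)^2}$ uniformly in $n$.

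It remains to compare the harmonic extension near the boundary with such an average. Fix $z\in\dom$ with $t:=\dist(z,\p\dom)$ small and nearest boundary point $\zeta$, and write $\tilde u(g_n)(z)=\int_{\p\dom}P(z,\xi)g_n(\xi)\,d\sigma(\xi)$ with $\int_{\p\dom}P(z,\xi)\,d\sigma=1$. After flattening $\p\dom$ near $\zeta$ (legitimate since $\dom$ is smooth), $P(z,\cdot)$ is comparable up to uniform constants to the half-plane Poisson kernel $s\mapsto\frac1\pi\,t/(s^2+t^2)$. Letting $I_0$ be the arc of length $\sim t$ centred at $\zeta$ and $I_k$ the dyadic arcs of length $\sim2^kt$, the decay $P(z,\cdot)\lesssim2^{-2k}/t$ on $I_{k+1}\setminus I_k$, the telescoping bound $|(g_n)_{I_{k+1}}-(g_n)_{I_k}|\le C\,\omega(2^{k+1}t)$, and Cauchy--Schwarz give
\[
\bigl|\tilde u(g_n)(z)-(g_n)_{I_0}\bigr|\le C\sum_{k\ge0}2^{-k}(k+1)\,\omega(2^kt)\le C'\sup_{0<r\le\delta'}\omega(r).
\]
Combining this with the lower bound for $|(g_n)_{I_0}|$ from the variance identity produces the desired modulus $F(t)\to0$, uniform in $n$, and closes the proof.

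The soft steps (the uniform modulus $\omega$ and the variance identity) are routine; \emph{the main obstacle is the last step}, namely the honest Poisson-kernel tail estimate on a \emph{general} smooth domain, where $P$ is not explicit. The two devices that make it go through are the comparison of $P(z,\cdot)$ with the half-plane kernel after boundary flattening — valid with uniformly bounded constants for smooth $\p\dom$ — and the dyadic-arc decomposition, which turns the polynomial decay of the Poisson kernel into a convergent geometric series tested against the vanishing modulus $\omega$. This is exactly the mechanism of Theorem A3.2 in \cite{BrezisNirenberg2}, the only new point here being that the common modulus $\omega$ renders the conclusion uniform along the sequence $(g_n)$.
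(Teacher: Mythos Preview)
The paper does not supply a proof of this lemma: it is quoted verbatim as the reformulation by Berlyand--Mironescu of Theorem~A3.2 in \cite{BrezisNirenberg2} and used as a black box in the proof of Lemma~\ref{closenessto1}. So there is no ``paper's own proof'' to compare against; your write-up is essentially a sketch of the Brezis--Nirenberg argument itself, with the extra observation (which is the whole point of the present formulation) that VMO convergence of $(g_n)$ provides a \emph{common} mean-oscillation modulus $\omega$, making every estimate uniform in $n$.

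Your argument is correct in outline and matches the standard proof: the maximum principle for the upper bound, the variance identity $1-|(g_n)_I|^2=\frac1{|I|}\int_I|g_n-(g_n)_I|^2$ for $\S^1$-valued maps, and the dyadic Poisson-kernel decomposition. One small point of sloppiness: the bound you write at the end, $C'\sup_{0<r\le\delta'}\omega(r)$, is not the right modulus---what you actually get is $F(t)=C\sum_{k\ge0}2^{-k}(k{+}1)\,\omega(2^kt)$, and you then need the (easy) remark that this tends to $0$ as $t\to0^+$ because $\omega$ is bounded and vanishes at $0^+$ (split the sum at $k\sim\log(1/t)$). With that correction the sketch is sound.
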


\begin{proof}[Proof of Lemma \ref{closenessto1}]
Let $u_\e$ be a minimizer of $E_\e$ in $\I_{p,p}$ for $\e$ large enough. We write
$u_\e=v_\e+w_\e$ with $w_\e$ which satisfies
\begin{equation}
\left\{
\begin{array}{rclll}
-\Delta w_\e&=&\dfrac{1}{\e^2}u_\e(1-|u_\e|^2)\ & \text{in} \ \dom \\
w_\e &=&0 \ &\text{on} \ \partial \dom
\end{array}
\right.
\end{equation}
and $v_\e$ the harmonic extension of $\tr _{\partial \dom}{u_\e}$, {\it i.e.},
\begin{equation}
\left\{
\begin{array}{rclll}
\Delta v_\e &=&0 \ & \text{in} \ \dom \\
v_\e &=&u_\e \ & \text{on} \ \dom
\end{array}.
\right.
\end{equation}
To estimate $\|\nabla w_\e \|_{L^\infty(\dom)}$ we use the standard elliptic estimate 

\begin{lem}[Lemma A.2. in \cite{BBH0}]\label{ellipticestimate}
Let $w\in C^2(\overline{\dom})$ satisfy 
\begin{equation}
\left\{
\begin{array}{rcll}
\Delta w &=& f \ \text{in} \ \dom \\
w & = & 0 \ \text{on} \ \partial \dom
\end{array}
\right..
\end{equation}
\noindent Then, for some constant $C_\dom$ depending only on $\dom$, we have:
\begin{equation}
\|\nabla w \|_{L^\infty(\dom)} \leq C_\dom \|w\|_{L^\infty(\dom)}^{1/2} \|f\|_{L^\infty(\dom)}^{1/2}.
\end{equation}
\end{lem}

Thanks to  Lemma \ref{ellipticestimate} we obtain [note that $\|w_\v\|_{L^\infty(\dom)}\leq\|u_\v\|_{L^\infty(\dom)}+\|v_\v\|_{L^\infty(\dom)}\leq2$]
\begin{equation}
\|\nabla w_\e \|_{L^\infty(\dom)} \leq \sqrt{2}C_\dom \times \frac{1}{\e} 
\end{equation}
where $C_\dom$ is a constant  depending only on $\dom$. 

Thus, since $w_\e=0$ on $\partial \dom$ we obtain that there exists a constant $C'_\dom$ s.t. 
\begin{equation}
|w_\e(z)| \leq C'_\dom\frac{1}{\e}\dist(z,\partial \dom).
\end{equation}
We note that, up to a subsequence, $ \tr_{\partial \dom}{u_\e} \rightarrow \tr_{\partial \dom}{u_\infty}$ strongly in $H^{1/2}(\partial \dom)$ because $ {u_\e} \rightarrow u_\infty$ strongly in $H^1(\dom)$. Since $H^{1/2}{\hookrightarrow} \text{VMO}$ in 1D we can apply Lemma \ref{VMO} to obtain that for all $\eta >0$ there exists $\delta'$ and $\e_0$ s.t. for all $\e \geq \e_0$
$$1-\frac{\eta}{2} \leq |v_\e| \leq  1, \ \ \text{if} \ \dist(z,\partial \dom)<\delta'.$$
Hence we find that 
\begin{eqnarray}
 1 \geq |u_\e(z)| & \geq &|v_\e(z)|-|w_\e(z)| \nonumber \\
 & \geq & 1-\frac{\eta}{2}- C'_\dom\frac{1}{\e}\dist(z,\partial \dom), \ \text{if} \ \e \geq \e_0 \ \text{and} \ \dist(z,\partial \dom) < \delta' \nonumber \\
 & \geq & 1-\eta \nonumber
\end{eqnarray}
if $\dist(z,\partial \dom) < \delta:= \min\{ \delta',\displaystyle{ \frac{\eta\e_0}{C'_\dom}\}}$.
\end{proof}

\noindent{\bf Step 2.} Lifting close to $\p\dom$\\

Now thanks to Lemma \ref{closenessto1} we know that,  for some $\delta >0$ and for sufficiently large $\v$, $u_\e$ does not vanish in 
\[
\dom^+_\delta:=\{z \in \dom\,|\,\dist(z,\partial \Omega)<\delta\}
\]
 nor in 
 \[
 \dom^{-}_\delta:=\{ z \in \dom \,| \,\dist(z,\partial \omega)<\delta\}.
 \]
  We set $\rho_\e =|u_\e|$ and $\rho_\infty=|u_\infty|$. Note that up to consider a smaller value for $\delta$ we may assume that $|u_\infty|\geq 1-\eta$ in $\dom^+_\delta\cup\dom^-_\delta$ (because $u_\infty$ is smooth in $\overline{\dom}$, see Lemma 4.4 \cite{BeMi0}).
  
 Therefore we can write $u_\infty=\rho_\infty e^{\imath \varphi}$, where $\varphi$ is a locally defined harmonic function and $\nabla \varphi$ is globally defined. 
 
 In $\dom^{+}_\delta$ we have that 
 \[
 \deg\left(\frac{u_\e}{u_\infty},\partial \Omega\right)=0\text{ and }\deg\left(\frac{|u_\infty|u_\e}{|u_\e|u_\infty},\partial \dom^+_\delta \setminus \partial \Omega\right)=0.
\] We can thus find $\psi_\e \in H^1(\dom^+_\delta,\R)$ s.t.  $u_\e=\rho_\e e^{\imath(\varphi +\psi_\e)}=\rho e^{\imath(\varphi + \psi)}$ in $\dom^+_\delta$. The same is true in $\dom^-_\delta$. In $\dom^{\pm}_\delta$ the Ginzburg-Landau equation is then equivalent to the following equations on $\rho$ and $\psi$:

%Now because $u_\infty$ does not vanish in $\dom$, thanks to lemma \ref{closenessto1} and thanks to the fact that $u_\e \rightarrow u_\infty $ in $C^l_{\text{loc}}$ for all $l$ we obtain that $u_\e$ does not vanish either in $\dom$. We can thus set $\rho_\e=|u_\e|$. We observe that $u_\infty=\rho_\infty e^{ \imath \varphi}$ where $\varphi$ is a locally defined harmonic function and $\nabla \varphi$ is globally defined. This is due to the conformal invariance of the Dirichlet energy and to the fact that $u_\infty^p=\frac{1}{1+\r^p}(r^p+\frac{\r^p}{r^p})e^{\imath p\theta}$ in a circular annulus (with $r,\theta$ the polar coordinates). Since we have $\frac{u_\e}{u_\infty}\in \I_{0,0}$ one can find $\psi_\e \in H^1(A,\R)$ s.t. $u_\e=\rho_\e e^{\imath(\varphi +\psi_\e)}=\rho e^{\imath(\varphi + \psi)}$. The Ginzburg-landau equation is then equivalent to the two following equations on $\rho$ and $\psi$:

\begin{equation}\label{rhoequation}
\left\{
\begin{array}{rclll}
-\Delta \rho&=&\frac{1}{\e^2}\rho(1-\rho^2)-\rho|\nabla (\varphi +\psi)|^2 & \text{in} \ \dom^{\pm}_\delta \\
\rho&=&1 & \text{on} \ \partial \dom \\
\end{array}
\right.,
\end{equation}

\begin{equation}\label{phiequation}
\left\{
\begin{array}{rclll}
-\dive(\rho^2\nabla \psi)&= &\dive(\rho^2 \nabla \varphi)= &2\rho\nabla \rho\cdot \nabla \varphi& \text{in} \ \dom \\
{\partial_\nu \psi}& =&0 & & \text{on} \ \partial\dom
\end{array}
\right..
\end{equation}
Note that the last equation can be rewritten as 
\begin{equation}\label{phiequation}
\begin{array}{rclll}
\Delta \psi & =&\dive[(1-\rho^2)\nabla (\varphi +\psi) ] & \text{in} \ \dom^{\pm}_\delta.
\end{array}
\end{equation}

\noindent \textbf{Step 3.} $\nabla \psi_\e$ is bounded in $L^4(\dom^{\pm}_\delta)$\\

Fix $z_0\in \partial \dom$. In order to simplify the proof we assume that $z_0=0$, $\dom \subset \{z ; \text{Im}(z) >0 \}$ and $\partial \dom \subset \R$ in a neighborhood $U$ of $z_0$. (These assumptions are not essential for carrying out the arguments below but make the redaction easier). Let $r>0$ to be determined later s.t. $B_r:=B(0,r) \subset U$. Using the Schwarz reflection we extend $\rho, \psi$ and $F=(1-\rho^2)\nabla \psi=(F_1,F_2)$ to $B_r \setminus \overline{\dom}$ by setting for $z \in B_r \setminus \overline{\dom}$
$$\tilde{\rho}(z)=\rho(\overline{z}), \ \ \tilde{\psi}(z)=\psi(\overline{z}), \ \ \tilde{F}(z)=(F_1(\overline{z}),-F_2(\overline{z})).$$
We can then show that $\tilde{\psi}$ is a solution of 
\begin{equation}
\Delta \tilde{\psi}=\dive \tilde{F}(z) \ \ \ \text{in} \ B_r.
\end{equation}
By standard elliptic estimates (see Theorem 7.1 in \cite{giaquinta2013introduction}), we have
\begin{equation}
\|\nabla \tilde{\psi}\|_{L^4(D_r)} \leq C_4\left(  \|\tra_{\partial B_r} \tilde{\psi}\|_{W^{1-\frac{1}{4},4}(\partial B_r)} +\|\tilde{F}\|_{L^4(B_r)} \right).
\end{equation}
By scaling, the constant $C_4$ does not depend on $r$. We also have that 
\begin{eqnarray}
\|\tilde{F}\|_{L^4(B_r)} & \leq & \| 1-\tilde{\rho}\|_{L^\infty(B_r)} \| \nabla \tilde{\psi} \|_{L^4(B_r)}. 
\end{eqnarray}

\noindent Thanks to Lemma \ref{closenessto1} we can choose $r$ small enough s.t.  for  $\e$ large enough we have $\| 1-\tilde{\rho}\|_{L^\infty(B_r)}<\frac{1}{2C_4}$. Hence we obtain that 
\begin{equation}
\|\nabla \tilde{\psi}\|_{L^4(B_r)} \leq 2C_4 \|\tra_{\partial B_r} \tilde{\psi}\|_{W^{1-\frac{1}{4},4}(\partial B_r)}.
\end{equation}
\noindent We can prove that, for $r$ small enough and along a subsequence we have $\tra_{\partial B_r} \tilde{\psi}$ is bounded in $W^{1-\frac{1}{4},4}(\partial B_r)$. Indeed, along a subsequence,
$\tra_{\partial B_r \cap \dom} \tilde{\psi}$ is bounded in $H^1(\partial B_r \cap \dom) $ for some $r>0$ s.t. $B_r \subset U$ thanks to the coarea formula and to the fact that $ \psi$ is bounded in $H^1(\dom)$ (since $|\nabla \psi| \leq |\nabla u_\e|$ in $\dom$). Using the [continuous] Sobolev injection $H^1(\partial B_r) \hookrightarrow W^{1-\frac{1}{4},4}(\partial B_r)$ we obtain the result. 
Thus (up to a subsequence) $\| \nabla \psi _\e \|_{L^4(B_r \cap \dom)}$ is bounded for $r$ small enough. \\

\noindent  Repeating the previous argument we 
find that: for all $z_0 \in \partial \dom$ there exist $r_{z_0}>0$ and $M_{z_0}>0$ s.t.  (up to a subsequence) $\| \nabla \psi _\e \|_{L^4(B_{r_{z_0}} \cap \dom)}\leq M_{z_0}$.

\noindent Thanks to the fact that $\partial \dom$ is compact we deduce that there exist $\delta_1>0$, a subsequence and $M$ s.t., letting $\dom_{\delta_1}=\{z \in \dom \ | \ \dist(z,\partial \dom) <\delta_1\}$, we have 
$$\|\nabla \psi_\e \|_{L^4(\dom_{\delta_1})} \leq M, \ \ \text{for }  \e \ \text{large enough}. $$
($M$ is independent of $\e$) 

\noindent Now since $u_\e \rightarrow u_\infty$ in $C^l_{\text{loc}}$ for all $l\in \mathbb{N}$ we obtain that $\nabla \psi_\e$ is bounded in $L^4(\dom^+_\delta)$ and in $L^4(\dom^-_\delta)$.\\

\noindent \textbf{Step 4.} Elliptic estimates and a bootstrap argument\\

We work in $\dom^+_\delta$ but the argument is the same for $\dom^-_\delta$. We can use the equation satisfied by $\rho_\e$ \eqref{rhoequation}, the fact that $\nabla \varphi$ is bounded in $L^\infty$ (see Lemma 4.4 in \cite{BeMi0}) and the previous step to obtain that $\Delta \rho_\e$ 
is bounded in $L^2(\dom^{+}_\delta)$. Hence the elliptic regularity implies that $\rho_\e$ is 
bounded in $W^{2,2}(\dom^{+}_{\delta/2})$. Indeed one can multiply $\rho$ by a cut-off 
function $\chi \in C^\infty(\dom^+_\delta)$ s.t. $\chi \equiv 1$ in $\dom^{+}
_{{\delta}/{2}}$ and $\chi=0$ on $\partial \dom^+_\delta \setminus \partial \Omega$. We can 
then see that $\Delta (\chi \rho)$ is bounded in $L^2(\dom^{+}_\delta)$ and since the boundary 
conditions are adapted to global regularity we deduce that $\chi \rho$ is bounded in $W^{2,2}
(\dom^+_\delta)$. Using the fact that $\chi \equiv 1$ in $\dom^+_{{\delta}/{2}}$ we obtain 
the result. Now since $W^{1,2} \underset{{\rm cont}}{\hookrightarrow} L^p$ for all $1<p<+\infty$ we have that $\nabla 
\rho$ is bounded in $L^p(\dom^+_{\delta /2})$ for all $1<p<+\infty$. \\

\noindent We now use the equation satisfied by $\psi_\e$, written as 
\begin{equation}
\Delta \psi_\e =\frac{2}{\rho_\e} \nabla \rho_\e \cdot \nabla (\psi_\e +\varphi).
\end{equation}
We note that ${1}/{\rho_\e}$ and $\nabla \varphi$ are bounded in $L^\infty(\dom^+_\delta)$ and we deduce that $\Delta \psi_\e$ is bounded in $L^q(\dom^+_\delta)$ for all $1<q<+\infty$. Hence using a similar argument as before with a cut-off function we can show that $\psi_\e$ is bounded in $W^{2,q}(\dom^+_{\delta/2})$ for all $1<q<+\infty$. In particular $\nabla \psi_\e$ is bounded in $W^{1,q}(\dom^+_{\delta/2})$ for all $1<q<+\infty$. Using the fact that $W^{1,q} \cap L^\infty$ is an algebra (see \textit{e.g.} Proposition 9.4 p.269 in  \cite{Functionalanalysis}) we find that $\Delta \rho_\e$ is bounded in $W^{1,q}(\dom^+_{\delta/2})$ for all $1<q<+\infty$ and thus $\rho_\e$ is bounded in $W^{3,q}(\dom^+_{\delta/2})$. By  a straightforward induction we obtain that  $$\rho,\psi, \  \text{are bounded in} \ W^{m,q}(\dom^+_{\delta/2}) \ \text{for all } m\geq 2, \ 1<q<+\infty.$$

\noindent Thanks to Sobolev injections for any $l \in \mathbb{N}$ and any $0<\gamma<1$ we can choose $m \geq 1$ 
and $1<q<+\infty$ s.t. $k=m-1$ and $1-\frac{2}{q}> \beta$ we then have $W^{m,q} \hookrightarrow C^{l,\gamma}
(\overline{\dom^+_{\delta/2}})$ and this embedding is compact. We thus have that, up to a subsequence, $ 
u_\e=\rho_\e e^{\imath (\varphi +\psi_\e)} \rightarrow u$ in $C^{l,\gamma}$ for some $u$ as $\e \rightarrow 
\infty$ in $\dom^+_{\delta/2}$. But by Lemma \ref{conv0} we have $u=u_\infty$. Using the $C^l_\text{loc}$ 
convergence, we can finally conclude that $u_\e \rightarrow u$ in $C^l(\overline{\dom})$ for all $l \in 
\mathbb{N}$. 

\section{Non Existence Result}\label{ProofThmNonExist}
This section is dedicated to the proof of Theorem \ref{ThmNonExist}. We fix $p,q\in\N^*$, $p\neq q$. For the simplicity of the presentation we assume that $p>q$. The case $p<q$ is similar. 

We adapt here the strategy of Misiats [used to prove Theorem 2 in \cite{Misiats1}].

We denote $d:=p-q\in\N^*$ and $\Ar:=B(0,1)\setminus\overline{B(0,\r)}$ where $R\in]0,1[$. We are going to prove that for $R$ sufficiently close to $1$ and large $\v$ there is no minimizer for $m_\v(p,q)$. 
\subsection{Strategy of the proof}

By Theorem \ref{P-GolBer}, there is $\r^{(1)}_{q}$ [$\r^{(1)}_{q}$ is independent of $\v$] s.t. $m_\v(q,q,\Ar)$ is attained by the radial Ginzburg-Landau solution $u_\v=\rho_\v{\rm e}^{\imath q\theta}$ [here $\rho_\v=\rho_{\v,q}$ depends also on $q$ see \eqref{EqRadEpsSol}\&\eqref{ModulusEqEpsRadSol}].

Because $\rho_\v>0$ in $\Ar$, it is easy to see that 
\[
\J_{p,q}=\{\rho_\v w\,|\, w\in \J_{p,q}\}.
\]
Thus we have
\begin{equation}\label{NonExs1}
m_\v(p,q)=\inf_{w\in\J_{p,q}}E_\v(\rho_\v w).
\end{equation}
By Lemma 21 in \cite{BeRy}, we have for $w\in\J$
\begin{equation}\label{NonExs2}
E_\v(\rho_\v w)=E_\v(u_\v)+L_\v(w)
\end{equation}
with
\begin{equation}\label{NonExs3}
L_\v(w)=\dfrac{1}{2}\int_\Ar\rho_\v^2|\n w|^2-{q^2}{}\rho_\v^2|\n\theta|^2|w|^2+\dfrac{1}{2\v^2}\rho_\v^2(1-|w|^2)^2.
\end{equation}
By combining \eqref{NonExs1}, \eqref{NonExs2} and \eqref{NonExs3} we get
\begin{equation}\label{NonExs4}
m_\v(p,q)=E_\v(u_\v)+\inf_{w\in\J_{p,q}}L_\v(w).
\end{equation}
We argue by contradiction and we assume that 
$$\text{there is} \ \v=\v_n\uparrow\infty \ \text{s.t.} \ m_\v(p,q) \ \text{is attained by} \ \rho_\v w_\v.$$

Our strategy consists in proving that for $\r$ sufficiently close to $1$, we have
\begin{equation}\label{NonExs5}
L_\v(w_\v)>d\pi.
\end{equation}
Estimate \eqref{NonExs5} with \eqref{NonExs4}\&Proposition \ref{Property1Meps}.2 implies that $m_\v(p,q)>m_\v(q,q)+d\pi$  which is in contradiction with Proposition \ref{Property1Meps}.2.\\

%The proof of \eqref{NonExs5} is done arguing by contradiction : we assume that there exists a sequence $\v=\v_n\uparrow\infty$ s.t. $m_\v(p,q)$ is reached. Thus $L_\v$ admits a minimizer $w_\v$ in $\J_{p,q}$. 

The key argument is a minoration of $L_\v(w_\v)$ by a sum of infinitely many {\it infima} of functional (see \eqref{DecSumTilL}). These functionals have the form $|a_k|^2F_k(\cdot)$ where the $a_k$'s are the Fourier coefficients of $\tr_{\S^1}(w{\rm e}^{-\imath q\theta})$.  The $F_k$'s are defined in $H^1(]\r,1[,\C)$ and we imposed Dirichlet boundary condition for $r=1$ whereas we let the other boundary $r=\r$ free (see \eqref{MinProM}). Note that since the boundary $r=\r$ is free we obtained homogeneous Neumann boundary condition for $r=\r$.

By using some properties of $(a_k)_k\in\C^\Z$ we apply Lemma 3 in \cite{Misiats1}  (see  Proposition \ref{PropCoefV}.3 below) in order to obtain that for large $\v$ we have \eqref{NonExs5}.

\subsection{Asymptotic analysis of $v_\v=w_\v{\rm e}^{-\imath q\theta}$}
The goal of this subsection is to prove that $\tr_{\S^1} (w_\v{\rm e}^{-\imath q\theta})\to 1$ in $L^2(\S^1)$.\\

 By Lemma \ref{LFond1}, up to pass to a further subsequence, there is $P\in\A_{(p,q)}$ and $u_\infty\in\J_P$ s.t. $\rho_\v w_\v\weak u_\infty$ in $H^1$. Moreover $u_\infty$ minimizes $m_\infty(P)$,
 \begin{equation}\label{ContradAlterna}
 m_\infty(p,q)=m_\infty(P)+\pi|P-(p,q)|,
\end{equation}
and we have $P=(q',q')$ for some $0\leq q' \leq q$ from Proposition \ref{PropEpsInftHR}. However for $R> R_q^{(1)}$ we have that $q'=q$. Indeed, recall that for $R> R_q^{(1)}$, $m_\infty(q,q)$ is uniquely attained by the radial harmonic map and, according to the discussion in Section 3.3 it holds that for all $0\leq r<q$ we have 
$$m_\infty(q,q)< m_\infty(r,r)+2\pi(q-r).$$
But if $q'<q$ then we find that (using Lemma \ref{LFond1}) 
\[
m_\infty(p,q)=m_\infty(q',q')+\pi(p-q)+2\pi(q-q') <m_\infty(q,q)+\pi(p-q)
\]
 which is in contradiction with Proposition \ref{PropEpsInftHR}.

%By Proposition \ref{PropEpsInftHR}, there is $0\leq q'\leq q$ s.t. $P=(q',q')$. We may prove that  there exists $\r_q^{(1)}\leq \r_q^{(2)}<1$ s.t. for $\r_q^{(2)}<\r<1$ we have $q'=q$. 
%
%Indeed, we know from Lemma \ref{LFond1} that  $m_\infty(p,q)=m_\infty(P)+\pi|P-(p,q)|$. On the other hand, there is $1>\r_q^{(2)}\geq \r_q^{(1)}$ s.t. for $\r_q^{(2)}<\r<1$ and $0\leq q'<q$ we have  $m_\infty(q,q)<m_\infty(q',q')+2\pi(q-q')$. This is due to the fact that for $R$ sufficiently close to $1$ then the minimizer of $m_\infty(\tilde{q},\tilde q)$ [with $\tilde q\in\{0,...,q\}$] are the radial solutions $u_{\infty,\tilde q}$ (Theorem \ref{P-GolBer}) and for $\tilde q\in\{0,...,q-1\}$ we have 
%\[
%m_\infty(q,q)=E_\infty(u_{\infty, q})<E_\infty(u_{\infty,\tilde q})+2\pi(q-\tilde q)=m_\infty(\tilde q,\tilde q)+2\pi(q-\tilde q).
%\]
%Consequently, if $P=(q',q')$ with $0\leq q'<q$ then we have 
%\[
%m_\infty(p,q)=m_\infty(P)+\pi|P-(p,q)|>m_\infty(q,q)+2\pi|(q,q)-(p,q)|.
%\]
%Clearly, from this estimate and with Proposition \ref{PropStandBub}, we may construct, for large $\v$, a map $\tilde{u}\in\J_{p,q}$ s.t. $E_\v(\tilde{u})<m_\v(p,q)$ which is in contradiction with \eqref{ContradAlterna}.\\

Consequently, up to multiply by a constant of $\S^1$, we have that $u_\infty=u_{\infty,q}$ (defined in \eqref{RadHarmSol}) where
\[
u_{\infty,q}(r{\rm e}^{\imath\theta})=\frac{1}{1+\r^q}\left(r^q+\frac{\r^q}{r^q}\right){\rm e}^{\imath q\theta}.
\]

We now write $w_\v\in\J_{p,q}$ as $w_\v=v_\v{\rm e}^{\imath q\theta}$ with $v_\v\in\J_{d,0}$. 

\noindent From the previous arguments we know that $\rho_\v w_\v=\rho_\v v_\v{\rm e}^{\imath q\theta}\weak u_{\infty,q}=\rho_{q}{\rm e}^{\imath q\theta}$ in $H^1(\dom)$ [here we write $\rho_q$ instead of $\rho_{\infty,q}$].

\noindent Moreover, from Lemma \ref{conv0}, we have $\rho_\v {\rm e}^{\imath q\theta}\to \rho_{q}{\rm e}^{\imath q\theta}$ in $H^1(\dom)$.  Consequently $v_\v\weak1$ in $H^1(\dom)$. Therefore $\tr_{\S^1} v_\v\to 1$ in $L^2(\S^1)$.
\subsection{Reformulation of $L_\v(w_\v)$ and a minoration of $L_\v(w_\v)$}

In order to get a nice lower bound for $L_\v(w_\v)$ we first reformulate $L_\v(w_\v)$.\\

The argument is based on the Fourier expansion of $\tr_{\S^1} v_\v$:
\[
\tr_{\S^1} v_\v({\rm e}^{\imath\theta})=\sum_{k\in\Z}a_k {\rm e}^{\imath k\theta}.
\]
We have the following proposition:
\begin{prop}\label{PropCoefV}
\begin{enumerate}
\item $\displaystyle\sum_{k\in\Z}k|a_k|^2=d$.
\item $\displaystyle\sum_{k\in\Z^*}|a_k|^2\to0$ when $\v\to\infty$.
\item Let $k_0\in\N^*$, there is $C_1$ (depending only on $k_0$) and a sequence $c_\v>0$ (depending only on $k_0$ and $\v$) s.t. $c_\v\to1$ when $\v\to\infty$ satisfying for $k=1,...,k_0$.
\[
|a_k|\leq c_\v|a_{-k}|+C_1\sum_{l\in\Z^*}|a_l|^2.
\]
\end{enumerate}
\end{prop}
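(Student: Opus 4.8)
The plan is to prove the three assertions of Proposition \ref{PropCoefV} about the Fourier coefficients $(a_k)_{k\in\Z}$ of $\tr_{\S^1} v_\v$, where $v_\v = w_\v {\rm e}^{-\imath q\theta} \in \J_{d,0}$. The three statements are of quite different character, so I would treat them separately.

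\textbf{Assertion (1): the degree constraint.}
First I would unwind what the condition $v_\v \in \J_{d,0}$ means for the boundary trace on $\S^1=\partial B(0,1)$. Since $v_\v = w_\v {\rm e}^{-\imath q\theta}$ and $\deg(w_\v,\partial\O) = p$, $\deg(w_\v,\partial\o)=q$, subtracting the degree $q$ contribution of ${\rm e}^{\imath q\theta}$ on each boundary component gives $\deg(v_\v,\S^1)=p-q=d$ on the outer circle (and degree $0$ on the inner one, whence the notation $\J_{d,0}$). The plan is then to compute the degree directly from the Fourier expansion. For $\tr_{\S^1}v_\v({\rm e}^{\imath\theta})=\sum_k a_k {\rm e}^{\imath k\theta}$, the degree formula $\deg = \frac{1}{2\pi}\int_{\S^1} v\wedge \p_\tau v\,{\rm d}\tau$ on the unit circle becomes, after inserting the series and using orthogonality of $\{{\rm e}^{\imath k\theta}\}$,
\[
\deg(v_\v,\S^1)=\sum_{k\in\Z}k|a_k|^2.
\]
This is the classical identity expressing the winding number of an $H^{1/2}(\S^1;\C)$ map in terms of its Fourier data; I would verify it by a direct term-by-term computation, taking care that the pairing is the $H^{1/2}$--$H^{-1/2}$ duality mentioned in the introduction, so that the sum converges. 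Since the left side equals $d$, assertion (1) follows.

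\textbf{Assertion (2): vanishing of the nonzero modes.}
From the previous subsection we already have $\tr_{\S^1} v_\v \to 1$ in $L^2(\S^1)$. The constant function $1$ has Fourier expansion with only the $k=0$ term equal to $1$ and all other coefficients zero. By Parseval, $\|\tr_{\S^1}v_\v - 1\|_{L^2(\S^1)}^2 = 2\pi\big(|a_0-1|^2 + \sum_{k\in\Z^*}|a_k|^2\big)$, so $L^2$ convergence forces both $a_0\to1$ and $\sum_{k\in\Z^*}|a_k|^2\to0$. This is the shortest of the three and is essentially a restatement of the limit already established.

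\textbf{Assertion (3): the symmetry inequality.}
This I expect to be the main obstacle, as it is the substantive input that makes the contradiction argument work. The inequality $|a_k|\le c_\v|a_{-k}|+C_1\sum_{l\in\Z^*}|a_l|^2$ couples the positive mode $a_k$ to its reflected partner $a_{-k}$, and the strategy (following Misiats, Lemma 3 in \cite{Misiats1}) is presumably to exploit that $v_\v$ minimizes the reduced functional $L_\v$ together with the structure of the Euler--Lagrange equation, comparing $v_\v$ against competitors obtained by a reflection or a modification of a single Fourier mode. The plan is to use that $\rho_\v w_\v$ solves \eqref{semi-stiff 1} (hence $v_\v$ solves the corresponding equation with the weight $\rho_\v^2$), restrict to a neighborhood of the outer boundary where $\rho_\v$ is uniformly close to the radial profile, and perform a mode-by-mode analysis: the competitor swapping $a_k \leftrightarrow a_{-k}$ must not lower the energy, which after linearization yields an inequality of the stated form, with the quadratic tail $\sum|a_l|^2$ absorbing the nonlinear and cross-mode error terms. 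The constant $c_\v\to1$ arises from the ratio of the radial weights attached to the $k$ and $-k$ modes in the quadratic form, which degenerate to equality as $\v\to\infty$ (equivalently as $\rho_\v\to\rho_{\infty,q}$). The delicate points will be: producing admissible competitors that stay in $\J_{d,0}$, controlling the higher modes uniformly so that $C_1$ depends only on $k_0$, and tracking the $\v$-dependence of $c_\v$; I would lean on the uniform convergence $\rho_\v\to\rho_q$ from \eqref{UnifConvMod} and on Lemma \ref{conv0} to make these estimates uniform.
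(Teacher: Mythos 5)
Your treatment of assertions (1) and (2) is correct and coincides with the paper's own (very short) proof: the paper disposes of (1) by invoking the degree formula $\deg_{\S^1}(v)=\sum_k k|a_k|^2$ and of (2) by Parseval together with the convergence $\tr_{\S^1}v_\v\to1$ in $L^2(\S^1)$ established in the preceding subsection, exactly as you do.

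The genuine gap is in assertion (3). The paper does not prove it by a variational argument: it simply invokes Corollary 2 of \cite{Misiats1}, after checking that Lemma 3 of \cite{Misiats1} applies in the present setting. Your proposed replacement --- an energy comparison against a competitor in which $a_k$ and $a_{-k}$ are exchanged --- breaks down at exactly the points you list as ``delicate'' but do not resolve. First, the swapped function is not admissible: unimodularity of the trace $\sum_k a_k{\rm e}^{\imath k\theta}$ is equivalent to the convolution identities $\sum_{m}a_{m+k}\overline{a_m}=0$ for all $k\neq0$, and exchanging one pair of coefficients destroys them, so the competitor does not belong to $\J$ at all. Second, the exchange changes $\sum_j j|a_j|^2$, i.e.\ the outer boundary degree, so even a repaired competitor generically lies in a class different from $\J_{d,0}$, where the minimality of $v_\v$ yields no comparison whatsoever. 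Finally, the claim that ``after linearization'' one obtains an inequality of the stated form is precisely the content of assertion (3); nothing in the sketch produces it, so the argument is circular at the decisive step.

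Note that the gap can in fact be closed elementarily, without \cite{Misiats1} and without any variational input. Since $\rho_\v\equiv1$ on $\p\Ar$ and $|\rho_\v w_\v|=1$ there, the trace $\tr_{\S^1}v_\v$ is $\S^1$-valued; hence $\sum_m|a_m|^2=1$ and $\sum_m a_{m+k}\overline{a_m}=0$ for every $k\neq0$. Isolating the terms $m=0$ and $m=-k$ in the latter identity and applying Cauchy--Schwarz to the remaining sum yields
\[
|a_k|\,|a_0|\leq|a_0|\,|a_{-k}|+\sum_{l\in\Z^*}|a_l|^2 ,
\]
and since $|a_0|^2=1-\sum_{l\in\Z^*}|a_l|^2\to1$ by assertion (2), one obtains for large $\v$ the inequality of (3) with $c_\v\equiv1$ and $C_1=2$, uniformly in $k$. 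A correct write-up should either argue this way or quote \cite{Misiats1} as the paper does; the mode-swapping comparison, as described, does not work.
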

\begin{proof}
The first assertion is the degree formula. The second assertion comes from the convergence $\tr_{\S^1} v_\v\to1$ in $L^2$. The third assertion is Corollary 2 in \cite{Misiats1} by noting that Lemma 3 in \cite{Misiats1} holds.
\end{proof}

We now go back to the $L_\v$ functional. Writing $w_\v=v_\v{\rm e}^{\imath q\theta}$ we have
\begin{eqnarray*}
L_\v(v{\rm e}^{\imath q\theta})&=&\dfrac{1}{2}\int_\Ar\rho_\v^2\left[q^2|\n\theta|^2|v|^2+|\n v|^2+2q\n\theta\cdot(v\wedge\n v)\right]-\\&&\phantom{aaaaaaassssssss}-{q^2}{}\rho_\v^2|\n\theta|^2|v|^2+\dfrac{1}{2\v^2}\rho_\v^2(1-|v|^2)^2
\\
&=&\dfrac{1}{2}\int_\Ar\rho_\v^2|\n v|^2+2q\rho_\v^2\n\theta\cdot(v\wedge\n v)+\dfrac{1}{2\v^2}\rho_\v^2(1-|v|^2)^2
\\&=:&\tilde{L}_\v(v)+\dfrac{1}{4\v^2}\int_\Ar\rho_\v^2(1-|v|^2)^2.
\end{eqnarray*}
We now focus on the $\tilde{L}_\v$ functional and we prove that for sufficiently large $\v$ and for $\r$ sufficiently close to $1$, we have
\begin{equation}\label{NonExs6}
 L_\v(w_\v)\geq \tilde{L}_\v(v_\v)>d\pi.
\end{equation}
To prove \eqref{NonExs6} we switch to polar coordinates (with a little abuse of notation) and we write
\[
v_\v(r,\theta)=\sum_{k\in\Z}a_k f_k(r){\rm e}^{\imath k\theta},\,r\in]\r,1[,\,\theta\in]0,2\pi[
\]
where $f_k\in H^1(]\r,1[,\C)$ is s.t. $f_k(1)=1$.

Note that the map $\rho_\v$ depends only on $r\in]\r,1[$. Therefore we have the following expansion:
\[
\tilde{L}_\v\left(\sum_{k\in\Z}a_kf_k(r){\rm e}^{\imath k\theta}\right)=\pi\sum_{k\in\Z}|a_k|^2\int_\r^1\rho_\v^2\left[r|f_k'|^2+\dfrac{k^2+2qk}{r}|f_k|^2\right].
\]
For $k\in\Z$, and $f\in H^1(]\r,1[,\C)$, we let 
\[
F_k(f)=\int_\r^1\rho_\v^2\left[r|f'|^2+\dfrac{k^2+2qk}{r}|f|^2\right]
\]
and 
\begin{equation}\label{MinProM}
m_k=\inf \left\{F_k(f) \ | \ f \in H^1(]R,1[,\C) \ \text{s.t.} \ f(1)=1\right\}
\end{equation}
%m_k=\inf_{\substack{f\in H^1(]\r,1[,\C)\\\text{s.t.}f(1)=1}}F_k(f).
\subsection{Minoration of $ \tilde{L}_\v(v_\v)$}
It is clear that we have
\begin{equation}\label{DecSumTilL}
L_\v(w_\v)\geq\tilde{L}_\v\left(\sum_{k\in\Z}a_kf_k(r){\rm e}^{\imath k\theta}\right)\geq \pi\sum_{k\in\Z}a_km_k.
\end{equation}
In order to get a lower bound for $m_k$ we use the following lemma:
\begin{lem}\label{COmpRhov}
For $\v>0$ we have $\rho_\v\geq\rho_q$ where $\rho_q(r)=\dfrac{1}{1+\r^q}\left(r^q+\dfrac{\r^q}{r^q}\right)$.
\end{lem}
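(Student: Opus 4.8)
The plan is to prove $\rho_\v\geq\rho_q$ by a comparison/maximum-principle argument applied to the difference $w:=\rho_\v-\rho_q$. First I would recall the two ODEs at hand: $\rho_\v=\rho_{\v,q}$ solves \eqref{ModulusEqEpsRadSol} and $\rho_q=\rho_{\infty,q}$ solves \eqref{ModulusEqRadSol}, both with Dirichlet data equal to $1$ at $r=\r$ and $r=1$. In particular $w(\r)=w(1)=0$. I would also recall that by construction $\rho_\v\in C^\infty([\r,1],[0,1])$, \emph{i.e.} $0\leq\rho_\v\leq1$ on $[\r,1]$; this is the radial translation of the maximum principle $|u_\v|\leq1$ for the Ginzburg--Landau equation (\emph{cf.} the proof of Proposition~\ref{Property1Meps} and \cite{BBH0}).

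Next I would subtract the two equations. Introducing the linear radial operator
\[
\mathcal{L}\phi:=-\phi''-\frac{\phi'}{r}+\frac{q^2}{r^2}\phi,
\]
we have $\mathcal{L}\rho_\v=\dfrac{\rho_\v}{\v^2}(1-\rho_\v^2)$ and $\mathcal{L}\rho_q=0$, so that
\[
\mathcal{L}w=\frac{\rho_\v}{\v^2}(1-\rho_\v^2)=:g\qquad\text{in }]\r,1[.
\]
Since $0\leq\rho_\v\leq1$, we have $\rho_\v\geq0$ and $1-\rho_\v^2\geq0$, hence $g\geq0$ on $]\r,1[$.

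Finally I would invoke the weak maximum principle for $\mathcal{L}$, whose zeroth-order coefficient $q^2/r^2$ is nonnegative. Arguing by contradiction, suppose $\min_{[\r,1]}w<0$. As $w=0$ on the boundary, the minimum is attained at an interior point $r_0\in]\r,1[$, where $w'(r_0)=0$, $w''(r_0)\geq0$ and $w(r_0)<0$. Evaluating,
\[
\mathcal{L}w(r_0)=-w''(r_0)+\frac{q^2}{r_0^2}w(r_0)<0,
\]
which contradicts $\mathcal{L}w=g\geq0$. Therefore $w\geq0$ on $[\r,1]$, that is $\rho_\v\geq\rho_q$.

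The argument is essentially routine; the only point requiring care is the nonnegativity of the forcing term $g$, which rests precisely on the two-sided bound $0\leq\rho_\v\leq1$ built into the definition of the radial Ginzburg--Landau solution after \eqref{EqRadEpsSol}. Once this is in hand, the sign of the zeroth-order coefficient of $\mathcal{L}$ makes the comparison principle immediate, so I do not anticipate any genuine obstacle.
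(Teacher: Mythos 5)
Your proof is correct, but it takes a genuinely different route from the paper's. You argue at the level of the ODEs: setting $w=\rho_\v-\rho_q$ and $\mathcal{L}\phi=-\phi''-\phi'/r+q^2\phi/r^2$, you get $\mathcal{L}w=\frac{\rho_\v}{\v^2}(1-\rho_\v^2)\geq0$ from the bound $0\leq\rho_\v\leq1$, and you conclude by evaluating at an interior negative minimum of $w$, where $w'=0$, $w''\geq0$, and the zeroth-order term $q^2w/r^2$ is strictly negative (here $q\in\N^*$ matters; for $q=0$ the strictness would be lost, but that case is trivial anyway). The paper instead argues variationally: it considers the open set $U=\{\rho_\v<\rho_q\}$, on whose boundary the maps $\rho_\v{\rm e}^{\imath q\theta}$ and $\rho_q{\rm e}^{\imath q\theta}$ have equal traces; harmonicity of $\rho_q{\rm e}^{\imath q\theta}$ gives $E_\infty(\rho_q{\rm e}^{\imath q\theta},U)\leq E_\infty(\rho_\v{\rm e}^{\imath q\theta},U)$, while the potential term is strictly smaller for $\rho_q$ on $U$, so swapping the two maps on $U$ strictly decreases $E_\v$, contradicting the minimality of $\rho_\v{\rm e}^{\imath q\theta}$ — a minimality which is available only through Theorem \ref{P-GolBer}, i.e.\ in the regime $\r_q<\r<1$. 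The trade-off is this: your maximum-principle argument is more elementary and more general, since it uses only the two boundary-value problems \eqref{ModulusEqEpsRadSol} and \eqref{ModulusEqRadSol} together with $0\leq\rho_\v\leq1$ (a fact the paper's proof also invokes), and therefore it is valid for every $\r\in]0,1[$ and every $\v>0$, with no appeal to Berlyand--Golovaty; the paper's swapping argument is purely energetic, never differentiates the equation pointwise, and is the kind of comparison that could survive in non-radial situations, but it is tied to the thin-annulus regime where the radial Ginzburg--Landau solution is known to minimize $m_\v(q,q)$.
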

\begin{proof}
Let $\v>0$ and let $U=\{x\in\Ar\,|\,\rho_\v(x)<\rho_q(x)\}$. We argue by contradiction and we assume that $U\neq\emptyset$. Note that $U$ is a smooth open set and that $\tr_{\p U}(\rho_\v{\rm e}^{\imath q\theta})=\tr_{\p U}(\rho_q{\rm e}^{\imath q\theta})$. 

\noindent By the minimality of $\rho_q{\rm e}^{\imath q\theta}$ we have 
\[
E_\infty(\rho_q{\rm e}^{\imath q\theta},U)\leq E_\infty(\rho_\v{\rm e}^{\imath q\theta},U).
\]

\noindent On the other hand, by the definition of $U$ and because $0\leq\rho_\v,\rho_q\leq1$ we have
\[
\int_U(1-\rho_q^2)^2<\int_U(1-\rho_\v^2)^2.
\]
Consequently 
\[
E_\v(\rho_q{\rm e}^{\imath q\theta},U)< E_\v(\rho_\v{\rm e}^{\imath q\theta},U)
\]
and this is in contradiction with the minimality of $\rho_\v{\rm e}^{\imath q\theta}$.
\end{proof}
From Lemma \ref{COmpRhov}, for $f\in H^1(]\r,1[,\C)$
\[
F_k(f)\geq\begin{cases}\displaystyle\int_\r^1\rho_q^2\left[r|f'|^2+\dfrac{k^2+2qk}{r}|f|^2\right]&\text{if }k^2+2qk>0\\\displaystyle\int_\r^1\rho_q^2r|f'|^2+\dfrac{k^2+2qk}{r}|f|^2&\text{if }k^2+2qk\leq0\end{cases}.
\]
We let
\[
\rmin=\min_{[\r,1]}\rho_q=\dfrac{2\r^{q/2}}{1+\r^q}.
\]
In order to get \eqref{NonExs6}, it suffices to replace the minimization problem $m_k$ [define in \eqref{MinProM}] by $\tilde{m}_k$ where:
\begin{itemize}
\item[$\bullet$]for $k\leq0\,\&\,k^2+2qk>0$   
\[
\tilde{m}_k=\rmin^2\inf \left.\left\{ \int_\r^1r|f'|^2+\dfrac{k^2+2qk}{r}|f|^2 \ \right| \ f\in H^1(]\r,1[,\C) \ \text{s.t.}f(1)=1\right \}
\]
%\tilde{m}_k=\rmin^2\inf_{\substack{f\in H^1(]\r,1[,\C)\\\text{s.t.}f(1)=1}}\int_\r^1\left[r|f'|^2+\dfrac{k^2+2qk}{r}|f|^2\right]
\item[$\bullet$]for $k\leq0\,\&\,k^2+2qk\leq0$   
\[
\tilde{m}_k=\rmin^2\inf \left.\left\{\int_\r^1r|f'|^2+\dfrac{k^2+2qk}{r\rmin^2}|f|^2 \ \right| \  f\in H^1(]\r,1[,\C) \ \text{s.t.}f(1)=1\right\}
\]
%\tilde{m}_k=\rmin^2\inf_{\substack{f\in H^1(]\r,1[,\C)\\\text{s.t.}f(1)=1}}\int_\r^1r|f'|^2+\dfrac{k^2+2qk}{r\rmin^2}|f|^2
\item[$\bullet$]for $k>0$, $\tilde{m}_k=\dfrac{1}{(1+\r^q)^2}\left[\tilde{m}_k^{(1)}+2\r^q\tilde{m}_k^{(2)}+\r^{2q}\tilde{m}_k^{(3)}\right] $ where
\[
\tilde{m}_k^{(1)}=\inf \left.\left\{\int_\r^1r^{2q+1}|f'|^2+r^{2q-1}{(k^2+2qk)}{}|f|^2 \ \right| \begin{array}{l} f\in H^1(]\r,1[,\C) \\ \text{ s.t.}f(1)=1\end{array}\right\},
\]
%\tilde{m}_k^{(1)}=\inf_{\substack{f\in H^1(]\r,1[,\C)\\\text{s.t.}f(1)=1}}\int_\r^1\left[r^{2q+1}|f'|^2+r^{2q-1}{(k^2+2qk)}{}|f|^2\right]
\[
\tilde{m}_k^{(2)}=\inf \left.\left\{\int_\r^1r|f'|^2+\dfrac{k^2+2qk}{r}|f|^2 \ \right| \begin{array}{l} f\in H^1(]\r,1[,\C) \\ \text{ s.t.}f(1)=1\end{array}\right\},
\]
%\tilde{m}_k^{(2)}=\inf_{\substack{f\in H^1(]\r,1[,\C)\\\text{s.t.}f(1)=1}}\int_\r^1\left[r|f'|^2+\dfrac{k^2+2qk}{r}|f|^2\right]
\[
\tilde{m}_k^{(3)}=\inf \left.\left\{\int_\r^1r^{-2q+1}|f'|^2+r^{-2q-1}{(k^2+2qk)}{}|f|^2 \ \right|\begin{array}{l} f\in H^1(]\r,1[,\C) \\ \text{ s.t.}f(1)=1\end{array}\right\}.
\]
%\tilde{m}_k^{(3)}=\inf_{\substack{f\in H^1(]\r,1[,\C)\\\text{s.t.}f(1)=1}}\int_\r^1\left[r^{-2q+1}|f'|^2+r^{-2q-1}{(k^2+2qk)}{}|f|^2\right]
\end{itemize}
We first study the cases $k\leq0$. According to the definition of $\tilde m_k$    
%\[
%\tilde{m}_k=\rmin^2\inf_{\substack{f\in H^1(]\r,1[,\C)\\\text{s.t.}f(1)=1}}\int_\r^1\left[r|f'|^2+\dfrac{k^2+2qk}{r}|f|^2\right].
%\]
we divide the presentation in two parts: $k^2+2qk>0$ and $k^2+2qk\leq0$.

It is clear that $k^2+2qk\leq0\Leftrightarrow k=-2q,...,0$.
We treat the case $k^2+2qk>0$\&$k\leq0$, {\it i.e.}, $k<-2q$. \\

\noindent{\bf Case I. $k<-2q$}\\

If $k<-2q$, it is obvious that 
\begin{equation}\label{Mk0Est}
\tilde{m}_k>0,
\end{equation}
and this estimate is sufficient for our argument.\\

\noindent{\bf Case II. $k=-2q,...,0$}\\

We now consider the case: $k=-2q,...,0$. We claim that $k^2+2qk\geq-q^2$. Therefore, by a Poincaré type inequality, there is $1>\r_q^{(2)}\geq \r_q^{(1)}$ s.t. for $\r_q^{(2)}<\r<1$ 
\[
\inf_{\substack{f\in H^1(]\r,1[,\C)\\\text{s.t.}f(1)=1}}\int_\r^1\left[r|f'|^2+\dfrac{k^2+2qk}{r\rmin^2}|f|^2\right]>-\infty.
\]
Therefore, by direct minimization, the {\it infimum} is reached. One can prove that the minimizer of $\tilde{m}_k$ is unique and, letting $\alpha:= \dfrac{k^2+2qk}{\rmin^2}$, it satisfies:
\[
\begin{cases}-(rf')'+\dfrac{\alpha}{r}f=0&\text{for }r\in]\r,1[
\\f(1)=1\&f'(\r)=0
\end{cases}.
\]
By solving the ordinary differential equation we get that 
\[
f_0(r)=A\cos(\sqrt{-\alpha}\ln r)+B\sin(\sqrt{-\alpha}\ln r).
\]
With the boundary conditions we obtain
\[
f_0(r)=\cos(\sqrt{-\alpha}\ln r)+\tan(\sqrt{-\alpha}\ln \r)\times\sin(\sqrt{-\alpha}\ln r).
\]
By using an integration by part we easily get that
\begin{eqnarray*}
\inf_{\substack{f\in H^1(]\r,1[,\C)\\\text{s.t.}f(1)=1}}\int_\r^1\left[r|f'|^2+\dfrac{k^2+2qk}{r\rmin^2}|f|^2\right]&=&f_0'(1)f_0(1)-f'_0(\r)f_0(\r)
\\&=&f'_0(1)=\sqrt{-\alpha}\tan(\sqrt{-\alpha}\ln \r).
\end{eqnarray*}
Thus, if $k=-2q,...,1$ then we have 
\begin{equation}\nonumber%\label{Mk1Est}
 \tilde{m}_k=\rmin{\sqrt{-k^2-2qk}}{}\times\tan\left[\frac{\sqrt{-k^2-2qk}}{\rmin}\times\ln \r\right].
\end{equation}
Consequently,  we have for $k=-2q,...,-1$ 
\begin{equation}\nonumber%\label{Mk1Est}
\begin{cases}\tilde m_k\geq (k^2+2qk)(1-\r)+\mathcal{O}[(1-\r)^2]\\\tilde m_0=0\end{cases}.
\end{equation}
Thus there is $1>\r_q^{(3)}\geq\r_q^{(2)}$ (depending on $q$) s.t. for $1>\r>\r_q^{(3)}$ we have for $k=-2q,...,-1$
\begin{equation}\label{Mk1Est}
\begin{cases}\tilde m_k\geq (k^2+2qk-10^{-6})(1-\r)\\\tilde m_0=0\end{cases}.%+\mathcal{O}[(1-\r)^2].
\end{equation}
{\bf Case III. $k>0$} \\

We now treat the last case: $k>0$. We study the minimization problems $\tilde{m}_k^{(l)}$ for $l=1,2,3$.

For $l=1,2,3$, we have [letting $\alpha= k^2+2qk$]
\[
\tilde{m}_k^{(l)}=\inf_{\substack{f\in H^1(]\r,1[,\C)\\\text{s.t.}f(1)=1}}\int_\r^1\left[r^{\beta_l+1}|f'|^2+r^{\beta_l-1}{\alpha}{}|f|^2\right]
\]
with 
\[
\beta_l=\begin{cases}2q&\text{if }l=1\\0&\text{if }l=2\\-2q&\text{if }l=3\end{cases}.
\]
By direct minimization, it is easy to see that $m_k^{(l)}$ admits a solution. Moreover a solution $f_l$ satisfies 
\[
\begin{cases}-(r^{\beta_l+1}f')'+{\alpha}r^{\beta_l-1}f=0&\text{for }r\in]\r,1[
\\f(1)=1\&f'(\r)=0
\end{cases}.
\]
From the ordinary differential equation we get that 
\[
f_l(r)=A_lr^{s_l}+B_lr^{t_l},\,A_l,B_l\in\C
\]
with 
\[
s _l=\frac{-\beta_l+\sqrt{\beta^2_l+4\alpha}}{2}\text{ and }t _l=\frac{-\beta_l-\sqrt{\beta^2_l+4\alpha}}{2}.
\]
Note that
\begin{equation}\label{ProdConj}
s _lt _l=-\alpha\text{ and }s_l-t_l=\sqrt{\beta^2_l+4\alpha}.
\end{equation}
For the simplicity of the presentation we drop the subscript $l$. 

\noindent From the boundary conditions we have
\[
\begin{cases}A+B=1\\As\r^s+Bt\r^t=0\end{cases}\Leftrightarrow\begin{cases}A=\dfrac{t\r^{t-s}}{t\r^{t-s}-s}\\B=\dfrac{s}{s-t\r^{t-s}}\end{cases}.
\]
As for the previous cases we have
\begin{eqnarray*}
\tilde{m}_k^{(l)}&=&f'_l(1)
\\&=&A_ls_l+B_lt_l
\\&=&\dfrac{s_lt_l\r^{t_l-s_l}}{t_l\r^{t_l-s_l}-s_l}+\dfrac{s_lt_l}{s_l-t_l\r^{t_l-s_l}}
\\&=&\dfrac{s_lt_l(1-\r^{t_l-s_l})}{s_l-t_l\r^{t_l-s_l}}
\\ {[\text{by \eqref{ProdConj}}]}&=&\dfrac{-\alpha(1-\r^{-\sqrt{\beta^2_l+4\alpha}})}{s_l-t_l\r^{-\sqrt{\beta^2_l+4\alpha}}}.
\end{eqnarray*}

In order to handle the expression of $\tilde{m}_k^{(l)}$, we note that for $\gamma\in\R$ we have $\r^\gamma=1-\gamma(1-\r)+\mathcal{O}[(1-\r)^2]$.

Therefore, for fixed $k\geq0$ we have [recall that $s_l-t_l=\sqrt{\beta^2_l+4\alpha}$]

\begin{eqnarray*}
\tilde m_k^{(l)}&=&\frac{-\alpha\left[1-\left(1+\sqrt{\beta^2_l+4\alpha}(1-\r)+\mathcal{O}[(1-\r)^2]\right)\right]}{s_l-t_l+t_l\sqrt{\beta^2_l+4\alpha}(1-\r)+\mathcal{O}[(1-\r)^2]}
\\
&=&\frac{\alpha\sqrt{\beta^2_l+4\alpha}(1-\r)+\mathcal{O}[(1-\r)^2]}{\sqrt{\beta^2_l+4\alpha}+t_l\sqrt{\beta^2_l+4\alpha}(1-\r)+\mathcal{O}[(1-\r)^2]}
\\
&=&\alpha(1-\r)+\mathcal{O}[(1-\r)^2].
\end{eqnarray*}
Consequently,  for $k\in\{1,...,2q\}$, we get
\[
\tilde m_k=(k^2+2qk)(1-\r)+\mathcal{O}[(1-\r)^2].
\]
Thus there is $1>\r_q^{(4)}\geq\r_q^{(3)}$ (depending on $q$) s.t. for $1>\r>\r_q^{(4)}$ and $k\in\{1,...,2q\}$ we have
\begin{equation}\label{Mk2Est}
%\begin{cases}
\tilde m_k\geq (k^2+2qk-10^{-6})(1-\r)%\\(k^2+2qk-10^{-6})(1-\r)<\dfrac{1}{3}\end{cases}.
\end{equation}
and 
\begin{equation}\label{Mk2EstBisa}
1-2q(1-R)>0.
\end{equation}
On the other hand, by noting  that $q^2+\alpha=(q+k)^2$ and that $q,k\geq0$, we have for fixed $R$ [when $k\to\infty$]
\begin{equation}\label{ExprTm1l}
\tilde{m}_k^{(1)}=\dfrac{(k^2+2qk)(1-\r^{2(q+k)})}{k\r^{2(q+k)}+2q+k}=(k+2q)(1+o_{k}(1)),
\end{equation}
\begin{equation}\label{ExprTm2l}
\tilde{m}_k^{(2)}=\dfrac{\sqrt{k^2+2qk}(1-\r^{2\sqrt{k^2+2qk}})}{1+\r^{2\sqrt{k^2+2qk}}}=(k+q)(1+o_k(1)),
\end{equation}
\begin{equation}\label{ExprTm3l}
\tilde{m}_k^{(3)}=\dfrac{(k^2+2qk)(1-\r^{2(q+k)})}{k+(2q+k)\r^{2(q+k)}}=(k+2q)(1+o_k(1)).
\end{equation}

From \eqref{ExprTm1l}, \eqref{ExprTm2l} and \eqref{ExprTm3l}, it is not difficult to prove that for $1>\r>\r_q^{(4)}$ there is $K_\r\geq2q+2$ (depending on $\r$ and $q$) s.t. for $k\geq K_\r$ we have that for $l=1,2,3$:
\begin{equation}\label{ExprTm4l}
\tilde{m}_k^{(l)}\geq k+\dfrac{1}{4}.%=\dfrac{(k^2+2qk)(1-\r^{2(q+k)})}{k+(2q+k)\r^{2(q+k)}}.
\end{equation}
Consequently from \eqref{ExprTm4l} we have for $k\geq K_\r$
\begin{eqnarray}\nonumber
\tilde{m}_k&=&\dfrac{1}{(1+\r^q)^2}\left[\tilde{m}_k^{(1)}+2\r^q\tilde{m}_k^{(2)}+\r^{2q}\tilde{m}_k^{(3)}\right]
%\\\nonumber&=&\dfrac{1}{(1+\r^q)^2}\left[\dfrac{(k^2+2qk)(1-\r^{2(q+k)})}{k\r^{2(q+k)}+2q+k}+2\r^q\dfrac{\sqrt{k^2+2qk}(1-\r^{2\sqrt{k^2+2qk}})}{1+\r^{2\sqrt{k^2+2qk}}}+\right.
\\\label{ExpreTilMk}&\geq&k+\dfrac{1}{4}.
\end{eqnarray}
And if $k\in\{2q+1,...,K_\r-1\}$ we just need
\begin{equation}\label{VeryEasyToGet}
\tilde{m}_k^{}>0.%=\dfrac{(k^2+2qk)(1-\r^{2(q+k)})}{k+(2q+k)\r^{2(q+k)}}.
\end{equation}
%For $1\leq k<K_\r$ we have
%\begin{equation}\label{ExprTm5l}
%\tilde{m}_k= (k^2+2qk)(1-\r)+\mathcal{O}[(1-\r)^2]
%\end{equation}
\subsection{Last computations and conclusion}
We are now in position to prove \eqref{NonExs6}.

On the one hand we have (with \eqref{DecSumTilL}, \eqref{Mk1Est} \eqref{Mk2Est}, \eqref{ExpreTilMk} and Proposition \ref{PropCoefV}.1)
\begin{eqnarray*}
&&\dfrac{\tilde{L}_\v(v_\v)}{\pi}-d\\&\geq&\sum_{k\in\Z}|a_k|^2(\tilde{m}_k-k)
\\&\geq&\sum_{k\leq-2q-1}|a_k|^2(\tilde{m}_k+|k|)+\sum_{k=-2q}^{-1}|a_k|^2\left[(k^2+2qk-10^{-6})(1-\r)+|k|\right]+\\
&&+\sum_{k=1}^{2q}|a_k|^2\left[(k^2+2qk-10^{-6})(1-\r)-k\right]+\sum_{k=2q+1}^{K_\r-1}|a_k|^2(\tilde{m}_k-k)+\\&&\phantom{tqfqfsfsqqqqqqqqqqqqqdddddddddddddfsfsfgshshjsjs}+\sum_{k\geq K_\r}\frac{|a_k|^2}{4}
\\&=&S_{1,2q}+S_{2q+1,K_\r-1}+S_{K_\r,\infty}.
\end{eqnarray*}
Where
\begin{eqnarray*}
S_{1,2q}&=&\sum_{k=1}^{2q}|a_k|^2\left[(k^2+2qk-10^{-6})(1-\r)-k\right]+\\&&\phantom{ahszhshhshshs}+|a_{-k}|^2\left[(k^2-2qk-10^{-6})(1-\r)+k\right],
\end{eqnarray*}
\begin{eqnarray*}
S_{2q+1,K_\r-1}&=&\sum_{k=2q+1}^{K_\r-1}k(|a_{-k}|^2-|a_k|^2)+|a_k|^2\tilde{m}_k+|a_{-k}|^2\tilde{m}_{-k},
\end{eqnarray*}
\begin{eqnarray*}
S_{K_\r,\infty}&=&\sum_{k\geq K_\r}\frac{|a_k|^2}{4}+|a_{-k}|^2(\tilde{m}_{-k}+k).
\end{eqnarray*}

From \eqref{Mk0Est} we have for $k\geq K_\r>2q$ that $\tilde{m}_{-k}>0$, then
\begin{equation}\label{FinalCCL1}
S_{K_\r,\infty}\geq\frac{1}{4}\sum_{k\geq K_\r}\{{|a_k|^2}+|a_{-k}|^2\}.
\end{equation}
By Proposition \ref{PropCoefV}.3, there are $C_1>0$ and $c_\v>0$ s.t. $c_\v\underset{\v\to\infty}{\to}1$ and for $k\in\{1,...,K_\r\}$ we have
\begin{eqnarray}\nonumber
|a_k|^2&\leq& c_\v^2|a_{-k}|^2+2c_\v|a_{-k}| C_1\sum_{l\in\Z^*}|a_l|^2+C_1^2\left(\sum_{l\in\Z^*}|a_l|^2\right)^2
\\\nonumber\text{[Proposition \ref{PropCoefV}.2]}&\leq&c_\v^2|a_{-k}|^2+o\left(\sum_{l\in\Z^*}|a_l|^2\right).
\end{eqnarray}
Consequently, for $k\in\{1,...,K_\r\}$ we have
\begin{eqnarray}\nonumber
|a_{-k}|^2-|a_{k}|^2&\geq&|a_{-k}|^2(1-c_\v^2)+o\left(\sum_{l\in\Z^*}|a_l|^2\right)
\\\label{FundComparBisrepet}\text{[$c_\v\to1\,\&\,$Proposition \ref{PropCoefV}.2]}&=&o\left(\sum_{l\in\Z^*}|a_l|^2\right)\text{ when }\v\to\infty.
\end{eqnarray}
We thus get%Note that for $k\in\{1,...,2q\}$ we have $k^2-2qk-10^{-6}\leq0$, thus
\begin{eqnarray}\nonumber
S_{1,2q}&=&\sum_{k=1}^{2q}\left\{|a_k|^2\left[(k^2+2qk-10^{-6})(1-\r)-k\right]\right.\\\nonumber&&\phantom{aaaaaaa}\left.+|a_{-k}|^2\left[(k^2-2qk-10^{-6})(1-\r)+k\right]\right\}
\\\nonumber&=&(1-R)\sum_{k=1}^{2q}(|a_k|^2+|a_{-k}|^2)(k^2-10^{-6})+\\\nonumber&&\phantom{aaaaccccc}+[1-2q(1-R)]\sum_{k=1}^{2q}\{|a_{-k}|^2-|a_{k}|^2\}
\\\stackrel{[\eqref{Mk2Est}, \eqref{Mk2EstBisa}\&\eqref{FundComparBisrepet}]}%\geq &&|a_{-k}|^2\left[(k^2-2qk-10^{-6})(1-\r)+c_\v(k^2+2qk-10^{-6})(1-\r)+k(1-c_\v)\right]+
%\\&&+\left[2c_\v|a_{-k}| C_1\sum_{l\in\Z^*}|a_l|^2+C_1^2\left(\sum_{l\in\Z^*}|a_l|^2\right)^2\right]\left[(k^2+2qk-10^{-6})(1-\r)-k\right]
%\\\text{[Proposition \ref{PropCoefV}.2]}
{\geq}&&(1-R)\sum_{k=1}^{2q}(|a_k|^2+|a_{-k}|^2)(k^2-10^{-6})+o\left(\sum_{l\in\Z^*}|a_l|^2\right).\label{FinalCCL2} %\text{ when }\v\to\infty.
\end{eqnarray}
Clearly, from \eqref{Mk1Est}$\&$\eqref{VeryEasyToGet}, there is $\frac{1}{4}>\eta>0$ (independent of $\v$) s.t. we have 
\[
\begin{cases}\text{$\tilde{m}_k,\tilde{m}_{-k}>\eta$ for $k\in\{2q+1,...,K_\r-1\}$}\\(1-10^{-6})(1-R)>\eta\end{cases}
\]
and consequently (with \eqref{FundComparBisrepet})
\begin{equation}\label{FinalCCL3}
S_{2q+1,K_\r-1}\geq\eta\sum_{k=2q+1}^{K_\r-1}\{|a_k|^2+|a_{-k}|^2\}+o\left(\sum_{l\in\Z^*}|a_l|^2\right)\text{ when }\v\to\infty.
\end{equation}
Therefore, by combining \eqref{FinalCCL1}, \eqref{FinalCCL2} and \eqref{FinalCCL3} we have 
\begin{eqnarray*}
\dfrac{\tilde{L}_\v(v_\v)}{\pi}-d&\geq&S_{1,2q}+S_{2q+1,K_\r-1}+S_{K_\r,\infty}
%\sum_{k=1}^{2q}|a_{-k}|^2k^2+\sum_{k=2q+1}^{K_\r-1}|a_k|^2\tilde{m}_k+|a_{-k}|^2\tilde{m}_{-k}+\sum_{k\geq K_\r-1}\frac{|a_k|^2}{4}+|a_{-k}|^2(\tilde{m}_{-k}+k)+o\left(\sum_{l\in\Z^*}|a_l|^2\right)
\\&\geq&\eta\sum_{l\in\Z^*}|a_l|^2+o\left(\sum_{l\in\Z^*}|a_l|^2\right)
\\&>&0\text{ for sufficiently large $\v$.}
\end{eqnarray*}
This last result ends the proof of Theorem \ref{ThmNonExist}.

\section{Comments and perspectives}

In order to prove our results we have made several restrictions on the parameter $\e$, on the capacity of the domain and on the form of the domain (for Theorem \ref{ThmNonExist}). We want to discuss here why these restrictions appear and their necessity. \\

In Theorem \ref{PropEXist1} we assumed that the annular domain is "thin" (with large capacity) and that $\e$ is large. In view of Theorem \ref{nonexistMir} of Mironescu (see \cite{Size}) we know that if the annular domain is "thick" and if $\e$ is small then minimizers of $m_\e(p,p)$ do not exist (for $p\in \mathbb{N}^*$). However it is an open question to know if minimizers do exist for $\e$ large when the annular domain has small capacity for $p>1$. This is indeed the case for $p=1$, but for $p>1$ even for the Dirichlet energy $E_\infty$ this is not known. \\

In Theorem \ref{ThmNonExist} we also assumed that the annulus is "thin". The main reason for that is the following: in order to prove non existence of minimizers of $E_\e$ we want to show that for every $v\in \I_{p,q}$ 
$$E_\e(v) > m_\e(q,q)+ \pi(p-q)$$ if $p>q$.
However it is easier to compute the difference $E_\e(v)-m_\e(q,q)$ if the infimum $m_\e(q,q)$ is attained, since we can then use a decomposition Lemma (see \eqref{NonExs5}). For example when $m_\e(1,1)$ is not attained we know that $m_\e(1,1)=2\pi$ thanks to the Price Lemma \ref{PriceLemma}. Thus in order to prove non existence of minimizers in $\I_{p,1}$ for $p>1$ one could try to show that 
$$E_\e(v) >2\pi +\pi(p-1)$$ for all $v \in \I_{p,1}$. 

Other technical reasons appear in the process of the proof of Theorem \ref{ThmNonExist}. In  \cite{Misiats1} the author was able to get rid of the  technical restrictions on the size of the domain. Its argument does not apply in our case, this is mainly due to the fact that $|u_\e|$ does not converge to $1$ (or to a constant) when $\e \rightarrow +\infty$. 
The restriction on the shape of the domain in \ref{ThmNonExist} also comes from the fact that $|u_\e|$ does not converge to a constant as $\e \rightarrow +\infty$. More precisely we used in a crucial way that $\rho_\e^q > \rho_q=|u_\infty^q|$ in the proof of the Theorem. We also used that $\rho_\e^q$ only depends on $r$ in order to use a decomposition in Fourier series. We did not obtain analogous results in the case of a general annular domain. However we believe that Theorem \ref{ThmNonExist} holds for all annular type domain regardless of the shape or of the size.
\\

\noindent{\bf Acknowledgements.} The authors would like to thank Y. Ge, P. Mironescu and E. Sandier for fruitful discussions concerning this work.

\bibliographystyle{amsalpha}
\bibliography{Bibliosemistiff}
\end{document}